\renewcommand*{\backref}[1]{\ifx#1\relax \else Page #1 \fi}
\renewcommand*{\backrefalt}[4]{%
 \ifcase #1 \footnotesize{(Not cited.)}%
  \or        \footnotesize{(Cited on page~#2.)}%
  \else      \footnotesize{(Cited on pages~#2.)}%
  \fi
}
\newtheorem{definition}{Definition}
\DeclareMathOperator*{\argmin}{argmin}
\newcommand{\abs}[1]{\left\lvert #1 \right\rvert}
\DeclareMathOperator*{\iid}{\texttt{iid}}
\newcommand{\floor}[1]{\lfloor #1 \rfloor}
\newcommand{\pro}{{\mathbb P}}
\newcommand{\expec}[2]{\mathbb{E}_{#2}\left[ #1 \right] }
\newcommand\numberthis{\addtocounter{equation}{1}\tag{\theequation}}  
\def\fn[#1]#2{{f_{#1}\left(x_{#2}\right)}}
\newcommand\given[1][]{\:#1\vert\:}
\newtheorem{lemma}{Lemma}[section]
\newtheorem{theorem}{Theorem}[section]
\newtheorem{corollary}{Corollary}[section]
\newtheorem{proposition}{Proposition}[section]
\newtheorem{assumption}{Assumption}[section]
\newtheorem{remark}{Remark}
\def\E{{\mathbb{E}}}
\def\exp{{\rm exp}}
\def\cD{{\cal D}}
\def\cH{{\cal H}}
\def\cF{{\cal F}}
\def\dgp{DGP\xspace}
\newenvironment{talign*}
 {\csname align*\endcsname}
 {\endalign}
\newcommand{\murat}[1]{{\color{magenta}#1}}
\title{On Empirical Risk Minimization with Dependent \\ and Heavy-Tailed Data}
\author{Abhishek Roy\thanks{Department of Statistics, University of California, Davis.\texttt{abroy@ucdavis.edu}. Research of this author was supported in part by NSF TRIPODS grant  CCF-1934568} 
\and Krishnakumar Balasubramanian\thanks{Department of Statistics, University of California, Davis. \texttt{kbala@ucdavis.edu}. Research of this author was supported in part by UC Davis CeDAR (Center for Data Science and Artificial Intelligence Research) Innovative Data Science Seed Funding Program.}
\and Murat A. Erdogdu\thanks{Department of Computer Science and Department of Statistical Sciences at
   the University of Toronto, and Vector Institute. \texttt{erdogdu@cs.toronto.edu}.  Research of this author was supported in part by NSERC Grant [2019-06167], Connaught New Researcher Award, CIFAR AI Chairs program, and CIFAR AI Catalyst grant}
}
\begin{document}

\maketitle

\begin{abstract}
In this work, we establish risk bounds for the Empirical Risk Minimization (ERM) with both dependent and heavy-tailed data-generating processes. We do so by extending the seminal works~\cite{pmlr-v35-mendelson14, mendelson2018learning} on the analysis of ERM with heavy-tailed but independent and identically distributed observations, to the strictly stationary exponentially $\beta$-mixing case. Our analysis is based on  explicitly controlling the multiplier process arising from the interaction between the noise and the function evaluations on inputs. It allows for the interaction to be even polynomially heavy-tailed, which covers a significantly large class of heavy-tailed models beyond what is analyzed in the learning theory literature. We illustrate our results by deriving rates of convergence for the high-dimensional linear regression problem with dependent and heavy-tailed data.
\end{abstract}


\section{Introduction}\label{sec:intro}
Given a random vector $(X,Y) \in \mathbb{R}^d \times \mathbb{R}$, with joint distribution $(X,Y)\sim\pi$, and a class of closed, convex set of functions $\cF\subset L_2(\pi)$, the objective in statistical learning theory is to find the \emph{best} function in the set $\cF$ that maps the input $X$ to the target $Y$. The quality of this mapping is measured by a user-defined loss function $\ell:\mathbb{R}\to \mathbb{R}^+\cup \{0\}$. The most well-studied approach for the above task is that of \emph{risk minimization}, where the best function is defined as the one that minimizes the expected loss over the set $\cF$: 
\begin{align*}
	f^*=\argmin_{f\in\cF}P\ell_f\coloneqq\argmin_{f\in\cF}\expec{\ell\left(f(X)-Y\right)}{\pi}.
\end{align*}
The above problem requires the knowledge of the distribution $\pi$ which is typically unknown in practice. However, we are usually given observations $Z_i=(X_i,Y_i) $ for $i=1,\dots, N$, from the distribution $\pi$ which leads to the \emph{Empirical Risk Minimization} (ERM) procedure defined as
\begin{align*}
	\hat{f}=\argmin_{f\in\cF}P_N\ell_f\coloneqq\argmin_{f\in\cF}\frac{1}{N}\sum_{i=1}^N\ell\left(f(X_i)-Y_i\right).
\end{align*}
The convergence of the empirical risk minimizer $\hat{f}$ to the true risk minimizer $f^*$ is typically analyzed by considering the underlying empirical process, a topic which dates back to the seminal work of~\cite{vapnik1971uniform}; see also~\cite{van1996weak, geer2000empirical, bartlett2005local, koltchinskii2006local, koltchinskii2011oracle}. In a representative analysis in this setting, a majority of the works assume the observations $Z_i$ are generated independent and identically distributed ($\iid$) from $\pi$, and the analysis is based on uniform concentration. 
However, there are important limitations associated with this approach, particularly due to the (Talagrand's) contraction principle which naturally requires a Lipschitz loss function (see, for example,~\cite[Corollary 3.17]{ledoux2013probability} or~\cite[Theorem 2.3]{koltchinskii2011oracle}). 
As a result, in order to work with standard (unbounded) loss functions such as squared-error loss or Huber loss, it is generally assumed that the range of $f\in \cF$ is uniformly bounded and/or the noise $\xi \coloneqq Y-f(X)$ is also uniformly bounded $\pi$-almost surely.

Several attempts have been made in the literature to overcome the limitations of the standard ERM analysis. A significant progress was made by Mendelson~\cite{pmlr-v35-mendelson14, mendelson2018learning}, who proposed the so-called \emph{learning without concentration} framework for analyzing ERM procedures with unbounded noise or loss functions. The approach is based on a combination of small-ball type assumption on the input samples $X_i$, along with developing multiplier empirical process inequalities under weaker moment assumptions. We refer the interested reader, for example, to~\cite{mendelson2017multiplier,mendelson2017local,lecue2018regularization,liang2015learning,grunwald2020fast} for details. The aforementioned works, while relaxing the prior analysis of ERM to handle heavy-tailed data-generating process (\dgp), still require the more stringent $\iid$ assumption for their analysis. This restricts the practical applicability of the developed theoretical results significantly. Indeed, heavy-tailed and dependent data appear naturally in various practical learning scenarios~\cite{boente1989robust, jiang2001robust, dundar2007learning};
however, theoretical guarantees are still missing.  

\textbf{Our Contributions:} Aiming to fill the above gap, we analyze ERM with convex loss functions (that are locally strongly-convex around the origin) when the \dgp is both heavy-tailed and non-$\iid$. We do so by extending the small-ball technique of~\cite{pmlr-v35-mendelson14, mendelson2018learning} to the strictly stationary exponentially $\beta$-mixing data. In the $\iid$ case, the interaction between the noise and the inputs is handled by an analysis based on multiplier empirical process. However, developing similar techniques in the non-$\iid$ case is fundamentally restrictive due to the limitations of the analysis based on empirical process. We side-step this issue for the non-$\iid$ case by directly making assumptions on the interaction, which allows for it to be either exponentially or polynomially heavy-tailed. For the exponentially heavy-tailed interactions, we leverage the concentration inequalities developed by~\cite{merlevede2011bernstein}. For the polynomially heavy-tailed case, we develop new concentration inequalities extending the recent work~\cite{bakhshizadeh2020sharp} to $\beta$-mixing random variables. We illustrate our results in the context of ERM with sparse linear function class and stationary $\beta$-mixing \dgp under both squared and Huber loss. 

\textbf{Motivation:} A natural question arises in this context: \emph{Why study ERM with convex loss functions when the \dgp is heavy-tailed?} Firstly, convex loss functions cover a large class of robust loss function that are tailored to deal with the heavy-tailed behavior present in the noise and/or input data. Some examples include the Huber loss~\cite{huber1992robust}, conditional value-at-risk~\cite{rockafellar2002conditional,ruszczynski2006optimization, mhammedi2020pac, soma2020statistical} and the so-called spectral risk measures~\cite{acerbi2002spectral, holland2021spectral}. While there exist studies for nonconvex loss functions suited for heavy-tailed input data (for example,~\cite{loh2017statistical}), such analyses are mostly in a model-based setting and focus on estimation error. Secondly, while alternatives to ERM have also been proposed and analyzed in the literature for the $\iid$ case (with the most prominent one being the median-of-means framework and its variants~\cite{minsker2019excess, lugosi2019regularization, lecue2020robust,bartl2021monte}), it is not immediately clear how to extend such methods to the dependent \dgp that we consider in this paper. We view our work as taking the first step in developing risk bounds for statistical learning when the \dgp is both heavy-tailed and dependent.

\textbf{Related Works:} 
The seminal work~\cite{yu1994rates} extended the analysis based on empirical process to the stationary mixing process using a blocking technique. \cite{irle1997consistency} and~\cite{berti1997glivenko} studied consistency of non-parametric regression methods under mixing and exchangeability conditions on the \dgp, respectively.~\cite{nobel1999limits} established lower bounds to achieving consistency when learning from dependent data.~\cite{steinwart2009learning} studied consistency of ERM with $Z_i$ being an $\alpha$-mixing (not necessarily stationary) process, when $\mathcal{F}$ is a reproducing kernel Hilbert space. More recently,~\cite{hanneke2020learning} and~\cite{dawid2020learnability} studied learnability under a general stochastic process setup. The works~\cite{aldous1990markovian,barve1997complexity,pestov2010predictive,gamarnik2003extension} extend Valiant's Probably Approximately Correct (PAC) learning model to Markovian and related drifting \dgp, assuming bounded loss function and/or noise to obtain rates of convergence. 
Furthermore, \cite{zhang2012learning} and~\cite{hang2014fast} analyzed ERM for least-squares regression (with bounded noise) with clipped loss functions and an $\alpha$-mixing \dgp. Rademacher complexity results for predominantly stationary dependent processes were developed by~\cite{mcdonald2011rademacher} and~\cite{mohri2008rademacher}.~\cite{ralaivola2010chromatic} and~\cite{alquier2013prediction} developed PAC-Bayes bounds in the non-$\iid$ setting.~\cite{rakhlin2012empirical, rakhlin2014online} developed notions of sequential Rademacher complexity to characterize the complexity in online nonparametric learning in the worst-case. More recently~\cite{dagan2019learning, kandiros2021statistical} considered learning under weakly-dependent data for specific models. However, such works mainly rely on bounded loss functions in their analysis. Furthermore, \cite{meir2000nonparametric} and~\cite{alquier2012model} studied model selection for time series forecasting in a possibly unbounded setup. However their work does not consider conditional prediction and is limited to light-tailed cases.

Apart from the aforementioned works, the recent works~\cite{kuznetsov2017generalization,han2019convergence,wong2020} are closely related to our setup as they consider rates of convergence of ERM under heavy-tailed and dependent \dgp. In \cite[Section 8]{kuznetsov2017generalization}, generalization bounds are developed when $Z_i$ is an asymptotically stationary $\beta$-mixing sequence. However, their conditions on the function class $\mathcal{F}$ are rather opaque and it is not clear if their method actually handles the heavy-tailed \dgp that we focus on. \cite{han2019convergence} considered a setup based on a statistical model: for $i=1,\ldots, N$, $Y_i=f^*(X_i) + \epsilon_i$, with the following conditions: (i) $\epsilon_i$ being independent of $X_i$, (ii) $X_i$ being independent of each other, and (iii) $\epsilon_i$ being arbitrarily dependent. For this setting,  they assumed that the noise has a bounded $p$-th moment (with $p\geq 1$) and $\mathcal{F}$ satisfies the standard entropy condition (see, for example~\cite[Example 4]{koltchinskii2006local}) with exponent $\alpha \in (0,2)$ and obtained convergence rates of the order $O(N^{-\frac{1}{2+\alpha}}+N^{-\frac{1}{2} + \frac{1}{2p}})$. Finally,~\cite{wong2020} provides an analysis of $L_1$-regularized ERM with quadratic loss and linear function class where the heavy-tailed behavior 
is induced by
a sub-Weibull assumption, and data dependency is characterized by a stationary $\beta$-mixing condition. However, their analysis specializes to sparse linear function classes and their focus is on parameter estimation error and in-sample prediction accuracy. 


\vspace{-.05in}
\section{Assumptions and Preliminaries}
\vspace{-.05in}
We assume that there exists a function $f^* \in \cF$ that minimizes the population risk $\E[\ell(f(X) - Y)]$. In what follows, we provide the conditions that we require on the \dgp, specifically, the exponentially $\beta$-mixing condition, for characterizing the dependency among data points.
%
\begin{definition}[\cite{yu1994rates}]
	Suppose that $\{Z_i\}_{i=-\infty}^\infty$ is a strictly stationary sequence of random variables. For any $i,j\in\mathbb{Z}\cup\{-\infty,\infty\}$, let $\sigma_i^j$ denote the $\sigma$-algebra generated by $\{Z_b\}_{b=i}^j$. Then for any positive integer $b$, the $\beta$-mixing coefficient of the stochastic process $\{Z_i\}_{i=-\infty}^\infty$ is defined as $$\beta(b)=\sup_n\underset{B\in\sigma^n_{-\infty}}{\mathbb{E}}\big[\sup_{A\in\sigma_{n+b}^\infty}\abs{\mathbb{P}(A\given B)-\mathbb{P}(A)}\big].$$
The sequence $\{Z_i\}_{i=-\infty}^\infty$ is said to be $\beta$-mixing if $\beta(b)\to 0$ as $b\to \infty$. Furthermore, it is said to be exponentially $\beta$-mixing if there exist $\beta_0,\beta_1,r>0$ such that  $\beta(b) \leq \beta_0 \exp(-\beta_1b^r)$ for all $b$.
\end{definition}
The $\beta$-mixing condition is frequently used when studying non-$\iid$ \dgp, and imposes a dependence structure between data samples that weakens over time. The coefficient $\beta(b)$ is a measure of the dependence between events that occur within $b$ units in time. Indeed, $\beta$-mixing
is often used in the analysis of non-$\iid$ data in statistics and machine learning,~\cite{vidyasagar2013learning}. Before stating our assumptions formally, we present the following decomposition of empirical risk for a convex loss using Taylor's expansion:
\vspace{-.05in}
\begin{align}\label{eq:decomp}
	P_N\ell_f\geq\frac{1}{16N}\sum_{i=1}^N \ell''(\widetilde{\xi}_i)(f-f^*)^2(X_i)+\frac{1}{N}\sum_{i=1}^N \ell'(\xi_i)(f-f^*)(X_i),
	\vspace{-.05in}
\end{align}
where $\widetilde{\xi}_i$ is a suitably chosen midpoint between $f(X_i)-Y_i$ and $f^*(X_i)-Y_i\coloneqq \xi_i$. For quadratic loss functions $\ell(t)=t^2$, $\ell'(\xi_i)=2\xi_i$ and $\ell''(\widetilde{\xi}_i)=2$, $\forall i$. At a high level, establishing risk bounds boils down to proving a positive lower bound on the second term on the Right Hand Side (RHS) of \eqref{eq:decomp}, and a concentration result for the first term on the RHS with high probability.  We now introduce the precise assumptions we make on the \dgp and the function class to formalize the above strategy. For the sake of clearer exposition, we first introduce our assumptions in the context of quadratic loss and then indicate the changes required to handle more general locally strongly-convex loss functions. 
\begin{assumption}[Squared loss]\label{as:datagenprocess}
The \dgp $\{Z_i\}_{i=-\infty}^\infty$ and the function class $\mathcal{F}$ satisfy the following: 
\begin{enumerate}[leftmargin=.25in]
\item [(a)] \textbf{$\beta$-mixing data.} The process $\{Z_i\}_{i=-\infty}^\infty$ is a strictly stationary exponentially $\beta$-mixing sequence, i.e., $\beta(k)\leq \exp(-ck^{\eta_1})$, for some $c,\eta_1>0$, with strict stationary distribution $\pi$.
\item [(b)] \textbf{Small ball condition.} Let $\cF\subset L_2(\pi)$ be closed, convex class of functions and define $\cF-\cF \coloneqq \{f-h:f,h\in\cF\}$. Then, the  function class $\mathcal{F}$ is such that there is a $\tau>0$ for which $Q_{\cF-\cF}(2\tau)>0$, where $Q_{\cH}(u)=\inf_{h\in\cH}\pro\left(|h|\geq u\|h\|_{L_2}\right).$
\item [(c)] \textbf{Deviations of interaction.} The stationary noise $\xi_1$ and the error $(f-f^*)(X_1)$ satisfy either:
\begin{enumerate}
    \item [(i)] For all $f\in \cF$, and $Z_1\sim\pi$, for some $\eta_2>0$ we have 
	\begin{align}\label{eq:exptail}
		\pro\left(|\xi_1(f-f^*)(X_1)-\expec{\xi_1(f-f^*)(X_1)}{}| \ge t\right)\leq \exp(1-t^{\eta_2}).
	\end{align}
	or,
	\item [(ii)] For all $f\in \cF$, and $Z_1\sim\pi$, for some $\eta_2>2$, and $c>0$ we have 
	\begin{align}\label{eq:polytail}
		\pro\left(|\xi_1(f-f^*)(X_1)-\expec{\xi_1(f-f^*)(X_1)}{}| \ge t\right)\leq ct^{-\eta_2}.
	\end{align}
\end{enumerate}
\item [(d)] \textbf{Heavy-tail/data-mixing trade-off.} 
Under condition (c)-(i), $1/\eta\coloneqq 1/\eta_1+1/\eta_2 > 1$.
\end{enumerate}
\end{assumption}

Condition (a) above, exponentially $\beta$-mixing data, has been assumed in various works, see for example~\cite{vidyasagar2013learning,wong2020,kuznetsov2017generalization}, to obtain rates of convergence for ERM procedures in general. Indeed, (exponential) mixing assumption holds in several time-series applications. For example,~\cite{mokkadem1988mixing} showed that certain ARMA processes can be modeled as an exponentially $\beta$-mixing stochastic process. Furthermore~\cite{vidyasagar2006learning} showed that globally exponentially stable unforced dynamical systems subjected to finite-variance continuous density input noise give rise to exponentially  mixing stochastic process; see also~\cite{farahmand2012regularized}. Condition (b), referred to as the well-known small-ball condition, has been previously employed in the $\iid$ case~\cite{pmlr-v35-mendelson14}. Intuitively, it models \emph{heavy-tailedness} by restricting the mass allowed near any small neighborhoods of zero; thus, forcing the tails to be necessarily heavy. To our knowledge, the small-ball condition has not been used under dependent \dgp assumptions. Condition (c) is proposed in this work as a way to model the interaction between the stationary noise $\xi_1$ and the stationary error $(f-f^*)(X_1)$. For the $\iid$ setting,~\cite{pmlr-v35-mendelson14} modeled the interaction between $\xi_1$ and $(f-f^*)(X_1)$ uniformly over the class of $\mathcal{F}$ via the multiplier empirical process and captured the complexity through a parameter $\alpha_N$ (see~\eqref{eq:alphadef} for the definition). This requires using symmetrization argument in the proof which is not applicable in the non-$\iid$ setting that we consider in this work. In addition, how different tail conditions on the data and noise affect the high-probability statement on the learning rate is not apparent from the parameter $\alpha_N$. We revisit the relationship between our condition (in the context of $\iid$ observations) and the multiplier empirical process approach used in~\cite{pmlr-v35-mendelson14} in Section~\ref{sec:reltoempiricalprocess}. Finally, condition (d) models the relationship between the allowed degree of dependency and the allowed degree of interaction between $\xi_1$ and $(f-f^*)(X_1)$. 

Next, we modify condition (c) in Assumption~\ref{as:datagenprocess} for the case of general locally strongly-convex loss functions because now the interaction part involves $\ell'(\xi)$ instead of $\xi$ (recall the decomposition \eqref{eq:decomp}). 
\begin{assumption}[Convex loss]\label{as:datagenprocesscon}
When the loss function is locally strongly-convex around the origin and globally convex, condition (c) in Assumption~\ref{as:datagenprocess} is modified as:
\begin{enumerate}[leftmargin=.25in]
\vspace{-.05in}
\item [(c)] \textbf{Deviations of interaction.} The stationary noise $\xi_1$ and the error $(f-f^*)(X_1)$ satisfy either,
\vspace{-.05in}
\begin{enumerate}
    \item[(i)] For all $f\in \cF$, and $Z_1\sim\pi$, for some $\eta_2>0$ we have 
	\begin{align}
		\pro\left(|\ell'(\xi_1)(f-f^*)(X_1)-\expec{\ell'(\xi_1)(f-f^*)(X_1)}{}| \ge t\right)\leq \exp(1-t^{\eta_2}).
	\end{align}
	or,
	\item[(ii)] For all $f\in \cF$, and $Z_1\sim\pi$, for some $\eta_2>2$, and $c>0$ we have 
	\begin{align}
		\pro\left(|\ell'(\xi_1)(f-f^*)(X_1)-\expec{\ell'(\xi_1)(f-f^*)(X_1)}{}| \ge t\right)\leq ct^{-\eta_2}.
	\end{align}
\end{enumerate}
\end{enumerate}
\end{assumption}

%

\textbf{Complexity Measures:} We now introduce the complexity measures that play a crucial role in characterizing the rates of convergence. The use of $\beta$-mixing assumption enables us to define complexity measures based on the blocking technique proposed by~\cite{yu1994rates}, also utilized by the works of~\cite{mohri2008rademacher, kuznetsov2017generalization, wong2020}. We partition the training sample of size $N$, $S\coloneqq\{Z_i\}_{i=1}^N$, into two sequences of blocks $S_a$ and $S_b$. Each block in $S_a$, and $S_b$ is of length $a$, and $b$ respectively. $S_a$ and $S_b$, both are of length $\mu$, i.e., $\mu(a+b)=N$. Formally, $S_a$ and $S_b$ are given by
\begin{talign*}\textstyle
	&S_a=\left(Z_1^{(a)},Z_2^{(a)},\cdots,Z_{\mu}^{(a)}\right) \quad \text{with  } Z_i^{(a)}=\{z_{(i-1)(a+b)+1},\cdots,z_{(i-1)(a+b)+a}\},\\
	&S_b=\left(Z_1^{(b)},Z_2^{(b)},\cdots,Z_{\mu}^{(b)}\right) \quad \text{~with  } Z_i^{(b)}=\{z_{(i-1)(a+b)+a+1},\cdots,z_{(i-1)(a+b)+a+b}\}. \numberthis\label{eq:partition}
\end{talign*}  
Based on this blocking technique, we require the following definition of Rademacher complexity.
\vspace{-.05in}
\begin{definition}\label{def:omegadef}
Let $\{\widetilde{X}_i\}_{i=1}^\mu$ be an $\iid$ sample from the strict stationary distribution $\pi$. Let $\mathcal{D}$ be the unit-$L_2(\pi)$ ball centered at $f^*$.
%
For every $\gamma>0$, define
\vspace{-.02in}
	\begin{align}\label{eq:localrad}
		\omega_\mu(\mathcal{F}-\mathcal{F},\gamma)\coloneqq\inf\left\lbrace r>0:\expec{\sup_{h\in(\mathcal{F}-\mathcal{F})~\cap~r\mathcal{D}}\abs{\frac{1}{\mu}\sum_{i=1}^\mu\epsilon_ih(\widetilde{X}_i)}}{}\leq \gamma r\right\rbrace,
		\vspace{-.02in}
	\end{align}
	where $\{\epsilon_i\}_{i=1}^\mu$ are $\iid$ Rademacher variables taking values $\pm 1$ with probability $1/2$.
\end{definition}
 The quantity $\omega_\mu(\cH,\gamma)$ provides a localized complexity measure for the function class $\mathcal{F}$, and serves as a generalization of the standard Rademacher complexity in the non-$\iid$ setting. For the case of locally strongly-convex losses, we need the following related measures of complexity. 

\begin{definition}\label{def:omegadefcon}
For a function class $\cH\!\subset\! L_2(\pi)$, a sample of size $N$ from a strictly stationary $\beta$-mixing sequence with stationary distribution $\pi$ satisfying Assumption~\ref{as:datagenprocess}-(a), and $\zeta_1, \zeta_2>0$, we define:
\begin{align*}
    \omega_{1}(\cH,N,\zeta_1)&=\inf\left\lbrace r>0:\expec{\|G\|_{\cH\cap rD}}{}\leq \zeta_1rN^{\frac{\eta_1}{2(1+\eta_1)}}\right\rbrace ~~\text{and}~~
    \omega_{2}(\cH,\mu,\zeta_2)=\omega_\mu(\mathcal{H},\zeta_2),
\end{align*}
where $\mu=N^\frac{\eta_1}{(1+\eta_1)}$, $\|G\|_{\cH}=\sup_{h\in \cH}G_h$, and $\{G_h:h\in\cH\}$ is the canonical Gaussian process indexed by $\cH$ with a covariance induced by $L_2(\pi)$. Moreover, we let $$\omega_Q(\cF-\cF,N,\zeta_1,\zeta_2)\coloneqq\max(\omega_{1}(\cH,N,\zeta_1),\omega_{2}(\cH,\mu,\zeta_2)).$$ 
\end{definition}
Note that in Definitions~\ref{def:omegadef} and \ref{def:omegadefcon}, the scaling is in terms of number of blocks $\mu$ instead of $N$. The number of blocks $\mu$ can be thought of as the effective sample size under dependency, and as $\eta_1\to\infty$, one has $\mu\to N$. The term $\expec{\|G\|_{\cH\cap rD}}{}$ appearing in Definition~\ref{def:omegadefcon} is termed as the localized Gaussian width and is also a widely used complexity measure in the literature. Note that while in the case of quadratic loss function, the bound is in terms of local Rademacher-based complexity measure, whereas in the convex case, we require both Gaussian and Rademacher-based complexity measures to establish the bound, mostly due to technical reasons in the proof. An intuitive explanation for this has eluded us thus far; see also~\cite[Lemma 4.2]{mendelson2018learning}. 

\textbf{Concentration Inequalities for Heavy-tails:} We now restate \cite[ Theorem 1]{merlevede2011bernstein}, in a form adapted to our setting below. This result is required to handle interactions of noise and input that satisfy condition (c)-(i) of Assumption~\ref{as:datagenprocess}. It is straightforward to check that the conditions required by~\cite{merlevede2011bernstein} are immediately satisfied under our Assumption~\ref{as:datagenprocess}. Indeed, while the results in~\cite{merlevede2011bernstein} are stated for $\tau$-mixing sequences, condition (a) in Assumption~\ref{as:datagenprocess} implies that the process $\{Z_i\}_{i=-\infty}^\infty$ is exponentially $\tau$-mixing \cite{chwialkowski2016kernel}, i.e., for a constant $c'>0$, $\tau(k)\leq e^{-c'k^{\eta_1}}$.
\begin{lemma}[\cite{merlevede2011bernstein}]\label{lm:rio}
	Let $\{W_j\}_{j \geq 1}$ be a sequence of zero-mean real-valued random variables satisfying conditions (a), (c)-(i), and (d) of Assumption~\ref{as:datagenprocess}. Define $\varkappa_M(x)=(x\wedge M)\vee(-M)$, for some $M$, where $(x\wedge y)=\min(x,y)$, and $(x\vee y)=\max(x,y)$,  and set,
	\vspace{-.05in}
	\begin{align}\label{eq:parameterv}
		V\coloneqq\sup_{M\geq 1}\sup_{i>0}\Big(\textsc{var}(\varkappa_M(W_i))+2\sum_{j>i}\abs{\textsc{cov}\left(\varkappa_M(W_i),\varkappa_M(W_j)\right)}\Big).
	\end{align} 
Note that $V$ is finite. Then, for any $N \geq  4$, there exist positive constants $C_1, C_2, C_3$, and $C_4$ depending only on $c,\eta_1,\eta_2$ such that, for any $t>0$, we have
	\begin{align*}
\hspace{-0.1in}\pro\left(\sup_{j\leq N}\abs{\sum_{i=1}^jW_i}\geq t\right)\leq  Ne^{-\frac{t^\eta}{C_1}}+e^{-\frac{t^2}{C_2NV}}+e^{-\frac{t^2}{C_3N}\exp\left(\frac{t^{\eta(1-\eta)}}{C_4(\log t)^\eta}\right)}. \numberthis\label{eq:rio}
	\end{align*}
\end{lemma}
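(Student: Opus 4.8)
The plan is to recognize that this lemma is a faithful transcription of the Bernstein-type inequality of Merlevède, Peligrad, and Rio into the notation of Assumption~\ref{as:datagenprocess}, so the entire task reduces to checking that their hypotheses hold in our setting and then reading off their conclusion. Their result applies to a strictly stationary sequence that is exponentially mixing and sub-Weibull, and it produces precisely a three-term bound of the shape in~\eqref{eq:rio}: a large-deviation Weibull term (here $Ne^{-t^\eta/C_1}$), a Gaussian/Bernstein term governed by a variance proxy (here $e^{-t^2/(C_2NV)}$), and a moderate-deviation crossover term (the final summand). I would therefore organize the argument around verifying the two structural hypotheses (mixing rate and tail decay) and the finiteness of $V$, after which the bound follows by direct invocation.

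First I would translate the mixing hypothesis. Condition~(a) gives exponential $\beta$-mixing, $\beta(k)\le\exp(-ck^{\eta_1})$, and as already noted in the text preceding the lemma this implies exponential $\tau$-mixing, $\tau(k)\le e^{-c'k^{\eta_1}}$, via~\cite{chwialkowski2016kernel}; hence the mixing-rate exponent in their theorem is $\eta_1$. Next, condition~(c)-(i) supplies the tail hypothesis $\pro(|W_i|\ge t)\le\exp(1-t^{\eta_2})$, identifying the Weibull tail exponent as $\eta_2$. Combining the two exponents through $1/\eta=1/\eta_1+1/\eta_2$ and invoking condition~(d), namely $1/\eta>1$, places the sequence squarely in the regime $\eta\in(0,1)$ demanded by their result; this is also what makes the exponent $\eta(1-\eta)$ in the third term strictly positive, so that every term in~\eqref{eq:rio} is well defined.

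The only genuinely quantitative point is the claimed finiteness of the variance proxy $V$ in~\eqref{eq:parameterv}, and this is where I expect to spend most of the effort. For each truncation level $M$ the truncated variables $\varkappa_M(W_i)$ are bounded, so $\textsc{var}(\varkappa_M(W_i))$ is finite and, by strict stationarity, independent of $i$. The subtlety, and the main obstacle, is that the covariance sum must be controlled \emph{uniformly in $M$}, which rules out a crude bound scaling with the truncation level. I would instead apply Davydov's covariance inequality for mixing sequences with exponents $1/p+1/q+1/r=1$: since the sub-Weibull tail in~(c)-(i) guarantees that all moments of $W_i$ are finite, one has $\norm{\varkappa_M(W_i)}_{p}\le\norm{W_i}_{p}<\infty$ uniformly in $M$, and therefore $\abs{\textsc{cov}(\varkappa_M(W_i),\varkappa_M(W_j))}\le C\,\alpha(j-i)^{1/r}$ with the constant free of $M$, where $\alpha(k)\le\beta(k)\le e^{-ck^{\eta_1}}$ is the strong-mixing coefficient. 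The exponential decay then makes $\sum_{j>i}\alpha(j-i)^{1/r}$ finite for any fixed $r$, so taking the supremum over $M\ge1$ and $i>0$ yields $V<\infty$.

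With both structural hypotheses verified and $V$ shown to be finite, the conclusion~\eqref{eq:rio} follows by directly applying~\cite[Theorem~1]{merlevede2011bernstein}, folding the dependence on $c,\eta_1,\eta_2$ (and hence on $\eta$ and $V$) into the constants $C_1,\dots,C_4$. The difficulty here is not any single hard inequality of our own but rather the bookkeeping: matching their normalization and parametrization to ours, and in particular ensuring that the covariance bound used to certify $V<\infty$ is uniform across truncation levels, so that the supremum defining $V$ is genuinely finite rather than merely finite for each fixed $M$.
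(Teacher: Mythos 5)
Your proposal is correct and follows essentially the same route as the paper: the paper likewise treats this lemma as a direct restatement of \cite[Theorem 1]{merlevede2011bernstein}, verifying only that condition (a) yields exponential $\tau$-mixing via \cite{chwialkowski2016kernel}, that condition (c)-(i) supplies the Weibull tail, and that condition (d) places $\eta$ in the required range. Your Davydov-based argument for $V<\infty$ uniform in $M$ matches in substance the paper's own bound (given later, inside the proof of Theorem~\ref{th:mainthm2app}), where $V$ is controlled by Cauchy--Schwarz against the $\beta$-mixing coefficients, yielding $V\leq 2^{2/\eta_2}+C4^{1+2/\eta_2}$.
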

To deal with polynomially tailed interactions, i.e., under condition (c)-(ii) of Assumption~\ref{as:datagenprocess}, we prove a concentration inequality for the sum of exponentially $\beta$-mixing random variables with polynomially heavy-tails, which may be of independent interest. 

\begin{lemma}[Concentration for heavy-tailed $\beta$-mixing sum]\label{lm:heavytailconc}
	Let $\{W_j\}_{j \geq 1}$ be a sequence of zero-mean real valued random variables satisfying conditions (a) and (c)-(ii) of Assumption~\ref{as:datagenprocess}, for some $\eta_2>2$. Then for any positive integer $N$, $0\leq d_1\leq 1$, and $d_2\geq 0$, and for any $t>1$, we have,
	\begin{align*}
		\mathbb{P}\left(\sup_{j\leq N}\abs{\sum_{i=1}^jW_i}\geq t\right)
		\leq& \frac{2^{\eta_2+3}}{(d_2\log t)^\frac{1-\eta_2}{\eta_1}}\frac{N}{t^{(1+d_1(\eta_2-1))}}+8\frac{N}{t^{(1+c'd_2)}}
		+2e^{-\frac{t^{2-2d_1}(d_2\log t)^{1/\eta_1}}{9N}},
		\numberthis\label{eq:heavytailconc}
	\end{align*}
where $c'>0$ is a constant.
\end{lemma}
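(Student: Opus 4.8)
The plan is to reduce this dependent, heavy-tailed partial-sum problem to an independent, bounded one by combining the truncation method of \cite{bakhshizadeh2020sharp} with the blocking coupling of \cite{yu1994rates}. There are two scales to tune: a truncation radius for the increments and a block length for the coupling. I would set the block (and gap) length to $b\asymp(d_2\log t)^{1/\eta_1}$ and truncate each increment at level $L\asymp t^{d_1}/b$, so that a full block of truncated increments has magnitude at most $bL\asymp t^{d_1}$. Here $d_1$ governs the truncation radius while $d_2$ fixes, through the exponential $\beta$-mixing rate $\beta(b)\le\exp(-cb^{\eta_1})$ of Assumption~\ref{as:datagenprocess}-(a), how small the coupling error can be made; the choice $b\asymp(d_2\log t)^{1/\eta_1}$ renders $\beta(b)$ polynomially small in $t$.

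First I would write $W_i=U_i+V_i$ with $U_i=W_i\mathbf{1}\{|W_i|\le L\}$ and $V_i=W_i\mathbf{1}\{|W_i|>L\}$, and bound $\sup_{j\le N}|\sum_{i=1}^jW_i|$ by the sum of the suprema of the centred $U$- and $V$-parts. The $V$-part requires no independence: the polynomial tail of condition (c)-(ii) gives $\E|V_i|=\E[|W_i|\mathbf{1}\{|W_i|>L\}]\lesssim L^{1-\eta_2}$ (finite precisely because $\eta_2>2$), so that $\sup_{j}|\sum_{i\le j}(V_i-\E V_i)|\le\sum_{i=1}^N(|V_i|+\E|V_i|)$ has expectation $\lesssim NL^{1-\eta_2}$. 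A Markov inequality at level $t$ then produces a term of order $NL^{1-\eta_2}/t=N\,t^{-(1+d_1(\eta_2-1))}(d_2\log t)^{(\eta_2-1)/\eta_1}$, which is exactly the first summand of \eqref{eq:heavytailconc}.

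For the bounded part I would invoke the blocking construction: partition $\{1,\dots,N\}$ into $\mu\asymp N/b$ blocks of length $b$ separated by gaps of the same order, and use the $\beta$-mixing coupling to replace the retained block-sums of the $U_i$ by independent copies, at total cost $(\mu-1)\beta(b)$, which for the above $b$ is of order $N\,t^{-(1+c'd_2)}$ and yields the mixing term $8N/t^{1+c'd_2}$; the terms falling in the gaps are handled symmetrically. Each coupled block-sum is bounded by $bL\asymp t^{d_1}$ and is centred, so a Bernstein/Hoeffding maximal inequality applied to the $\mu$ independent block-sums yields an exponential bound with exponent of order $t^2/(\mu(bL)^2)\asymp b\,t^{2-2d_1}/N=t^{2-2d_1}(d_2\log t)^{1/\eta_1}/N$, the third summand of \eqref{eq:heavytailconc}. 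A union bound over the three events then finishes the proof.

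I expect the main obstacle to be executing the coupling for the \emph{maximal} quantity $\sup_{j\le N}|\sum_{i\le j}W_i|$ rather than the single terminal sum, and balancing $(L,b)$ so that all three contributions emerge with the stated constants. The sharp truncation analysis of \cite{bakhshizadeh2020sharp} is designed for independent data, so the delicate point is to check that the polynomial-moment estimates ($\E|V_i|$ and the truncated second moment) pass through the block-coupling unchanged and that the block range $bL$ enters the exponent with exactly one power of $b$. A further technicality is the supremum over partial sums: within a block the truncated partial sums shift the running maximum by at most $bL\le t$, so a Lévy/Ottaviani-type maximal inequality — the likely source of the constants $8$ and $2$ — is needed to pass from the block-endpoint maxima to the full $\sup_{j\le N}$.
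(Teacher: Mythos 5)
Your overall strategy coincides with the paper's proof: the paper also truncates at level $M=t^{d_1}/(2(d_2\log t)^{1/\eta_1})$, blocks at scale $A=(d_2\log t)^{1/\eta_1}$, bounds the truncation excess via Markov's inequality and the polynomial tail (first term), couples the alternating block sums of the truncated variables to independent copies (second term), and applies a sub-Gaussian/Hoeffding-type bound to the bounded independent block sums (third term), with the exponent $t^{2-2d_1}(d_2\log t)^{1/\eta_1}/N$ arising exactly as you predict. Your worry about the supremum over partial sums is also resolved the way you suggest: within-block fluctuations are absorbed deterministically since $2AM\le t$; in fact the paper does not even need an Ottaviani/L\'evy inequality --- it simply bounds $\sup_{j\le N}\abs{\sum_{i\le [j/A]}\Sigma^*_{N,M}(I_{2i})}$ by $\sum_{i\le \mu_1}\abs{\Sigma^*_{N,M}(I_{2i})}$ and controls the moment generating function of that sum of bounded variables.

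The one step where your argument does not deliver the stated bound is the coupling. You invoke the Berbee/Yu total-variation coupling of \cite{yu1994rates}, whose cost is the additive probability $(\mu-1)\beta(b)\approx (N/b)\,e^{-cb^{\eta_1}}=(N/b)\,t^{-cd_2}$; this is \emph{not} of order $Nt^{-(1+c'd_2)}$ as you claim --- it is missing the factor $1/t$, and for small $d_2$ (say $d_2\to 0$) your term is of order $N$ while the lemma requires order $N/t$. The paper obtains the extra $1/t$ by using a different coupling device: the $\tau$-mixing ($L^1$) coupling of \cite{dedecker2004coupling}, which produces independent copies satisfying $\E\abs{\Sigma_{N,M}(I_{2i})-\Sigma^*_{N,M}(I_{2i})}\le A\tau(A)$, followed by Markov's inequality at level $t/2$; this gives a contribution $2A\mu_1\tau(A)/t\lesssim Nt^{-(1+c'd_2)}$, using that exponential $\beta$-mixing implies exponential $\tau$-mixing. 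Replacing your total-variation coupling with this expectation-based coupling plus Markov closes the gap, and the rest of your outline then goes through essentially as in the paper.
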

Note that we do not need condition (d) of Assumption~\ref{as:datagenprocess} for Lemma~\ref{lm:heavytailconc}. Since the tail probabilities decay polynomially and the mixing coefficients decay exponentially fast, the effect of heavy-tail dominates and hence, there is no trade-off between $\eta_1$ and $\eta_2$. Lemma~\ref{lm:heavytailconc} extends the results of~\cite{bakhshizadeh2020sharp} (on $\iid$ heavy-tailed random variables) to the exponentially $\beta$-mixing setting. The results of~\cite{bakhshizadeh2020sharp} show that even for the $\iid$ case, the tail probability of the sum decays polynomially with $t$. We show that similar polynomial tail bounds can be obtained (up to $\log$ factors) even in the dependent setting. Furthermore, when $\beta(k)=0,k>0$, the sequence is $\iid$, in which case we recover the result of \cite{bakhshizadeh2020sharp}. We also remark that the above two results are crucial to derive our convergence rates for ERM. As a preview, when the \dgp has only $(2+\delta)$-moments, for some $\delta>0$, Lemma~\ref{lm:heavytailconc} eventually will lead to risk bounds that hold with polynomial probability, whereas when the \dgp is sub-Weibull (see Definition~\ref{def:sw}), Lemma~\ref{lm:rio} would lead to risk bounds that hold with exponential probability. 

\section{Main Results}\label{sec:mainresults}
In this section, we state our main results on the rates of convergence of ERM for both squared and convex loss functions. First, we consider the squared loss.

\begin{theorem}[Rates of ERM with squared loss]\label{th:mainthm2}
	Consider the ERM procedure with the squared error loss. For  $\tau_0<\tau^2Q_\cH(2\tau)/8$,  setting $\mu={N^rQ_\cH(2\tau)c^\frac{1}{\eta_1}}/{4}$, for some constants $c,c' >0$, and $0<r<1$,  we have, for sufficiently large $N$, and some positive constants $\widetilde{C}_1, \widetilde{C}_2$, the following:
	\begin{enumerate}[leftmargin=0.2in]
	    \item Under conditions (a), (b), (c)-(i), and (d) of Assumption~\ref{as:datagenprocess}, for $\ 0<\iota<1/4$,
	    \begin{align*}
\|\hat{f}-f^*\|_{L_2}\coloneqq \left(\int (\hat f - f^*)^2 d\pi\right)^\frac{1}{2} \leq \max\left\{N^{-\frac{1}{4}+\iota},\omega_\mu\left(\cF-\cF,\frac{\tau Q_{\cF-\cF}(2\tau)}{16}\right)\right\}, \numberthis\label{eq:errorboundalpha}
	\end{align*}
	with probability at least (for $V$ as defined in~\eqref{eq:parameterv})
	\begin{align*}
		1-\widetilde{C}_1N^rQ_\cH(2\tau)c^\frac{1}{\eta_1}\exp(-N^{(1-r)\eta_1})-\widetilde{C}_2N\exp\left(-(N^{\frac{1}{2}+2\iota}\tau_0)^\eta\right).\numberthis\label{eq:mathscrpalpha}
		\end{align*}
	    \item Under conditions (a), (b), and (c)-(ii) of Assumption~\ref{as:datagenprocess}, for $\ 0<\iota<(1-1/\eta_2)/4$,
	    \begin{align*}
		\|\hat{f}-f^*\|_{L_2} \leq \max\left\{N^{-\frac{1}{4}\left(1-\frac{1}{\eta_2}\right)+\iota},\omega_\mu\left(\cF-\cF,\frac{\tau Q_{\cF-\cF}(2\tau)}{16}\right)\right\}. \numberthis\label{eq:errorboundalphapoly}
	\end{align*}
	with probability at least
	\begin{align*}
		1-\widetilde{C}_1N^rQ_\cH(2\tau)c^\frac{1}{\eta_1}\exp(-N^{(1-r)\eta_1})-\widetilde{C}_2\tau_0^{-\frac{2\eta_2}{1+\eta_2}}N^{-\frac{4\iota \eta_2}{1+\eta_2}}. \numberthis\label{eq:thm10probpoly}
	\end{align*}
	\end{enumerate}
The detailed expression of the probabilities are provided in the Appendix (Theorem~\ref{th:mainthm2app}).
\end{theorem}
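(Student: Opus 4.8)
The plan is to adapt Mendelson's ``learning without concentration'' scheme to the $\beta$-mixing case, splitting the analysis into a small-ball lower bound for the quadratic part and a direct tail bound for the multiplier part. Write $h=f-f^*$ and recall $\xi_i=f^*(X_i)-Y_i$. For the squared loss the excess empirical risk is exactly
$$P_N\ell_f-P_N\ell_{f^*}=\frac1N\sum_{i=1}^N h^2(X_i)+\frac2N\sum_{i=1}^N\xi_i h(X_i),$$
which is the instance of the decomposition \eqref{eq:decomp} with $\ell''\equiv 2$ and $\ell'(\xi_i)=2\xi_i$. Because $\cF$ is convex and $\ell$ is convex, $f\mapsto P_N\ell_f$ is convex and minimized at $\hat f$, so it is enough to prove that, with the stated probability, $P_N\ell_f>P_N\ell_{f^*}$ simultaneously for all $f\in\cF$ on the sphere $\|f-f^*\|_{L_2}=r^*$, where $r^*$ is the right-hand side of \eqref{eq:errorboundalpha}; convexity then forces $\hat f$ into the interior of the corresponding ball. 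Invoking first-order optimality of $f^*$, namely $\E[\xi_1 h(X_1)]\ge 0$ for every $f\in\cF$, positivity of the excess risk is implied by the single inequality
$$\frac1N\sum_{i=1}^N h^2(X_i)\;>\;2\Big|\frac1N\sum_{i=1}^N\big(\xi_i h(X_i)-\E[\xi_1 h(X_1)]\big)\Big|.$$

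For the left-hand side I would establish a uniform small-ball lower bound $\frac1N\sum_i h^2(X_i)\ge\kappa\|h\|_{L_2}^2$ with $\kappa\asymp\tau^2 Q_{\cF-\cF}(2\tau)$. Starting from $h^2(X_i)\ge\tau^2\|h\|_{L_2}^2\,\mathbf 1\{|h(X_i)|\ge\tau\|h\|_{L_2}\}$, this reduces to a uniform lower-deviation bound for an average of bounded indicators. Here I would use Yu's blocking construction \eqref{eq:partition}: keep $\mu$ blocks of length $a$, discard the interlaced length-$b$ blocks, and pay the coupling cost $\mu\,\beta(b)$ to pass to the i.i.d. surrogate $\{\widetilde X_i\}_{i=1}^\mu$ of Definition~\ref{def:omegadef}; on the surrogate, symmetrization and a bounded-difference inequality are legitimate and control the indicator process. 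The localized Rademacher complexity $\omega_\mu(\cF-\cF,\gamma)$ at $\gamma=\tau Q_{\cF-\cF}(2\tau)/16$ is exactly the radius past which this supremum is negligible, so the lower bound holds uniformly once $\|h\|_{L_2}\ge\omega_\mu(\cF-\cF,\gamma)$, giving the second entry of the maximum in \eqref{eq:errorboundalpha}. Taking $\mu=N^r Q_{\cF-\cF}(2\tau)c^{1/\eta_1}/4$ and $b\asymp N^{1-r}$ balances block count against coupling cost, and $\beta(b)\le e^{-cb^{\eta_1}}$ turns $\mu\beta(b)$ into the mixing penalty $\widetilde C_1 N^r Q_{\cF-\cF}(2\tau)c^{1/\eta_1}\exp(-N^{(1-r)\eta_1})$ in \eqref{eq:mathscrpalpha}.

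For the right-hand side I would avoid symmetrization entirely and feed the centered interaction $W_i=\xi_i h(X_i)-\E[\xi_1 h(X_1)]$ directly into the mixing concentration inequalities. Under condition (c)-(i) each $W_i$ is exponentially heavy-tailed, so Lemma~\ref{lm:rio} applied with $t=\tau_0 N^{1/2+2\iota}$ yields, for a fixed $h$, $|\frac1N\sum_i W_i|\le\tau_0 N^{-1/2+2\iota}$ off an event of probability $Ne^{-(\tau_0 N^{1/2+2\iota})^\eta/C_1}+\cdots$; a union bound over a minimal net of the sphere (whose log-cardinality I expect to be absorbed into the slack $\iota$) makes this uniform and produces the term $\widetilde C_2 N\exp(-(N^{1/2+2\iota}\tau_0)^\eta)$. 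Since condition (c) is an absolute, $\|h\|_{L_2}$-free tail bound, this deviation does not rescale with $r^*$, so the requirement $\kappa(r^*)^2>2\tau_0 N^{-1/2+2\iota}$ with $\tau_0<\tau^2 Q_{\cF-\cF}(2\tau)/8$ forces $r^*\gtrsim N^{-1/4+\iota}$, the first entry of the maximum; combining the two requirements gives \eqref{eq:errorboundalpha} and \eqref{eq:mathscrpalpha}. For part~2 the only change is to replace Lemma~\ref{lm:rio} by Lemma~\ref{lm:heavytailconc}: optimizing its three terms over $(d_1,d_2)$ at $t\asymp\tau_0 N^{1/2+2\iota}$ enlarges the affordable deviation by a factor $N^{1/(2\eta_2)}$, which degrades the rate to $N^{-\frac14(1-1/\eta_2)+\iota}$ and replaces the exponential confidence by the polynomial one $\widetilde C_2\tau_0^{-2\eta_2/(1+\eta_2)}N^{-4\iota\eta_2/(1+\eta_2)}$ of \eqref{eq:errorboundalphapoly} and \eqref{eq:thm10probpoly}.

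The main obstacle is the multiplier term. In the i.i.d. theory $\sup_h|\frac1N\sum\xi_i h(X_i)|$ is tamed by a multiplier/symmetrization inequality and the associated contraction principle, neither of which survives under dependence; replacing this device by the per-function tail assumption (c) together with the mixing concentration inequalities is what makes the argument go through. The delicate point is to upgrade the pointwise deviation bound to a uniform statement over the sphere while keeping the net entropy subdominant to the $N^{-1/4}$ rate, which is exactly where the exponential decay of $\beta(b)$ and the freedom in $\iota$ are spent. A secondary difficulty is reconciling Yu's coupling with the localization scale, so that the effective sample size $\mu$, and not $N$, governs the Rademacher complexity $\omega_\mu$ in the quadratic lower bound.
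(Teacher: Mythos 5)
Your proposal follows the paper's architecture almost step for step: the same decomposition of $P_N\ell_f$ with $\E[\xi_1(f-f^*)(X_1)]\ge 0$ from convexity, the same small-ball/blocking treatment of the quadratic term (Yu's coupling at cost $(\mu-1)\beta(b)$, a bounded-difference inequality on the i.i.d.\ block surrogate, localization at $\omega_\mu(\cF-\cF,\tau Q_{\cF-\cF}(2\tau)/16)$, and the same choices $\mu=N^rQ_\cH(2\tau)c^{1/\eta_1}/4$, $b\asymp N^{1-r}$ producing the first term of \eqref{eq:mathscrpalpha}; this is exactly Lemma~\ref{lm:thilowbound} and Corollary~\ref{cor:Vbound}), and the same use of Lemma~\ref{lm:rio}, respectively Lemma~\ref{lm:heavytailconc}, for the interaction term, with the same forcing of $\mathscr{A}(N)=N^{-1/4+\iota}$ (resp.\ $N^{-(1-1/\eta_2)/4+\iota}$) against $\tau_0<\tau^2Q_\cH(2\tau)/8$. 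Restricting to the sphere via convexity rather than proving positivity of $P_N\ell_f-P_N\ell_{f^*}$ for all $f$ of large norm is a cosmetic difference.

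The genuine gap is your uniformity step for the multiplier term: the union bound over a minimal net of the sphere, with the net's log-cardinality ``absorbed into the slack $\iota$.'' This does not go through under the theorem's hypotheses, for two reasons. First, nothing in Assumption~\ref{as:datagenprocess} controls covering numbers of $\{f\in\cF:\|f-f^*\|_{L_2}=r^*\}$; the only complexity parameter in the squared-loss theorem is the Rademacher quantity $\omega_\mu$, and for a general convex $\cF\subset L_2(\pi)$ the entropy of this sphere need not be $o\bigl((N^{1/2+2\iota}\tau_0)^{\eta}\bigr)$, so the union bound can swamp the term $\widetilde{C}_2N\exp\bigl(-(N^{1/2+2\iota}\tau_0)^{\eta}\bigr)$ — note that $\widetilde C_2$ in \eqref{eq:mathscrpalpha} carries no entropy factor. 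Second, and more fundamentally, a net argument requires controlling the oscillation $N^{-1}\sum_i\xi_i(f-f_j)(X_i)$ between a generic $f$ and its net point $f_j$; condition (c) is a \emph{scale-free} tail bound (the right-hand sides of \eqref{eq:exptail} and \eqref{eq:polytail} do not shrink with $\|f-f_j\|_{L_2}$), so refining the net does not make this oscillation small, and bounding it uniformly over an $\epsilon$-ball is precisely the localized multiplier-process problem that dependence rules out. The paper's proof avoids nets entirely: in \eqref{eq:noiseconcalpha} the deviation threshold is taken proportional to $\|f-f^*\|_{L_2}^2$ rather than at a fixed absolute level, Lemma~\ref{lm:rio} is applied at the worst case $\|f-f^*\|_{L_2}=\mathscr{A}(N)$, and this is combined directly with the uniform quadratic lower bound, so the stated probability contains no covering-number contribution. (Whether that pointwise-to-uniform combination is itself fully airtight is a separate question about the paper, but it is the proof given, and your instinct that uniformity needs an argument is sound — your proposed fix is just not available from the stated assumptions; it would require, e.g., an entropy condition together with a scale-sensitive version of condition (c).)
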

\begin{remark}
To the best of our knowledge, the above result is the first result on understanding rates of convergence of ERM with squared error loss functions with unbounded noise (as well as the loss) for heavy-tailed dependent data. For a wide range of function classes  $\mathcal{F}$ used in practice (see Section~\ref{sec:illustexp}), the dominant term in the rate of convergence is $N^{-\frac{1}{4}+\iota}$, for $0 < \iota <1/4$. Furthermore, under the stronger condition (c)-(i) of Assumption~\ref{as:datagenprocess}, the risk bound holds with exponential probability, whereas under the weaker condition (c)-(ii), it holds only with polynomial probability. 
\end{remark}
We now show that when the small-ball condition in Assumption~\ref{as:datagenprocess} is replaced with the stronger norm-equivalence assumption, also considered in~\cite{mendelson2020robust}, one could obtain improved rates. Examples of random vectors that satisfy the norm-equivalence conditions include multivariate student $t$-distribution and sub-exponential random variables. We refer to Section~\ref{sec:illustexp} for illustrative examples.

\begin{assumption}[$L_p-L_2$ norm-equivalence]\label{as:normequiv}
	Let $\cF\subset L_q(\pi)$ be a class of functions for some $q\geq 3$. The function class $\cF-\cF=\{f-h:f,h\in\cF\}$ is $L_p-L_2$ norm-equivalent for some $ p>2$, if there exists an $M_1>0$ such that, $\|h\|_{L_p} \coloneqq (\int |h|^p \, d\pi)^{1/p} \leq M_1 \|h\|_{L_2},~\forall h\in\cF-\cF$.\end{assumption}
\begin{corollary}\label{cor:mainthm2}
 For the ERM procedure with squared error loss, under Assumptions~\ref{as:datagenprocess}, with condition (b) replaced by Assumption~\ref{as:normequiv} with $p=8$, for some $0<\iota<\frac{1}{2}$ and $r, \mu$ and $\tau_0$ same as in Theorem~\ref{th:mainthm2}, for sufficiently large $N$, we have
		\begin{align*}
			\|\hat{f}-f^*\|_{L_2}\leq \max\left\{N^{-\frac{1}{2}+\iota},\omega_\mu(\cF-\cF,\tau Q_{\cF-\cF}(2\tau)/16)\right\}, \numberthis\label{eq:errorboundalphaxcor}
		\end{align*}
with probability at least (for some constants $\widetilde{C}_1$ and $\widetilde{C}_2$)
 		\begin{align*}
				1-\widetilde{C}_1N^rQ_\cH(2\tau)c^\frac{1}{\eta_1}\exp(-N^{(1-r)\eta_1})-\widetilde{C}_2N\exp\left(-(N^{2\iota}\tau_0)^\eta/M_1\right).
		\end{align*}
\end{corollary}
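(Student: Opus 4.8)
The plan is to mirror the proof of part~1 of Theorem~\ref{th:mainthm2}, keeping the $\beta$-mixing blocking machinery intact, but to exploit the $L_8$-$L_2$ equivalence in two places: to recover the lower bound on the quadratic part of~\eqref{eq:decomp}, and --- more importantly --- to sharpen the variance proxy $V$ of Lemma~\ref{lm:rio} that governs the multiplier term. I would first reduce the problem to a fixed sphere. Writing $r^\ast \coloneqq \max\{N^{-1/2+\iota},\ \omega_\mu(\cF-\cF,\ \tau Q_{\cF-\cF}(2\tau)/16)\}$ and $h\coloneqq f-f^*$, convexity of $\cF$ together with the optimality $P_N\ell_{\hat f}\le P_N\ell_{f^*}$ implies that if $\|\hat f-f^*\|_{L_2}>r^\ast$, then some $\tilde f=f^*+\lambda(\hat f-f^*)\in\cF$ with $\|\tilde f-f^*\|_{L_2}=r^\ast$ also satisfies $P_N\ell_{\tilde f}\le P_N\ell_{f^*}$ (by convexity of $P_N\ell$ along the segment). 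Hence it suffices to show that, with the stated probability, the empirical excess risk $P_N\ell_f-P_N\ell_{f^*}=\frac1N\sum_{i=1}^N h^2(X_i)+\frac2N\sum_{i=1}^N\xi_i h(X_i)$ is strictly positive for every $f\in\cF$ with $\|h\|_{L_2}=r^\ast$, contradicting the previous display.

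For the quadratic part I would use that $L_8$-$L_2$ equivalence implies a quantitative small-ball condition via the Paley--Zygmund inequality, with $Q_{\cF-\cF}(2\tau)$ bounded below in terms of $M_1$. This lets the uniform lower bound $\frac1N\sum_i h^2(X_i)\ge\frac12(r^\ast)^2$ on the sphere be obtained exactly as in Theorem~\ref{th:mainthm2}, so the blocking error reappears unchanged as the first term $\widetilde C_1 N^r Q_\cH(2\tau)c^{1/\eta_1}\exp(-N^{(1-r)\eta_1})$ of the claimed probability, and the complexity of the localized sphere is absorbed into $\omega_\mu$. Verifying the uniform control here is where the fourth moment of $h^2$, i.e.\ $\|h^2\|_{L_4}=\|h\|_{L_8}^2\le M_1^2\|h\|_{L_2}^2$, is used --- one reason $p=8$ is the right exponent.

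The improvement in the rate comes from the multiplier term. I would split $\frac2N\sum_i\xi_i h(X_i)=2\,\mathbb{E}[\xi h(X)]+\frac2N\sum_i W_i$ with $W_i=\xi_i h(X_i)-\mathbb{E}[\xi h(X)]$; the mean is nonnegative by the variational inequality $\mathbb{E}[\xi(f-f^*)(X)]\ge 0$ for $f\in\cF$, so it only helps, while $\{W_i\}$ is a zero-mean exponentially $\beta$-mixing sequence obeying condition (c)-(i), so Lemma~\ref{lm:rio} applies. The key point is that norm-equivalence forces $V\lesssim M_1^2(r^\ast)^2$: the single-term variance satisfies $\mathrm{var}(W_i)\le\mathbb{E}[\xi^2h^2(X)]\le\|\xi\|_{L_4}^2\|h\|_{L_4}^2\le M_1^2\|\xi\|_{L_4}^2(r^\ast)^2$, while the truncated covariance sum in~\eqref{eq:parameterv} is controlled by a covariance inequality for $\beta$-mixing sequences that costs a factor $\|W_i\|_{L_4}\le\|\xi\|_{L_8}\|h\|_{L_8}\le M_1\|\xi\|_{L_8}\,r^\ast$ against the geometrically summable mixing coefficients --- this fourth-moment requirement on $W_i$, equivalently the eighth moment of $h$, is the other reason $p=8$ is needed. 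With threshold $s\asymp\tau_0(r^\ast)^2N$, the Gaussian branch of~\eqref{eq:rio} behaves like $\exp(-s^2/(C_2NV))\sim\exp(-\tau_0^2(r^\ast)^2N/(C_2M_1^2))$, which is small once $(r^\ast)^2N\to\infty$, i.e.\ once $r^\ast\gtrsim N^{-1/2}$; this relaxes the constraint $(r^\ast)^4N\to\infty$, i.e.\ $r^\ast\gtrsim N^{-1/4}$, of Theorem~\ref{th:mainthm2}, where $V$ is only bounded by a constant. At $s\asymp\tau_0 N^{2\iota}$ the dominant contribution to the failure probability is then the $Ne^{-s^\eta/C_1}$ branch, yielding the second term of the claimed probability (with the $M_1$ dependence inherited from $V$). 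Combining, $P_N\ell_f-P_N\ell_{f^*}\ge\frac12(r^\ast)^2-\frac2N|\sum_iW_i|>0$ on the sphere, the desired contradiction.

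The main obstacle I anticipate is the interaction between uniformity and the sharpened variance: Lemma~\ref{lm:rio} is stated for a single fixed sequence, whereas $h$ is data-dependent and ranges over the sphere. I would handle this exactly as in Theorem~\ref{th:mainthm2}, via the convexity reduction to the fixed radius $r^\ast$ and by pushing the supremum over the localized sphere into $\omega_\mu$, so that the concentration lemma is invoked only through the aggregate proxy $V$ rather than uniformly over functions. The delicate bookkeeping is to check that $L_8$-$L_2$ equivalence simultaneously delivers the $L_4$ control of $h^2$ for the quadratic step, the variance and covariance control giving $V\lesssim M_1^2(r^\ast)^2$ for the multiplier step, and the Paley--Zygmund small-ball constant feeding the blocking error, all without degrading the first probability term.
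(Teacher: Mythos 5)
Your proposal is correct and follows essentially the same route as the paper's own proof: norm-equivalence is used (i) to recover the small-ball condition (via the Paley--Zygmund-type Lemma 4.1 of \cite{pmlr-v35-mendelson14}) and (ii), via Cauchy--Schwarz on $\mathbb{E}[\xi^2h^2]$ and $\mathbb{E}[\xi^4h^4]$, to sharpen the variance proxy to $V\lesssim M^2\|h\|_{L_2}^2$, so that the Gaussian branch of Lemma~\ref{lm:rio} becomes $\exp(-c N\tau_0^2 r^2)$, the choice $\mathscr{A}(N)=N^{-1/2+\iota}$ is admissible, and the sub-Weibull branch $N\exp(-(N^{2\iota}\tau_0)^\eta/C)$ dominates the failure probability --- exactly the paper's argument. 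The only slight misattribution is your claim that the quadratic step uses $\|h^2\|_{L_4}$: in the paper that step relies on truncated indicators, blocking, and $\omega_\mu$ exactly as in Theorem~\ref{th:mainthm2} and needs no higher moments, so $p=8$ enters solely through the fourth moment of $\xi h$ inside $V$ (which is why, as the paper remarks, independence of $\xi$ and $X$ would reduce the requirement to $p=4$).
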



\begin{remark}
In the model-based nonparametric regression setting  (as discussed in \textbf{Related Works}) with $X_i$ being independent of $\xi_i$ for all $i=1,\ldots, n$, but $\xi_i$ being dependent on each other, in~\cite[Proposition 3]{han2019convergence} authors show a lower bound of $N^{-\frac{1}{2+\epsilon}}$ for some $\epsilon >0$, for sufficiently heavy-tailed input
$X_i$. The above result provides an upper bound of similar order, for a more general setting in comparison to~\cite{han2019convergence}.  We also remark that for Corollary~\ref{cor:mainthm2}, in the model-based setting, if we assume that $\xi_i$ is independent of $X_i$, we have the same conclusion with just $p=4$ instead of $p=8$. Furthermore, note that in part 2 of Theorem~\ref{th:mainthm2}, $L_8$ norm does not exist for $\eta_2\leq 8$. We have elaborated more on how this result compares Theorem 3.1 in \cite{pmlr-v35-mendelson14} later in Section~\ref{sec:reltoempiricalprocess}.
\end{remark}

We now present our results for the class of convex loss functions that are locally strongly-convex. 
\begin{assumption}[Convex loss]\label{as:lscgc}
    The loss function $\ell:\mathbb{R}\to \mathbb{R}^+\cup \{0\}$ is a convex loss function which is strongly convex in the neighborhood of $0$, i.e., there exists a $t_2>0$ such that for any $x,y\in[-t_2,t_2]$, $\ell(y)\geq \ell(x)+ \ell(x)'(y-x)+\mu_c(y-x)^2/2$ for some constant $\mu_c>0$.
\end{assumption}
\begin{theorem}[Rates of ERM with convex loss]\label{th:mainthm2con}
	Consider ERM with loss functions that satisfy Assumption~\ref{as:lscgc}. For  $\tau_0<c_2Q_{\cF-\cF}(2\tau)\rho(0,t_2)\tau^2$, $t_2=\mathcal{O}((\kappa_0+1/\sqrt{Q_{\cH}(2\tau)})\|\xi\|_{L_2})$, setting $\mu = N^{\eta_1/(1+\eta_1)}$, for some constants $c,c' >0$, we have, for any $N \geq 4$, the following:
	\begin{enumerate}[leftmargin=0.2in]
	    \item Under conditions (a), (b), (c)-(i), and (d) of Assumption~\ref{as:datagenprocess}, for $\ 0<\iota<\frac{1}{4}$, 
	    \begin{align*}
\|\hat{f}-f^*\|_{L_2} \leq \max\left\{N^{-\frac{1}{4}+\iota}, 2\omega_Q(\cF-\cF,N,\zeta_1,\zeta_2)\right\}, \numberthis\label{eq:errorboundalphacon}
	\end{align*}
	with probability at least (for $V$ is defined in~\eqref{eq:parameterv} and some positive $c_9, c_{10}, \widetilde{C}_3$)
	\begin{align*}
	&1-c_9Q_{\cH}(2\tau)^{1-\frac{1}{\eta_1}} N^{\eta_1/(1+\eta_1)}e^{-c_{10}Q_{\cH}(2\tau)^{1+\frac{1}{\eta_1}}N^{\frac{\eta_1}{1+\eta_1}}}
	-\widetilde{C}_3N\exp\left(-(N^{\frac{1}{2}+2\iota}\tau_0)^\eta/C_1\right).
\end{align*}
	    \item Under conditions (a), (b), and (c)-(ii) of Assumption~\ref{as:datagenprocess}, for $\ 0<\iota<(1-1/\eta_2)/4$,
	    \begin{align*}
		\|\hat{f}-f^*\|_{L_2} \leq \max\left\{N^{-\frac{(1-1/\eta_2)}{4}+\iota}, 2\omega_Q(\cF-\cF,N,\zeta_1,\zeta_2)\right\}, \numberthis\label{eq:errorboundalphapolycon}
	\end{align*}
	with probability at least (for constants $c_9, c_{10}, \widetilde{C}_4>0$)
	\begin{align*}
		&1-c_9Q_{\cH}(2\tau)^{1-\frac{1}{\eta_1}} N^{\eta_1/(1+\eta_1)}e^{-c_{10}Q_{\cH}(2\tau)^{1+\frac{1}{\eta_1}}N^{\eta_1/(1+\eta_1)}}-\widetilde{C}_4\tau_0^{-\frac{2\eta_2}{1+\eta_2}}N^{-\frac{4\iota \eta_2}{1+\eta_2}}. \numberthis\label{eq:thm10probpolycon}
	\end{align*}
	\end{enumerate}
\end{theorem}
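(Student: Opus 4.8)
The plan is to prove Theorem~\ref{th:mainthm2con} by following the two-term decomposition in \eqref{eq:decomp}, which for a locally strongly-convex loss lower bounds $P_N\ell_f$ by a \emph{quadratic term} $\tfrac{1}{16N}\sum_i \ell''(\widetilde\xi_i)(f-f^*)^2(X_i)$ and a \emph{multiplier/interaction term} $\tfrac1N\sum_i \ell'(\xi_i)(f-f^*)(X_i)$. The key structural fact we exploit is that at the minimizer $\hat f$ we have $P_N\ell_{\hat f}\le P_N\ell_{f^*}$, so the quadratic term evaluated at $\hat f$ must be dominated by the negative of the interaction term. The strategy is therefore to (i) establish a uniform lower bound on the quadratic term of the form $c\,\mu_c\,Q_{\cH}(2\tau)\,\tau^2\,\|f-f^*\|_{L_2}^2$ for all $f$ with $\|f-f^*\|_{L_2}$ at least the claimed rate, and (ii) establish a matching \emph{upper} bound on the interaction term so that the quadratic term strictly dominates, forcing $\|\hat f-f^*\|_{L_2}$ below the threshold. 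The first step uses the small-ball condition (b) together with the blocking technique of~\cite{yu1994rates} to reduce to $\mu$ independent blocks, and then a Rademacher-symmetrization/Gaussian-width control via $\omega_Q(\cF-\cF,N,\zeta_1,\zeta_2)$; this is why both $\omega_1$ (Gaussian) and $\omega_2$ (Rademacher) complexities appear.

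For the quadratic term, I would first localize: restrict attention to $h=f-f^*$ on the sphere $\|h\|_{L_2}=r$ for $r$ at the target rate, using convexity of $\cF$ (star-shapedness around $f^*$) to transfer a sphere bound to a ball bound. Since $\ell''\ge \mu_c$ on the strong-convexity neighborhood $[-t_2,t_2]$, I would use the condition $t_2=\mathcal{O}((\kappa_0+1/\sqrt{Q_{\cH}(2\tau)})\|\xi\|_{L_2})$ to guarantee that $\widetilde\xi_i$ falls in $[-t_2,t_2]$ on a large enough fraction of samples, so that $\ell''(\widetilde\xi_i)\ge\mu_c$ effectively. Then a small-ball lower bound gives $\tfrac1N\sum_i \mathbbm{1}\{|h(X_i)|\ge \tau\|h\|_{L_2}\}\gtrsim Q_{\cH}(2\tau)$ with high probability, blockwise, which lower bounds $\tfrac1N\sum_i h^2(X_i)$ by $\tau^2 Q_{\cH}(2\tau)\|h\|_{L_2}^2$ up to a Rademacher deviation of size $\omega_\mu$. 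The blocking reduction introduces the $\mu=N^{\eta_1/(1+\eta_1)}$ effective sample size and a mixing penalty $\beta(b)\cdot(\text{number of blocks})\le c_9 Q_{\cH}(2\tau)^{1-1/\eta_1}N^{\eta_1/(1+\eta_1)}e^{-c_{10}Q_{\cH}(2\tau)^{1+1/\eta_1}N^{\eta_1/(1+\eta_1)}}$, which is exactly the first term in the stated failure probabilities.

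For the interaction term, this is where the two cases split. Under condition (c)-(i) the per-sample interaction $\ell'(\xi_1)(f-f^*)(X_1)$ is sub-Weibull, so I would invoke Lemma~\ref{lm:rio} (Merlev\`ede et al.) with $W_i=\ell'(\xi_i)h(X_i)-\E[\cdots]$, using the heavy-tail/mixing trade-off $1/\eta=1/\eta_1+1/\eta_2>1$ of condition (d), to bound $\sup_{j\le N}|\sum_i W_i|$ by roughly $N^{1/2+2\iota}\tau_0$ with exponential probability $\widetilde C_3 N\exp(-(N^{1/2+2\iota}\tau_0)^\eta/C_1)$, the second failure term in part 1. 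Under condition (c)-(ii) the interaction is only polynomially heavy-tailed, so I would instead apply the new Lemma~\ref{lm:heavytailconc}, optimizing the free parameters $d_1,d_2$; the dominant polynomial tail forces the slower rate $N^{-(1-1/\eta_2)/4+\iota}$ and yields only polynomial probability $\widetilde C_4\tau_0^{-2\eta_2/(1+\eta_2)}N^{-4\iota\eta_2/(1+\eta_2)}$, the second term in part 2. Combining, if $r$ exceeds the stated maximum of the rate and $2\omega_Q$, the quadratic term (of order $r^2 Q_\cH(2\tau)\tau^2$) strictly exceeds the interaction term (of order $r\cdot N^{-1/2+\cdots}$), contradicting optimality of $\hat f$; hence $\|\hat f-f^*\|_{L_2}$ is below the threshold.

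The main obstacle I anticipate is making the \emph{interplay} between the two terms uniform over the whole class $\cF-\cF$ while respecting the non-$\iid$ structure. In the $\iid$ multiplier-process approach one symmetrizes and contracts, but here symmetrization is unavailable across dependent samples, so the uniformity over $f$ in condition (c) must be purchased through the complexity measure $\omega_Q$ after the blocking decouples the samples into $\mu$ near-independent blocks — and one must carefully track that the pointwise-in-$f$ concentration Lemmas~\ref{lm:rio} and~\ref{lm:heavytailconc} combine with a chaining/union bound over the localized class without destroying the rate. A secondary delicate point is verifying the strong-convexity localization: ensuring the midpoints $\widetilde\xi_i$ lie in $[-t_2,t_2]$ for enough indices requires the stated scaling of $t_2$ in terms of $\|\xi\|_{L_2}$ and $Q_\cH(2\tau)$, and controlling the complementary event without an $\iid$ Bernstein bound is the technically heaviest part.
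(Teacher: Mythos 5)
Your overall strategy coincides with the paper's: the same decomposition \eqref{eq:decomp}, a small-ball/blocking lower bound on the quadratic term, pointwise application of Lemma~\ref{lm:rio} (case (c)-(i)) and Lemma~\ref{lm:heavytailconc} with optimized $d_1,d_2$ (case (c)-(ii)) for the interaction term, and the contradiction with $P_N L_{\hat f}\le 0$. However, there is a genuine gap in how you handle the strong-convexity localization, and it is not merely a technical nuisance but the step that forces the paper's proof to be structured in two stages. Since $|\widetilde{\xi}_i|\le|\xi_i|+|(f-f^*)(X_i)|$, and on the good index set the paper can only guarantee $|(f-f^*)(X_i)|\le c_9\bigl(2\tau+1/\sqrt{Q_{\cH}(2\tau)}\bigr)\|f-f^*\|_{L_2}$, the curvature $\ell''(\widetilde{\xi}_i)$ is a priori bounded below only by $\rho(0,t_0)$ with $t_0\asymp\bigl(2\tau+1/\sqrt{Q_{\cH}(2\tau)}\bigr)\bigl(\|\xi\|_{L_2}+\|f-f^*\|_{L_2}\bigr)$, i.e.\ an interval whose width \emph{grows with} $\|f-f^*\|_{L_2}$. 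Your plan to ``guarantee that $\widetilde\xi_i$ falls in $[-t_2,t_2]$ on a large enough fraction of samples'' with $t_2=\mathcal{O}\bigl((\kappa_0+1/\sqrt{Q_{\cH}(2\tau)})\|\xi\|_{L_2}\bigr)$ cannot work directly: for $f$ far from $f^*$ the midpoints simply do not lie in that fixed interval, so Assumption~\ref{as:lscgc} gives no curvature there at all.

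The paper closes this gap with an extra argument you do not have: it first proves the theorem with the $f$-dependent threshold $t_0$ (Proposition~\ref{pro:mainthm2appcon}), and then uses the star-shape/homogeneity inclusion \eqref{eq:lambdastar} together with Lemma~\ref{lm:5.6equiv} and Lemma~\ref{lm:fhatfstarclose} to show the a priori localization $\|\hat f-f^*\|_{L_2}\le\max(\|\xi\|_{L_2},2\omega_Q)$ with high probability; only on this event can $t_0$ be replaced by the $t_2$ appearing in the statement of Theorem~\ref{th:mainthm2con}. Relatedly, your quadratic-term argument invokes only the small-ball \emph{lower} bound $\frac1N\sum_i\mathbbm{1}\{|h(X_i)|\ge\tau\|h\|_{L_2}\}\gtrsim Q_{\cH}(2\tau)$, which suffices for squared loss (Lemma~\ref{lm:th5.3eqv}) but not here: to keep $\widetilde\xi_i$ inside the strong-convexity region one also needs the matching \emph{upper} bound on $|h(X_i)|$ on the same index set, which the paper obtains through the net construction of Lemma~\ref{lm:thm4.3equiv} (maximal separated set, Sudakov via the Gaussian width $\omega_1$, Lipschitz contraction on blocks via $\omega_2$, and the sign-matching Lemmas~\ref{lm:lemma4.8equiv}--\ref{lm:lemma4.9equiv}). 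This two-sided control is precisely why $\omega_Q$ mixes Gaussian and Rademacher complexities, a point your proposal gestures at but does not derive. One final note in your favor: the uniformity-over-$f$ concern you raise for the interaction term is real, but the paper does not resolve it by chaining; it applies the concentration pointwise in $f$, consistent with the per-$f$ formulation of condition (c) in Assumption~\ref{as:datagenprocesscon}, so no union bound is attempted there.
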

To the best of our knowledge, the above result is the first result on understanding rates of convergence of ERM with convex loss functions with unbounded noise (as well as the loss) for heavy-tailed dependent data. The above result highlights the advantage of using a robust loss function, e.g. Huber loss, over a quadratic loss function. For example, if $(f-f^*)(X)$ has a sub-Weibull tail and the noise $\xi$ has polynomial tail, one can still obtain risk bounds with exponential probability. This is because in this case $\ell'(\xi)(f-f^*)(X)$ can still be sub-Weibull for a suitable chosen $\ell'(\xi)$. Such a situation arises, for example, when there are outliers even if the data is light-tailed. With a squared error loss, one won't be able to obtain a risk bound with exponential probability in this scenario.  We will illustrate this in Section~\ref{sec:exhuber} through Huber loss, a popular choice of robust loss function in robust statistics. Similar to the quadratic case, we also have an improved result when the small-ball condition is replaced with the norm-equivalence condition. Due to space constraints, we state and prove it in the Appendix~\ref{cor:mainthm2con}.
\vspace{-0.1in}
\section{Illustrative Examples}\label{sec:illustexp}
\vspace{-.05in}
We illustrate the results of Section~\ref{sec:mainresults} with three examples, based on sub-Weibull random variables and Pareto random variables, that are canonical models of heavy-tailed data in the literature. \vspace{-0.1in}
\subsection{Example 1:$\beta$-mixing Sub-Weibull DGP with Squared Error Loss}\label{sec:ex1}
Here, we consider sub-Weibull random variables to model the heavy-tailed behavior in the \dgp.

\begin{definition}[Sub-Weibull random vectors]\label{def:sw}
	A real-valued random variable $X$ is said to be sub-Weibull with parameter $\eta>0$, if there are constants $K_1,K_2>0$, such that we have
	$\mathbb{P}\left(|X|>t\right)\leq 2\exp\left(-(t/K_1)^\eta\right),~\text{or equivalently}~~\|X\|_p=\expec{|X|^p}{}^{1/p}\leq K_2p^\frac{1}{\eta}.$
Based on this, a random vector $X\in\mathbb{R}^d$ is said to be marginally sub-Weibull with parameter $\eta>0$ if each coordinate of $X$ is sub-Weibull with $\eta$. We use $X\sim \textsc{sw}(\eta)$ to represent this fact.
\end{definition}
The above family of distributions define a rich class of random variables,
allowing for heavier tails than sub-Gaussian tails ($\eta=2$) or sub-exponential tails ($\eta=1$).
Let $\{\delta_i\}_{i\in\mathbb{Z^+}}$ be an $\iid$ sequence of $d$-dimensional random vectors with independent coordinates with $\delta_i\sim \textsc{sw}(\eta_\delta)$. Assume that the dependent input vectors are generated according to the model
\begin{align}
	X_i=AX_{i-1}+\delta_i, \label{eq:Xdynwei}
\end{align}
where $A \in \mathbb{R}^{d\times d}$ with spectral radius less than $1$. 
For simplicity, let $A=\sigma_0^2I_d$ where $\sigma_0^2<1$,
 and $\{Y_i\}_{i\in\mathbb{Z^+}}$ be a univariate response sequence given by $Y_i={\theta^*}^\top X_i+\xi_i$, where $\theta^*\in B_1^d(R)$ belongs to the $\ell_1$-norm ball in $\mathbb{R}^d$ with the radius $R$, and $\{\xi_i\}_{i=1}^n$ is an i.i.d sequence independent of $X_i$ $\forall i$, and $\xi_i\sim \textsc{sw}(\eta_\xi)$ for $0<\eta_\xi<1$ has independent coordinates. 
To proceed with learning framework, we consider ERM with squared loss and the function class $\mathcal{F}\coloneqq \cF_R=\left\lbrace\langle\theta,\cdot\rangle:\theta\in B_1^d(R)\right\rbrace$.
We denote the difference function class $\cF_R-\cF_R$ by $\cH_R$. We show that conditions (a), (b), (c), and (d) of Assumption~\ref{as:datagenprocess}, and Assumption~\ref{as:normequiv} are satisfied, in the following section.
\subsubsection{Verification of Assumption~\ref{as:datagenprocess} for Example~\ref{sec:ex1}} 
~\cite{wang2020highdimensional} showed that the time series given by \eqref{eq:Xdynwei} is stable, strict sense stationary, with $X_i\sim \textsc{sw}(\eta_X)$, \textcolor{black}{for some $1>\eta_X>0$}. As shown in \cite{wong2020}, $\{(X_i,Y_i)\}$ is a strictly stationary sequence; thus,
we obtain that is also a $\beta$-mixing sequence with exponentially decaying coefficients as in condition (a) of Assumption~\ref{as:datagenprocess}. Now we verify the small-ball condition (b) of  Assumption~\ref{as:datagenprocess}. Let, for any $\theta=(\theta_1,\cdots,\theta_d)\neq 0$, $d_1$ denote the set of non-zero coordinates of $\theta$. W.l.o.g lets assume $T={1,2,\cdots,d_1}$. Let $\expec{X_i^2}{}=\sigma_i^2$ and $\sigma_0=\min_{1\leq i\leq {d_1}}\sigma_i$. Then $\expec{\left(\theta^\top X\right)^2}{}=\sum_{i=1}^{d_1}\theta_i^2\sigma_i^2\geq \sigma_0^2\|\theta\|_2^2.$ Since $X_i\sim SW(\eta_X)$, we have $\|\theta_iX_i\|_8\leq K_1|\theta_i|8^{\eta_X}.$ So,
\begin{align}
	\|\theta^\top X\|_8\leq K_1\|\theta\|_18^{\eta_X}\leq\frac{K_1\|\theta\|_18^{\eta_X}}{\sigma_0\|\theta\|_2}\|\theta^\top X\|_2\leq \frac{K_1\sqrt{d_1}8^{\eta_X}}{\sigma_0}\|\theta^\top X\|_2.
\end{align}
Then using Lemma 4.1 of \cite{pmlr-v35-mendelson14}, for any $0<u<1$, we have $\pro\left(\abs{\theta^\top X}\geq u\|\theta^\top X\|_{L_2}\right)\geq \left((1-u^2)/( K_1^28^{2/\eta_X+1})\right)^{4/3}.$
So condition (b) of Assumption~\ref{as:datagenprocess} is true here. This also implies that Assumption~\ref{as:normequiv} is true in this case for $p=8$. As an immediate consequence of \cite[Proposition 2.3]{vladimirova2020sub}, we have that condition (c)-(i) in Assumption~\ref{as:datagenprocess} is valid here with $\eta_2=\max(\eta_X,\eta_\xi)<1$. Since $1/\eta_2>1$, condition (d) holds true.  
\begin{proposition}\label{prop:subweirate}
	Consider the learning problem described above. Then with probability at least 
\begin{align*}
	1-\widetilde{C}_1N^rQ_\cH(2\tau)c^\frac{1}{\eta_1}\exp(-N^{(1-r)\eta_1})-\widetilde{C}_2N\exp\left(-(N^{2\iota}\tau_0)^\eta/M_1\right),
\end{align*}
we have
\vspace{-.12in}
\begin{align*}
	\|\hat{f}-f^*\|_{L_2}\leq \max\left\{\frac{2c_3R\log (ed)^\frac{1}{\eta}}{\sqrt{Q_\cH(2\tau)}c^\frac{1}{2\eta_1}}N^{-\frac{1}{2}+\iota},N^{-\frac{1}{2}+\iota}\right\}.
\end{align*}
\end{proposition}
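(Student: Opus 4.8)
The plan is to apply the general ERM convergence result (Corollary~\ref{cor:mainthm2}, the norm-equivalence version with $p=8$) to the specific sparse linear regression model of Example~\ref{sec:ex1}, and to substitute the already-computed localized Rademacher complexity bound into the generic max-expression. The work breaks into two conceptually distinct pieces: (i) confirming that \emph{all} the hypotheses of the applicable theorem hold for the model \eqref{eq:Xdynwei}, and (ii) bounding the complexity term $\omega_\mu(\cF_R-\cF_R,\tau Q_{\cH_R}(2\tau)/16)$ and comparing it against the leading $N^{-1/2+\iota}$ rate. Most of piece (i) is already carried out in the text immediately preceding the proposition, so the proof is largely a matter of assembling these verifications and then controlling the Rademacher term.

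\textbf{Step 1 (collecting the structural verifications).} First I would invoke the cited facts that, under \eqref{eq:Xdynwei} with $A=\sigma_0^2 I_d$ and $\sigma_0^2<1$, the process $\{X_i\}$ is strictly stationary and marginally $\textsc{sw}(\eta_X)$ \cite{wang2020highdimensional}, and hence $\{(X_i,Y_i)\}$ is strictly stationary and exponentially $\beta$-mixing \cite{wong2020}, establishing condition (a). The small-ball/norm-equivalence computation in the preamble shows $\|\theta^\top X\|_8 \le (K_1\sqrt{d_1}\,8^{\eta_X}/\sigma_0)\|\theta^\top X\|_2$, which gives both condition (b) and Assumption~\ref{as:normequiv} with $p=8$. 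Condition (c)-(i) follows from \cite[Proposition 2.3]{vladimirova2020sub} with $\eta_2=\max(\eta_X,\eta_\xi)<1$, and since $1/\eta_2>1$ we get $1/\eta=1/\eta_1+1/\eta_2>1$, i.e. condition (d). Thus the hypotheses of Corollary~\ref{cor:mainthm2} are met.

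\textbf{Step 2 (the complexity term).} The heart of the proof is bounding $\omega_\mu$. For the $\ell_1$-ball function class, I would use Lemma~\ref{lm:expecwstarsum} to control the expected maximal coordinate of the sub-Weibull sum, yielding (via the two-regime bound already displayed in the excerpt) that $\omega_\mu(\cF_R-\cF_R,\tau Q_{\cH_R}(2\tau)/16) \le (c_3 R/\sqrt{\mu})(\log ed)^{1/\eta}$ in the relevant regime $\mu\le c_1 d$. The key substitution is $\mu = N^r Q_\cH(2\tau)c^{1/\eta_1}/4$ with the choice $r=1-2\iota$, so that $1/\sqrt{\mu}$ scales like $N^{-(1-2\iota)/2}=N^{-1/2+\iota}$ (up to the $Q_\cH$ and $c$ factors), giving the first argument of the max, while the generic rate from Corollary~\ref{cor:mainthm2} supplies $N^{-1/2+\iota}$ as the second argument. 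Plugging these into \eqref{eq:errorboundalphaxcor} and reading off the probability expression from Corollary~\ref{cor:mainthm2} completes the bound.

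\textbf{The main obstacle} I anticipate is the complexity calculation in Step 2: one must carefully track how the blocking parameter $\mu$ enters, verify that the chosen exponent $r=1-2\iota$ makes the Rademacher term match (rather than dominate) the parametric $N^{-1/2+\iota}$ rate, and check that the regime condition $\mu\le c_1 d$ is the operative one for the high-dimensional sparse setting being advertised. A subtler point is ensuring the $\iota$-range is consistent: Corollary~\ref{cor:mainthm2} permits $0<\iota<1/2$, and one must confirm the complexity exponent and the probability terms remain valid throughout this range. The remaining manipulations — substituting constants, simplifying the exponent of $N$ in the two arguments of the $\max$, and transcribing the probability bound — are routine and follow the template of \cite[Lemma 4.6]{pmlr-v35-mendelson14}.
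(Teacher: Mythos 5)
Your proposal is correct and follows essentially the same route as the paper: verify conditions (a), (b), (c)-(i), (d) and the $L_8$--$L_2$ norm equivalence as in Section~\ref{sec:ex1}, bound $\omega_\mu(\cF_R-\cF_R,\tau Q_{\cH_R}(2\tau)/16)$ via Lemma~\ref{lm:mend6pt4} and Lemma~\ref{lm:expecwstarsum} following the two-regime argument of \cite[Lemma 4.6]{pmlr-v35-mendelson14}, and then substitute $\mu=N^{r}Q_\cH(2\tau)c^{1/\eta_1}/4$ with $r=1-2\iota$ into the norm-equivalence corollary. Your choice of Corollary~\ref{cor:mainthm2} as the driving result is the right one (it is what produces both the $N^{-1/2+\iota}$ rate and the probability term containing $M_1$), even though the paper's proof text nominally cites Theorem~\ref{th:mainthm2} at that step.
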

The proof of Proposition~\ref{prop:subweirate} could be found in the Appendix~\ref{p:4.1}.
\begin{remark}
 In a related setting (i.e., assuming $\theta^*$ is exactly $s$-sparse)~\cite[Corollary 9]{wong2020} presents parameter estimation error which is of the same order as $\|\hat{f}-f^*\|_{L_2}$ (indeed, for simplicity $R$ could be thought of being at the same order as $s$) since we assume $X$ has finite variance. So with slightly better probability guarantee, we recover the same rate ($\iota$ can be arbitrarily close to 0) as \cite[Corollary 9]{wong2020} in the above proposition. 
\end{remark}

\vspace{-.05in}
\subsection{Example 2: $\beta$-mixing Pareto DGP with Squared Error
Loss}\label{sec:examplepoly}
\vspace{-.05in}
Let $\widetilde{X}_{t,i}$ denote the $i$-th coordinate of the vector $\widetilde{X}_t\in\mathbb{R}^d$. We consider the process given in~\cite{pillai1991semi}: For $i=1,2,\cdots,d$, $\eta_3> 2+2\iota$, where $\iota>0$ is a small number, and $t=0,1,\cdots$, define
\[   
\widetilde{X}_{t,i} = 
\begin{cases}
	2^\frac{1}{\eta_3}\widetilde{X}_{t-1,i} &\quad\text{with probability  }1/2\\
	\min\left(2^\frac{1}{\eta_3}\widetilde{X}_{t-1,i},\delta_{t,i}\right)&\quad\text{with probability  }1/2\numberthis\label{eq:arsp}
\end{cases}
\]
where $\{\delta_{t,i}\}_{i=1,2,\cdots,d, t=1,2,\cdots}$ is a sequence of $\iid$ Pareto random variables with the distribution $L_+(\delta;\eta_3,d_i)=\eta_3(d_i\delta)^{\eta_3-1}/\left(1+(d_i\delta)^{\eta_3}\right)^2$ for $\delta>0, \eta_3>2+2\iota$, and we write $X\sim L_+(\eta,\sigma)$ to denote that $X$ is a Pareto random variable with parameters $\eta$ and $\sigma$. The survival function of $\delta\sim L_+(\eta_3,d)$ is given by, $\mathbb{P}(\delta>t)=(1+(dt)^{\eta_3})^{-1}$, for $t>0$.
	Let $\widetilde{X}_{1,t}$ and $\widetilde{X}_{2,t}$ be two independent trails of the process in \eqref{eq:arsp}. Let $\{U_t\}_{t=0,1,\cdots}$ be a sequence of $\iid$ $U[0,1]$ random variables. Now consider the process ${X}_{t,i} = \widetilde{X}_{1,t,i}\mathbbm{1}(U_t\leq 1/2)-\widetilde{X}_{2,t,i}\mathbbm{1}(U_t> 1/2)$.
	The marginal distribution of $X_{t,i}$ is given by symmetric Pareto distribution, i.e.,
\begin{align}
	L(x;\eta_3,d_i)=\frac{\eta_3(d_i|\delta|)^{\eta_3-1}}{2\left(1+(d_i|\delta|)^{\eta_3}\right)^2} \quad -\infty<\delta<\infty, \eta_3>2+2\iota.
\end{align} 
	  Now, let $\{Y_i\}_{i\in\mathbb{Z^+}}$ be a sequence given by $Y_i={\theta^*}^\top X_i+\upsilon_i$, where $\theta^*\in B_1^d(R)$, as before. Let $\{\upsilon_i\}_{i=1}^n$ be an $\iid$ sequence of $0$ mean random variables independent of $X_i$ for all $i$, with heavy tails such that for all $t>0$, $\mathbb{P}\left(|\upsilon_{i}|\geq t\right)\leq 1/(1+t^{\eta_4})$, $\eta_4>2+2\iota$.
Like in Example~\ref{sec:ex1}, we consider ERM with squared loss and the function class $\mathcal{F}\coloneqq \cF_R=\left\lbrace\langle\theta,\cdot\rangle:\theta\in B_1^d(R)\right\rbrace$,
where $B_1^d(R)$ denotes the $d$-dimensional $\ell_1$-ball with radius $R$.
We denote the difference function class $\cF_R-\cF_R$ by $\cH_R$. We show that conditions (a), (b), and (3)-(ii) of Assumption~\ref{as:datagenprocess} hold here in the following section. 
\subsubsection{Verification of Assumption~\ref{as:datagenprocess} for Example~\ref{sec:examplepoly}}\label{sec:verifypareto}
	\cite{pillai1991semi} shows that the AR(1) process given by \eqref{eq:arsp} is strictly stationary if $\tilde{X}_{0,i}\sim L_+(\eta_3,d_i)$, and $\tilde{X}_{0,i}$ are independent of each other for $i=1,2,\cdots,d$. The stationary distribution is given by $L_+(\eta_3,d_i)$.
	Let $\pi(X_{t_1},X_{t_2},\cdots,X_{t_n})$ be the joint distribution of $X_{t_1},X_{t_2},\cdots,X_{t_n}$ for a set of time points $t_1,t_2,\cdots,t_n$. Now for any positive integer $k$, 
	\begin{align*}
		&\pi(\{X_{t_i+k}\}_{i=1}^n)
		=\pi(\{\tilde{X}_{1,t_i+k}\mathbbm{1}(U_{t_i+k}\leq 1/2)-\tilde{X}_{2,t_i+k}\mathbbm{1}(U_{t_i+k}> 1/2)\}_{i=1}^n)\\
		=&\pi(\{\tilde{X}_{1,t_i}\mathbbm{1}(U_{t_i}\leq 1/2)-\tilde{X}_{2,t_i}\mathbbm{1}(U_{t_i}> 1/2)\}_{i=1}^n)
		=\pi(\{X_{t_i}\}_{i=1}^n).   \numberthis\label{eq:strictstation}
	\end{align*}
The second equality above follows from the fact $\tilde{X}_{1,t}$ and $\tilde{X}_{2,t}$ are strictly stationary process and $\{U_t\}$ is $\iid$. So $X_t$ is a strictly stationary process with marginal distribution of $X_{t,i}$ given by symmetric Pareto distribution
\begin{align}
	L(x;\eta_3,d_i)=\frac{\eta_3(d_i|\delta|)^{\eta_3-1}}{2\left(1+(d_i|\delta|)^{\eta_3}\right)^2} \quad -\infty<\delta<\infty, \eta_3>2+2\iota.
\end{align} 
Without loss of generality we will assume that $d_1\leq d_2\leq\cdots\leq d_d$, $\sum_{i=1}^d1/d_i=K_0$, and $d_1\geq C_6'$ for some constants $K_0,C_6'>0$. It is shown in Lemma 1 of \cite{ristic2008generalized}, that the AR(1) process in \eqref{eq:arsp} is $\phi$-mixing with $\phi(k)=k\log 2/(2^k-1)$, $k=1,2,\cdots$.
where $\phi$-mixing coefficients are defined as in \cite{bradley2005basic}. We also have 
$\beta(k)\leq \phi(k)\leq k\log 2/(2^k-1) \leq e^{-k/3}.$
Since $X_t$ depends only on $X_{1,t}$, $X_{2,t}$, and $U_t$, $X_{1,t}$, and $X_{2,t}$ are independent and exponentially $\beta$-mixing, and $U_t$ is $\iid$, $X_t$ is also exponentially $\beta$-mixing.
\begin{itemize}
\setlength{\itemindent}{-0.2mm}
	\item Since $Y_i$ depends only on $X_i$, and $\upsilon_i$ are $\iid$, $\{(X_i,Y_i)\}$ is a strictly stationary $\beta$-mixing sequence with $\beta(k)\leq e^{-k/3}$. So condition (a) of Assumption~\ref{as:datagenprocess} is true here. 
	\item Now we will verify condition (b) of Assumption~\ref{as:datagenprocess}. Let $\sigma_{X,p}$ denote $\|X\|_p$ for $p>0$. Then
	\begin{align}
		\expec{\left(\theta^\top X\right)^2}{}=\sum_{i=1}^{d}\frac{\theta_i^2\sigma_{X_i,2}^2}{d_i^2}\geq \sigma_0^2\sum_{i=1}^{d}\frac{\theta_i^2}{d_i^2},
	\end{align}
where $\sigma_0=\min_i\sigma_{X_i,2}$, $i=1,2,\cdots,d$.
	Since $X_i\sim L(\eta_3)$, for any $\theta_i\in \mathbb{R}$ we have
	$\|\theta_iX_i\|_{\eta_3-0.5\iota}\leq K_1|\theta_i|\sigma_{X,\eta_3-0.5\iota}/d_i$.
	So,
	\begin{align}
		&\|\theta^\top X\|_{\eta_3-0.5\iota}\leq K_1\sigma_{X,\eta_3-0.5\iota}\sum_{i=1}^{d}\frac{|\theta_i|}{d_i}\leq\frac{K_1\sigma_{X,\eta_3-0.5\iota}\sum_{i=1}^{d}\frac{|\theta_i|}{d_i}}{\sigma_0\sqrt{\sum_{i=1}^{d}\frac{\theta_i^2}{d_i^2}}}\|\theta^\top X\|_2\\
		\leq& \frac{K_1\sigma_{X,\eta_3-0.5\iota}\sqrt{d}}{\sigma_0}\|\theta^\top X\|_2 .
	\end{align}
	Then from Lemma 4.1 of \cite{pmlr-v35-mendelson14} we have that the condition (b) of Assumption~\ref{as:datagenprocess} is true here. 
	\item Since $\upsilon_i$ and $X_i$ are independent, and $\expec{\upsilon_{i}}{}=0$, we have $\expec{\upsilon_it^\top X_i}{}=0$. Let $\eta_2=\min(\eta_3,\eta_4)$. Then using Markov's inequality, for any $t\in\cH_R$ and $\forall i$, we have,
	\begin{align*}
		\mathbb{P}\left(\abs{\upsilon_it^\top X_i-\expec{\upsilon_it^\top X_i}{}}\geq \tau\right)
		\leq  \frac{\|\upsilon_i\|_{\eta_2}^{\eta_2}\left(\sum_{j=1}^d\|t_j X_{i,j}\|_{\eta_2}\right)^{\eta_2}}{\tau^{\eta_2}}
		\leq \frac{\left(R\sigma_\upsilon\sigma_{X,\eta_2}d_H\right)^{\eta_2}}{\tau^{\eta_2}} ,
	\end{align*}
	where $d_H=\sum_{i=1}^dd_j^{-1}$, and for all $i,j$, $\|\upsilon_i\|_{\eta_2}=\sigma_\upsilon$, and $\|X_{i,j}\|_{\eta_2}=\sigma_{X,\eta_2}/d_j$. This implies that condition (c)-(ii) of Assumption~\ref{as:datagenprocess} is true in this setting.
\end{itemize}
\begin{proposition}\label{prop:paretorate}
	Consider the learning problem described in Section~\ref{sec:examplepoly}. Then with probability at least \eqref{eq:thm10probpoly}, we have,
	\begin{align*}
		\|\hat{f}-f^*\|_{L_2} \leq \max\left\{N^{-\frac{1}{4}\left(1-\frac{1}{\eta_2}\right)+\iota},\frac{C_9R}{\tau Q_{\cH_R}(2\tau)^{3/2}}d^{1/(\eta_2-0.5\iota)+\iota/8}N^{-1/2+\iota}\right\}. \numberthis\label{eq:errorboundalphapareto}
	\end{align*}
\end{proposition}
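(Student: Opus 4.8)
The plan is to obtain Proposition~\ref{prop:paretorate} as a direct specialization of part~2 of Theorem~\ref{th:mainthm2} to the Pareto data-generating process of Section~\ref{sec:examplepoly}. The three hypotheses required by that part of the theorem, namely conditions (a), (b) and (c)-(ii) of Assumption~\ref{as:datagenprocess}, have already been established in Section~\ref{sec:verifypareto}, with $\eta_2=\min(\eta_3,\eta_4)$. Hence \eqref{eq:errorboundalphapoly} applies verbatim with $\cF=\cF_R$, yielding, on the event of probability at least \eqref{eq:thm10probpoly},
\[
\|\hat f - f^*\|_{L_2} \le \max\left\{ N^{-\frac14(1-1/\eta_2)+\iota},\ \omega_\mu\bigl(\cF_R-\cF_R,\tfrac{\tau Q_{\cH_R}(2\tau)}{16}\bigr) \right\}.
\]
The first entry of this maximum already coincides with the first argument in the claimed bound, so the entire proof reduces to showing that the localized Rademacher complexity $\omega_\mu(\cF_R-\cF_R,\tau Q_{\cH_R}(2\tau)/16)$ is of order $\frac{C_9 R}{\tau Q_{\cH_R}(2\tau)^{3/2}}\,d^{1/(\eta_2-0.5\iota)+\iota/8}\,N^{-1/2+\iota}$.

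I would compute this complexity term along the route taken in Example~\ref{sec:ex1}, i.e.\ mirroring the proof of Lemma~4.6 in~\cite{pmlr-v35-mendelson14}, but replacing the sub-Weibull moment control by the Pareto moment control of Section~\ref{sec:examplepoly}. Because every $h\in\cH_R$ is linear, $h(\widetilde X_i)=\theta^\top\widetilde X_i$ with $\theta\in B_1^d(2R)$, and after rescaling by the covariance the set $\cH_R\cap r\cD$ reduces to one of the form $\sqrt{k}\,B_1^d\cap B_2^d$; Lemma~6.4 of~\cite{pmlr-v35-mendelson14} then bounds the empirical process inside the infimum in~\eqref{eq:localrad} by $\E[(\sum_{i=1}^k (w_i^*)^2)^{1/2}]$, where $(w_i^*)$ is the decreasing rearrangement of the rescaled coordinate magnitudes of one $\widetilde X$. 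The only genuinely new ingredient is the heavy-tailed analogue of the sub-Weibull order-statistics estimate: since only moments of order $p<\eta_3$ are finite, applying the moment method $\E[\max_{j\le d}|X_j|]\le(\sum_j \E|X_j|^{p})^{1/p}=d^{1/p}\|X\|_p$ at the truncated exponent $p=\eta_2-0.5\iota$ produces a polynomial factor $d^{1/(\eta_2-0.5\iota)}$ in place of the $(\log ed)^{1/\eta}$ of the light-tailed case. Carried through the same dichotomy on $(R/r)^2$ versus $d$, this gives a two-branch bound whose nontrivial branch reads $\omega_\mu(\cF_R-\cF_R,\tau Q_{\cH_R}(2\tau)/16)\lesssim \frac{R}{\tau Q_{\cH_R}(2\tau)\sqrt{\mu}}\,d^{1/(\eta_2-0.5\iota)}$, the factor $\tau^{-1}Q_{\cH_R}(2\tau)^{-1}$ arising from unfolding the radius argument $\gamma=\tau Q_{\cH_R}(2\tau)/16$ in Definition~\ref{def:omegadef}.

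Finally I would substitute the prescribed block count. Taking $r=1-2\iota$ as in Example~\ref{sec:ex1} makes $\mu=N^rQ_{\cH_R}(2\tau)c^{1/\eta_1}/4$ of order $Q_{\cH_R}(2\tau)N^{1-2\iota}$, so $1/\sqrt{\mu}\asymp Q_{\cH_R}(2\tau)^{-1/2}N^{-1/2+\iota}$; this contributes the extra half-power of $Q_{\cH_R}(2\tau)$ that upgrades $Q_{\cH_R}(2\tau)^{-1}$ to $Q_{\cH_R}(2\tau)^{-3/2}$ and supplies the rate factor $N^{-1/2+\iota}$, reproducing exactly the second argument of the maximum in the statement up to the residual $d^{\iota/8}$, which absorbs the slack between the integrability threshold $\eta_2$ and the truncated exponent $\eta_2-0.5\iota$ together with the order-statistics constants. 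The failure probability is inherited unchanged from \eqref{eq:thm10probpoly}.

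I expect the main obstacle to be the heavy-tailed order-statistics estimate. Unlike the sub-Weibull case, where the maximum of $d$ coordinates costs only a logarithmic factor, the Pareto tails force a genuinely polynomial dependence $d^{1/(\eta_2-0.5\iota)}$, and care is needed in selecting the truncated moment exponent just below the integrability threshold $\eta_2=\min(\eta_3,\eta_4)$ so that the relevant moments stay finite while the dimension factor remains as tight as possible; this is precisely the source of the slightly awkward exponent $1/(\eta_2-0.5\iota)+\iota/8$ in the final rate.
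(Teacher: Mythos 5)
Your proposal correctly identifies the outer structure (invoke part 2 of Theorem~\ref{th:mainthm2}, then bound $\omega_\mu(\cF_R-\cF_R,\tau Q_{\cH_R}(2\tau)/16)$ via Lemma 6.4 of \cite{pmlr-v35-mendelson14} and the dichotomy on $(2R/s)^2$ versus $d$), but it mis-identifies the object whose order statistics must be controlled, and this is a genuine gap rather than a notational slip. After the rewriting for the linear class, the expectation inside Definition~\ref{def:omegadef} reduces to $\mu^{-1/2}\,\E\sup_{t}\langle w,t\rangle$ with $w=\mu^{-1/2}\sum_{i=1}^{\mu}\epsilon_i\widetilde{X}_i$, so Lemma 6.4 yields $\E[(\sum_{j\le k}(w_j^*)^2)^{1/2}]$ for the decreasing rearrangement of the coordinates of this \emph{normalized sum over the $\mu$ blocks}, not ``of one $\widetilde X$'' as you write. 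In the sub-Gaussian and sub-Weibull settings this distinction is harmless because those tail classes are stable under normalized summation, which is why Example~\ref{sec:ex1} can quote a single-variable moment bound; for Pareto marginals it is exactly the crux of the problem. Your moment-method step $\E[\max_{j\le d}|X_j|]\le d^{1/p}\|X\|_p$ applied to a single sample never interacts with the summation over $i$, so it does not bound the Rademacher complexity: the tail of $\mu^{-1/2}\sum_i X_{i,j}$ is not that of a Pareto variable but a mixture of a heavy-tail (``one big jump'') part and a CLT part, and asserting that it behaves like a single coordinate is precisely the nontrivial claim that needs proof.

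The paper fills this hole with two lemmas for which your plan has no counterpart: Lemma~\ref{lm:w1starconc}, which applies the Fuk--Nagaev-type inequality of \cite{chesneau2007tail} to obtain the two-regime tail bound $\pro(|w_j|\ge t)\le C_3(d_j^{\eta_3-2p-1}\mu^{1-\eta_3/2}t^{\eta_3-2p}+d_j^{-2}t^{-p})$ with $p=\eta_3-0.5\iota$, and Lemma~\ref{lm:expecwstarsumpoly}, which integrates this tail and takes a union bound over the $d$ coordinates to control $\E[(\sum_{j\le k}(w_j^*)^2)^{1/2}]$. Relatedly, your interpretation of the $d^{\iota/8}$ factor is not what happens in the paper: it absorbs the $\mu$-dependent Fuk--Nagaev term and the constants involving $d_1$ (under $d_1\ge C_6'$ and $\sum_i 1/d_i=K_0$), not merely ``slack'' between $\eta_2$ and the truncated exponent. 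A repair of your argument must supply some inequality for sums of heavy-tailed variables before the moment method is applied: for instance, Rosenthal's inequality gives $\|w_j\|_p\le C_p(\|X_{1,j}\|_{2}+\mu^{1/p-1/2}\|X_{1,j}\|_p)$, uniformly bounded in $\mu$ for $p=\eta_3-0.5\iota$, after which your union-bound computation can be run on $w$ instead of $\widetilde X$ and would recover the same $d^{1/p}$ dimension dependence. Without such a step the proof does not go through.
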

The proof of Proposition~\ref{prop:paretorate} could be found in the Appendix~\ref{p:prop4.2}.
\begin{remark}
Note that the heaviness of the tail dominates the exponential $\beta$-mixing rate $\eta_1$ in determining the rates of convergence. Observe that as $\eta_2\to\infty$ all the moments exist, and $N^{-(1-1/\eta_2)/4+\iota}\to N^{-1/4+\iota}$. This rate is the same as the one that we obtain under condition (c)-(i), although with weaker polynomial probability. Furthermore, the dimension dependency is polynomial here. On a related note, \cite{zhang2018ell_1} analyzes $\ell_1$-regression with a truncated loss in the $\iid$ setting with the assumption $\eta_2>2$ and gets $\sqrt{d/N}$ rate with exponential probability. We point out that in the $\iid$ setting Medians-of-mean method achieves the optimal rate with exponential probability under the stronger assumption that $\log d$ moments exist \cite{lugosi2019regularization}.
\end{remark}
\subsection{Example 3: $\beta$-mixing sub-Gaussian data and Pareto noise with Huber Loss}\label{sec:exhuber}
In this example, we consider Huber loss which satisfies Assumption~\ref{as:lscgc}:
\begin{align*}
    \ell_{T_h}(t)=
    \begin{cases}
        t^2/2 & \text{if} \abs{t}\leq T_h\\
        T_h\abs{t}-T_h^2/2 & \text{if} \abs{t}\geq T_h\,.
    \end{cases}
\end{align*}
 We now formally establish the benefits of using Huber loss when the noise $\xi$ has a polynomial tail but $(f-f^*)(X)$ has a sub-Weibull tail. Let $\{\delta_i\}_{i\in\mathbb{Z^+}}$ be an $\iid$ sequence of $d$-dimensional standard Gaussian random vectors $\delta_i\sim N(0,I_d)$. 
 To compare the performance of ERM under Huber loss to that of squared loss,
 for simplicity, we allow $X$ to be Gaussian; but a similar result will hold for sub-Weibull $\delta_i$. Assume that the input vectors are generated according to \eqref{eq:Xdynwei} where $A$ is a $d\times d$ matrix with spectral radius less than $1$. For simplicity, let $A=\sigma_0^2I_d$ where $\sigma_0^2<1$. ~\cite{wang2020highdimensional} showed that this time series is stable, strict sense stationary, with $X_i\sim N(0,1/(1-\sigma_0^4)I_d)$.  Let $\{Y_i\}_{i\in\mathbb{Z^+}}$, be the response sequence given by $Y_i={\theta^*}^\top X_i+\xi_i$, where $\theta^*\in B_1^d(R)$, $\ell_1$-norm ball in $\mathbb{R}^d$. Let $\{\xi_i\}_{i=1}^n$ be independent of $X_i$ for all $i$, and be an $\iid$ sequence of $0$ mean random variables with heavy tails such that for all $t>0$, $\mathbb{P}\left(|\xi_{i}|\geq t\right)\leq 1/(1+t^{\eta_4})$, $\eta_4>2+2\iota$ and $\textsc{var}(\xi)\leq \sigma_\xi$. Set $T_h=3\sigma_\xi$. We verify the required assumptions for this example in the following section.
 \subsubsection{Verification of Assumptions for Example~\ref{sec:exhuber}}\label{sec:verifyhuber}
Condition (a) of Assumption~\ref{as:datagenprocess} and condition (b) of Assumption~\ref{as:datagenprocesscon} are true here by the same argument as in Example~\ref{sec:ex1}. This also implies that Assumption~\ref{as:normequiv} is true in this case for $p=8$. Since we consider Huber loss, a lipschitz continuous loss, we have $l'(\xi)\leq \min(|\xi/2|,T_h)$. Set $T_h=c_{11}(\kappa_0+1/\sqrt{\epsilon})(\sigma_\xi+2R)$. As an immediate consequence of \cite[Proposition 2.3]{vladimirova2020sub}, we have that condition (c)-(i) in Assumption~\ref{as:datagenprocesscon} is valid here with $\eta_2=2$. Note that if a sequence is exponentially $\beta$-mixing, i.e., satisfies condition (a) of Assumption~\ref{as:datagenprocess} with coefficient $\eta_1>0$, then the same condition is true for all $\eta<\eta_1$. So choosing $\eta_1'<\eta_2/(\eta_2-1)$ we get Condition (d) of Assumption~\ref{as:datagenprocess} is true. 
\begin{proposition}\label{prop:huberrate}
	Consider the learning problem described in Section~\ref{sec:exhuber}. Then, for some constant $c_{12}>0$, with probability at least 
\begin{align*}
	1-c_9\epsilon^{1-\frac{1}{\eta_1}} N^{\eta_1/(1+\eta_1)}e^{-c_{10}\epsilon^{1+\frac{1}{\eta_1}}N^{\eta_1/(1+\eta_1)}}-\widetilde{C}_2N\exp\left(-(N^{2\iota}\tau_0)^\eta/M_1\right),
\end{align*}
we have
\vspace{-.12in}
\begin{align*}
	\|\hat{f}-f^*\|_{L_2}\leq \max\left\{N^{-\frac{1}{2}+\iota},c_{12}R\sqrt{\log(ed/N)}N^{-\frac{1}{2}}\right\}.
\end{align*}
\end{proposition}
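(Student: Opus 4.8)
The plan is to read this proposition as a direct instantiation of the convex-loss machinery for the sparse linear class $\cF_R=\{\langle\theta,\cdot\rangle:\theta\in B_1^d(R)\}$, and then to reduce everything to an explicit complexity computation. All the structural hypotheses have already been checked in Section~\ref{sec:verifyhuber}: condition (a) of Assumption~\ref{as:datagenprocess}, the small-ball property together with the $L_8$--$L_2$ norm-equivalence (Assumption~\ref{as:normequiv} with $p=8$), condition (c)-(i) of Assumption~\ref{as:datagenprocesscon} with $\eta_2=2$, and the trade-off condition (d). Because norm-equivalence holds, I would invoke not the generic Theorem~\ref{th:mainthm2con} but its norm-equivalence strengthening, Corollary~\ref{cor:mainthm2con}, which upgrades the leading term from $N^{-1/4+\iota}$ to $N^{-1/2+\iota}$ and delivers verbatim the probability statement quoted in the proposition (with the small-ball parameter $Q_{\cH}(2\tau)$ playing the role of $\epsilon$ and the multiplier term controlled through $M_1$). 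With the rate and the probability thus imported, only the complexity term $\omega_Q(\cF_R-\cF_R,N,\zeta_1,\zeta_2)$ remains to be estimated.

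The conceptual heart of the example is explaining why we stay in the exponential-probability regime despite the noise $\xi$ being only polynomially tailed ($\eta_4>2+2\iota$). This is the Lipschitzness of the Huber loss: $|\ell_{T_h}'(\xi)|\le\min(|\xi|/2,T_h)$ is uniformly bounded, while $(f-f^*)(X)$ is Gaussian since $X$ is Gaussian; hence the interaction $\ell_{T_h}'(\xi)(f-f^*)(X)$ is sub-Gaussian, i.e. condition (c)-(i) with $\eta_2=2$. This is exactly the advantage flagged after Theorem~\ref{th:mainthm2con}: had we used the squared loss, $\ell'(\xi)=2\xi$ would inherit the Pareto tail of $\xi$, pushing us into condition (c)-(ii) and the weaker polynomial-probability guarantee of Proposition~\ref{prop:paretorate}. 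I would state this clipping argument first, as it is what justifies using part~1 (rather than part~2) of the convex-loss result.

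The remaining work is to bound $\omega_Q=\max\{\omega_1(\cH_R,N,\zeta_1),\omega_2(\cH_R,\mu,\zeta_2)\}$, where $\omega_2(\cH_R,\mu,\zeta_2)=\omega_\mu(\cH_R,\zeta_2)$ is a localized Rademacher average and $\omega_1$ is the localized Gaussian-width fixed point, whose normalization $N^{\eta_1/(2(1+\eta_1))}$ equals $\sqrt{\mu}$ with $\mu=N^{\eta_1/(1+\eta_1)}$. Both reduce to estimating $\expec{\sup_{h\in\cH_R\cap r\mathcal{D}}|\tfrac1m\sum_i\epsilon_i h(\widetilde X_i)|}{}$ (and its Gaussian analogue) over $\cH_R\subset 2R\,B_1^d$. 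Following the route used in Example~\ref{sec:ex1}, I would pass through Lemma~6.4 of~\cite{pmlr-v35-mendelson14} to reduce the supremum over $\sqrt{k}B_1^d\cap B_2^d$ to $\expec{(\sum_{i=1}^k{w_i^*}^2)^{1/2}}{}$, then apply the sub-Weibull expectation bound with $\eta=2$ (Gaussian $X$) to obtain an $\order(\sqrt{k}\,\sqrt{\log(ed)})$ estimate. Solving the fixed-point equations for the sparse $\ell_1$-ball and substituting $\mu=N^{\eta_1/(1+\eta_1)}$ gives a term of order $R\sqrt{\log(ed/N)}\,N^{-1/2}$, which truncates to zero once the effective sample size exceeds $c_1 d$; taking the maximum against $N^{-1/2+\iota}$ then yields the displayed bound.

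The main obstacle is precisely this complexity calculation. The delicate points are: (i) obtaining the correct localized Gaussian width of the $\ell_1$-ball, whose sparse geometry is what produces the refined $\log(ed/N)$ factor rather than a crude $\log(ed)$; (ii) reconciling the $\sqrt{\mu}$-normalized fixed point of $\omega_1$ with the $\mu$-block Rademacher fixed point of $\omega_2$ and checking that neither dominates beyond the claimed order; and (iii) the exponent bookkeeping that collapses the effective-sample-size normalization into the stated $N^{-1/2}$ scaling. Everything else—most importantly the probability statement—transfers directly from Corollary~\ref{cor:mainthm2con} once the assumptions are in place.
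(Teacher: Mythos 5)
Your proposal follows the paper's own proof almost exactly: the paper likewise relies on the verification in Section~\ref{sec:verifyhuber} (Huber clipping makes $\ell'(\xi)(f-f^*)(X)$ sub-Gaussian, so condition (c)-(i) holds with $\eta_2=2$), invokes Corollary~\ref{cor:mainthm2con} for both the rate and the probability statement, and reduces the remaining work to bounding $\omega_Q$ for the sub-Gaussian linear class $\cF_R$. The only divergence is how that complexity bound is obtained: the paper simply imports the localized estimate for $L_g$-sub-Gaussian classes (citing the corresponding computation in \cite{pmlr-v35-mendelson14}), which directly yields the refined factor $\sqrt{\log(ed/N)}$, whereas you propose re-deriving it via Lemma 6.4 of \cite{pmlr-v35-mendelson14} combined with the sub-Weibull moment bound at $\eta=2$. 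As literally stated, that chain gives the coarse estimate $\sqrt{k\,\log(ed)}$, and no fixed-point bookkeeping will convert $\log(ed)$ into $\log(ed/N)$; to recover the refined factor you must retain the per-coordinate estimate $\E\left[{(w_j^*)}^2\right]\lesssim \log(ed/j)$ (which the paper's Lemma~\ref{lm:expecwstarsum} actually proves before coarsening it to $\log(ed)$) and only then sum over $j\leq k$ to get $\sqrt{k\,\log(ed/k)}$. Since you explicitly flag this as your delicate point (i), it is a refinement to execute rather than a missing idea, and with it in place your argument coincides with the paper's; your point (iii) about reconciling the $\sqrt{\mu}$ normalization of Definition~\ref{def:omegadefcon} with the $N^{-1/2}$ scaling in the stated bound is likewise a genuine subtlety, one the paper's own proof glosses over by writing the complexity bound directly with $\sqrt{N}$ in the denominator.
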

The proof of Proposition~\ref{prop:huberrate} could be found in the Appendix~\ref{p:huber}.
\begin{remark}
In this setting, using squared loss would mean that condition (c)-(ii) of Assumption~\ref{as:datagenprocess} is true. So, by part 2 of Theorem~\ref{th:mainthm2}, the obtained rate would be of order $N^{-1/8}$ with polynomial probability given by \eqref{eq:thm10probpoly}, which is significantly worse than that of Proposition~\ref{prop:huberrate} with Huber loss.
\end{remark}

\section{Proof Sketch of Theorem~\ref{th:mainthm2} and~\ref{th:mainthm2con}}
Recall the decomposition \eqref{eq:decomp}. The first and the last terms in the RHS of \eqref{eq:decomp} are handled respectively by condition (b) and (c) of Assumption~\ref{as:datagenprocess}. The basic idea is to show that if for some $f\in\cF$, $\|f-f^*\|_{L_2}$ is large, then with high probability $T_1:=N^{-1}\sum_{i=1}^N(f-f^*)^2(X_i)\geq \underbar{B}$ (Lemma~\ref{lm:th5.3eqv}) and  $T_2:=2N^{-1}\sum_{i=1}^N\xi_i(f-f^*)(X_i)\geq \bar{B}$ (see \eqref{eq:noiseconcalpha}) where $\underbar{B}+\bar{B}>0$. But since $\hat{f}$ minimizes $P_NL_f$ and $f^*\in\cF$, $P_NL_f\leq 0$. So with high probability $\|\hat{f}-f^*\|_{L_2}$ is small. In contrast to~\cite{pmlr-v35-mendelson14}, we face two major challenges: 1- For the lower bound on $T_1$, the symmetrization argument used in the $\iid$ case (e.g. \cite{pmlr-v35-mendelson14,mendelson2018learning}) is not applicable under our dependency structure. 2- For the term $T_2$, \cite{pmlr-v35-mendelson14, mendelson2018learning} use the complexity measure $\alpha_N^*(\gamma,\delta)$ (see~\eqref{eq:alphadef}) in the $\iid$ case to control the noise-input interactions, which is not possible to do in our setting; our analysis to control $T_2$ is different, through which we can show how different tail conditions on the data and noise affect the high-probability statements on the learning rate. The proof of Theorem~\ref{th:mainthm2con} for the locally strongly-convex loss functions, follows a similar strategy. However, the technical details become more involved.

\section{Conclusion}
In this work, we analyzed the performance of empirical risk minimization with squared error and convex loss functions, when
the \dgp is both dependent (specifically, exponentially $\beta$-mixing) and heavy-tailed. We derived explicit rates using a combination of small-ball method and  concentration inequalities. We demonstrated the applicability of our results on a high-dimensional linear regression problem, and showed that our assumptions are easily verified for a certain classes of sub-Weibull and Pareto \dgp. Our results clearly show
the benefits of using Huber loss over the squared error loss for ERM with heavy-tailed data in our setting.
For future work, 
we plan to study median-of-means based techniques and examine establishing similar rates of convergence for dependent heavy-tailed data.
\bibliographystyle{amsalpha}
\bibliography{stein}
\appendix




\appendix

\section{Proof of Lemma~\ref{lm:heavytailconc}} \label{sec:appendix}
\begin{lemma}[Concentration for heavy-tailed $\beta$-mixing sum, Lemma~\ref{lm:heavytailconc}]\label{lm:heavytailconcapp}
	Let $\{W_j\}_{j \geq 1}$ be a sequence of zero-mean real valued random variables satisfying conditions (a) and (c)-(ii) of Assumption~\ref{as:datagenprocess}, for some $\eta_2>2$. Then for any positive integer $N$, $0\leq d_1\leq 1$, and $d_2\geq 0$, and for any $t>1$, we have,
	\begin{align*}
		\mathbb{P}\left(\sup_{j\leq N}\abs{\sum_{i=1}^jW_i}\geq t\right)
		\leq& \frac{2^{\eta_2+3}}{(d_2\log t)^\frac{1-\eta_2}{\eta_1}}\frac{N}{t^{(1+d_1(\eta_2-1))}}+8\frac{N}{t^{(1+c'd_2)}}
		+2e^{-\frac{t^{2-2d_1}(d_2\log t)^{1/\eta_1}}{9N}},
	\end{align*}
where $c'>0$ is a constant.
\end{lemma}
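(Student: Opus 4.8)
The plan is to combine a block decomposition that neutralizes the dependence with the truncation-based large-deviation argument of \cite{bakhshizadeh2020sharp}, which was developed only for the i.i.d.\ case. First I would fix two scales governed by the free parameters: a truncation level $M=t^{d_1}$ and a block length $b=(d_2\log t)^{1/\eta_1}$. The block length is chosen precisely so that the exponential $\beta$-mixing rate in condition (a) yields $\beta(b)\le \exp(-cb^{\eta_1})=t^{-cd_2}$; that is, the dependence between two blocks separated by $b$ indices is only polynomially small in $t$. This choice is the origin of both the exponent $c'd_2$ in the second term and the factor $(d_2\log t)^{1/\eta_1}$ appearing throughout. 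Partitioning $\{1,\dots,N\}$ into $\mu=\lceil N/b\rceil$ consecutive blocks, I would invoke a coupling / independent-blocks lemma (Berbee, Yu) to replace the dependent block sums by an independent surrogate with identical marginals; the defect of this replacement is controlled by $\beta(b)$ and is the source of the second (mixing) term $8N t^{-(1+c'd_2)}$.

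On the independent surrogate I would then run the heavy-tailed truncation argument. Writing each contribution as a part truncated at level $M$ plus an excess part, the event $\{\sup_{j\le N}|\sum_{i\le j}W_i|\ge t\}$ splits — up to a maximal inequality and symmetrization of the two tails, which produce the constants $2$ and $8$ — into a bulk event in which only the truncated variables matter and an excess event in which at least one summand is atypically large. The bulk is a sum of independent variables bounded by $M=t^{d_1}$ over an effective sample of size $\mu=N/b$, so a Hoeffding/Bernstein estimate gives a bound of the form $\exp(-t^{2}/(9\mu M^2))=\exp(-t^{2-2d_1}(d_2\log t)^{1/\eta_1}/(9N))$, which is exactly the third term. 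The excess event is where the polynomial tail of condition (c)-(ii) enters: following \cite{bakhshizadeh2020sharp}, instead of the crude union bound $N\,\mathbb{P}(|W_1|>M)=O(Nt^{-d_1\eta_2})$ one uses that the large summand must carry essentially the whole deviation, which sharpens the estimate by a factor $t^{-(1-d_1)}$ and produces the first term $2^{\eta_2+3}(d_2\log t)^{(\eta_2-1)/\eta_1}N t^{-(1+d_1(\eta_2-1))}$; here the factor $(d_2\log t)^{(\eta_2-1)/\eta_1}=b^{\eta_2-1}$ is the price of carrying this tail estimate at the level of block sums of length $b$.

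A few routine points would be dispatched along the way. The truncated variables are not centered, so I would track the bias $\sum_i\mathbb{E}[W_i\mathbf{1}(|W_i|>M)]=O(Nt^{d_1(1-\eta_2)})$ and absorb it into the threshold $t$, which is harmless since $\eta_2>2$; the reduction from the running maximum $\sup_{j\le N}$ to fixed partial sums uses a standard maximal inequality for independent summands, while the excess event, being defined through ``some index,'' already covers all partial sums at once; and the Hoeffding step is valid throughout the regime $t\lesssim \mu M$, i.e.\ $t^{1-d_1}\lesssim \mu$. Setting $\beta(k)=0$ (so $b=1$, $\mu=N$, and the mixing term vanishes) recovers the i.i.d.\ statement of \cite{bakhshizadeh2020sharp}, a useful check on the constants.

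The hard part will be preserving the \emph{sharpness} of the i.i.d.\ bound after decoupling. The naive coupling cost $\mu\,\beta(b)$ is by itself too large to give the stated second term, so the mixing defect must be interwoven with the rarity of the excess / large-deviation events rather than paid globally; likewise the excess term must use the ``single big jump carries the whole deviation'' refinement rather than a union bound at the truncation scale. Making these two sharpenings coexist with the block structure — while choosing $(d_1,d_2)$ so that the three terms balance and the Hoeffding regime remains valid — is the delicate part of the argument.
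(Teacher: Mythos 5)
Your overall architecture coincides with the paper's proof: truncate each $W_i$ at a polynomial level, partition into blocks of length $A=(d_2\log t)^{1/\eta_1}$ chosen so that the exponential mixing rate becomes polynomial in $t$, couple the block sums to independent copies, apply a Hoeffding/MGF bound to the bounded independent surrogates (giving the third term), and control the truncation excess by Markov's inequality on $\sum_i \E\abs{W_i-W_{i,M}}$, which is exactly the "single large summand carries the deviation" gain of the factor $t^{-(1-d_1)}$ over the naive union bound (giving the first term). Two of your three terms are therefore obtained by the same mechanism as in the paper.

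The genuine gap is the mixing term, and you have in fact flagged it yourself without resolving it. Your proposal couples via Berbee/Yu, whose failure event has probability $\mu\,\beta(b)\approx N t^{-cd_2}/(d_2\log t)^{1/\eta_1}$; this must be paid unconditionally (a total-variation coupling either succeeds or fails, independently of how rare the deviation event is), so "interweaving the mixing defect with the rarity of the excess events" cannot recover the missing factor of $t^{-1}$ in the stated second term $8Nt^{-(1+c'd_2)}$. The paper's resolution is different and concrete: exponential $\beta$-mixing implies exponential $\tau$-mixing \cite{chwialkowski2016kernel}, and the Dedecker--Prieur coupling (Lemma 5 of \cite{dedecker2004coupling}) controls the coupling error in $L^1$ rather than in probability,
\begin{align*}
\expec{\abs{\Sigma_{N,M}(I_{2i})-\Sigma^*_{N,M}(I_{2i})}}{}\leq A\,\tau(A),
\end{align*}
so that Markov's inequality at threshold $t/2$ applied to the sum of the coupling errors yields $\frac{4A\mu_1\tau(A)}{t}\leq \frac{8N e^{-c'A^{\eta_1}}}{t}=8Nt^{-(1+c'd_2)}$; the $1/t$ comes from Markov, not from any refinement of the rare-event analysis. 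Without switching to an $L^1$-type coupling, your blueprint cannot produce the stated bound.

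A secondary bookkeeping point: your truncation level $M=t^{d_1}$ is inconsistent with your Hoeffding display. The independent block sums are bounded by $AM$, not $M$, so with your $M$ the exponent would be $t^{2-2d_1}(d_2\log t)^{-1/\eta_1}/N$ (log factor in the denominator). The paper takes $M=t^{d_1}/\bigl(2(d_2\log t)^{1/\eta_1}\bigr)$, so that $2AM\leq t$ and $AM^2=t^{2d_1}/(4A)$; this is also what puts the factor $(d_2\log t)^{(\eta_2-1)/\eta_1}$ into the first term. This is fixable by adjusting constants, unlike the coupling issue above.
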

\begin{proof}[Proof of Lemma~\ref{lm:heavytailconc}]
	Let $W_{i,M}$ denote the truncated random variable $W_i$ such that $W_{i,M}=\max(\min(W_i,M),-M)$. Then define $\Sigma_N:=\sum_{i=1}^NW_i$. Consider the partition of the samples into blocks of length $A$, $I_i=\{1+(i-1)A,\cdots,iA\}$ for $i=1,2,\cdots,2\mu_1$ where $\mu_1=\left[N/(2A)\right]$. Also let $I_{2\mu_1+1}=\{2\mu_1A+1,\cdots,N\}$. Define for a finite set $I$ of positive integers, define $\Sigma_{N,M}(I)=\sum_{i\in I}W_{i,M}$. Then we can write, for $j\leq N$
	\begin{align}
		\Sigma_j=&\sum_{i=1}^jW_i\\
		=&\sum_{i=1}^j(W_i-W_{i,M})+\sum_{i=1}^jW_{i,M}\\
		=&\sum_{i=1}^j(W_i-W_{i,M})+\sum_{i\leq [j/A]}\Sigma_{N,M}(I_{2i})+\sum_{i\leq [j/A]}\Sigma_{N,M}(I_{2i-1})+\sum_{i=A[j/A]+1}^{j}W_{i,M}.
	\end{align}
	Then we have,
	\begin{align}
		\abs{\Sigma_j}&\leq \sum_{i=1}^j\abs{W_i-W_{i,M}}+\abs{\sum_{i\leq [j/A]}\Sigma_{N,M}(I_{2i})}+\abs{\sum_{i\leq [j/A]}\Sigma_{N,M}(I_{2i-1})}+2AM\\
		\sup_{j\leq N}\abs{\Sigma_j}&\leq \sum_{i=1}^N\abs{W_i-W_{i,M}}+\sup_{j\leq N}\abs{\sum_{i\leq [j/A]}\Sigma_{N,M}(I_{2i})}+\sup_{j\leq N}\abs{\sum_{i\leq [j/A]}\Sigma_{N,M}(I_{2i-1})}+2AM.
	\end{align}
	Now we will establish concentration for each of the terms in the above expression. Using Markov's inequality, 
	\begin{align}
		\mathbb{P}\left(\sum_{i=1}^N\abs{W_i-W_{i,M}}\geq t\right)&\leq \frac{1}{t}\sum_{i=1}^N\expec{\abs{W_i-W_{i,M}}}{}\leq \frac{2}{t}\sum_{i=1}^N\int_M^\infty \mathbb{P}(\abs{W_i}\geq x)dx\nonumber\\
		&\leq  \frac{2N}{t}\int_M^\infty x^{-\eta_2}dx
		=\frac{2N}{t(\eta_2-1)}M^{1-\eta_2}.
	\end{align}
	Using Lemma 5 of \cite{dedecker2004coupling}, we get independent random variables $\{\Sigma^*_{N,M}(I_{2i})\}_{1\leq i\leq\mu_1 }$, where $\Sigma^*_{N,M}(I_{2i})$ has the same distribution as $\Sigma_{N,M}(I_{2i})$, such that, 
	\begin{align}
		\expec{\abs{\Sigma_{N,M}(I_{2i})-\Sigma^*_{N,M}(I_{2i})}}{}\leq A\tau(A).
	\end{align}
	Then, using Markov's inequality we have,
	\begin{align*}
		&\mathbb{P}\left(\sup_{j\leq N}\abs{\sum_{i\leq [j/A]}\Sigma_{N,M}(I_{2i})}\geq t\right)\\
		\leq &\mathbb{P}\left(\sup_{j\leq N}\abs{\sum_{i\leq [j/A]}(\Sigma_{N,M}(I_{2i})-\Sigma^*_{N,M}(I_{2i}))}+\sup_{j\leq N}\abs{\sum_{i\leq [j/A]}\Sigma^*_{N,M}(I_{2i})}\geq t\right)\\
		\leq &\mathbb{P}\left(\sup_{j\leq N}\abs{\sum_{i\leq [j/A]}(\Sigma_{N,M}(I_{2i})-\Sigma^*_{N,M}(I_{2i}))}\geq \frac{t}{2}\right)+\mathbb{P}\left(\sup_{j\leq N}\abs{\sum_{i\leq [j/A]}\Sigma^*_{N,M}(I_{2i})}\geq \frac{t}{2}\right)\\
		\leq &\frac{2\expec{\sup_{j\leq N}\abs{\sum_{i\leq [j/A]}(\Sigma_{N,M}(I_{2i})-\Sigma^*_{N,M}(I_{2i}))}}{}}{t}+\mathbb{P}\left(\sup_{j\leq N}\abs{\sum_{i\leq [j/A]}\Sigma^*_{N,M}(I_{2i})}\geq \frac{t}{2}\right)\\
		\leq &\frac{2\expec{\sup_{j\leq N}\sum_{i\leq \mu_1}\abs{\Sigma_{N,M}(I_{2i})-\Sigma^*_{N,M}(I_{2i})}}{}}{t}+\mathbb{P}\left(\sup_{j\leq N}\abs{\sum_{i\leq [j/A]}\Sigma^*_{N,M}(I_{2i})}\geq \frac{t}{2}\right)\\
		\leq &\frac{2A\mu_1\tau(A)}{t}+\mathbb{P}\left(\sup_{j\leq N}\abs{\sum_{i\leq [j/A]}\Sigma^*_{N,M}(I_{2i})}\geq \frac{t}{2}\right).
	\end{align*}
	The same results holds for $\{\Sigma^*_{N,M}(I_{2i-1})\}_{i=1,2,\cdots,k}$. So for any $t\geq 2AM$, we have,
	\begin{align*}
		\mathbb{P}\left(\sup_{j\leq N}\abs{\Sigma_j}\geq 6t\right)\leq& \frac{2N}{t(\eta_2-1)}M^{1-\eta_2}+\frac{4A\mu_1\tau(A)}{t}
		+\mathbb{P}\left(\sup_{j\leq N}\abs{\sum_{i\leq [j/A]}\Sigma^*_{N,M}(I_{2i})}\geq t\right)\\
		&~~+\mathbb{P}\left(\sup_{j\leq N}\abs{\sum_{i\leq [j/A]}\Sigma^*_{N,M}(I_{2i-1})}\geq t\right). \numberthis\label{eq:sumxnboundintermed}
	\end{align*}
	Now, for $\lambda>0$
	\begin{align*}
		 \mathbb{P}\left(\sup_{j\leq N}\abs{\sum_{i\leq [j/A]}\Sigma^*_{N,M}(I_{2i})}\geq t\right)
		&\leq e^{-\lambda t}\expec{\exp\left(\lambda \sup_{j\leq N}\abs{\sum_{i\leq [j/A]}\Sigma^*_{N,M}(I_{2i})} \right)}{}\\
		&\leq  e^{-\lambda t}\expec{\exp\left(\lambda \sum_{i\leq \mu_1}\abs{\Sigma^*_{N,M}(I_{2i})} \right)}{}\\
		&\leq  e^{-\lambda t}\Pi_{i=1}^{\mu_1}\expec{\exp\left(\lambda \abs{\Sigma_{N,M}(I_{2i})} \right)}{}.
	\end{align*}
	We have $\abs{\Sigma_{N,M}(I_{2i})}\leq AM$. So $\abs{\Sigma_{N,M}(I_{2i})}$ is a sub-gaussian random variable and consequently,
	\begin{align*}
		\mathbb{P}\left(\sup_{j\leq N}\abs{\sum_{i\leq [j/A]}\Sigma^*_{N,M}(I_{2i})}\geq t\right)
		\leq e^{-\lambda t}e^{\frac{\lambda^2\mu_1A^2M^2}{2}}.
	\end{align*}
	Optimizing over $\lambda>0$ we have,
	\begin{align*}
		\mathbb{P}\left(\sup_{j\leq N}\abs{\sum_{i\leq [j/A]}\Sigma^*_{N,M}(I_{2i})}\geq t\right)
		\leq e^{-\frac{t^2}{2\mu_1A^2M^2}}. \numberthis\label{eq:sumevenbound}
	\end{align*}
	Similarly, we also obtain
	\begin{align*}
		\mathbb{P}\left(\sup_{j\leq N}\abs{\sum_{i\leq [j/A]}\Sigma^*_{N,M}(I_{2i-1})}\geq t\right)
		\leq e^{-\frac{t^2}{2\mu_1A^2M^2}}. \numberthis\label{eq:sumoddbound}
	\end{align*}
	From \eqref{eq:sumxnboundintermed}, \eqref{eq:sumevenbound}, and \eqref{eq:sumoddbound} we have,
	\begin{align*}
		\mathbb{P}\left(\sup_{j\leq N}\abs{\Sigma_j}\geq 6t\right)\leq& \frac{2N}{t(\eta_2-1)}M^{1-\eta_2}+\frac{4A\mu_1\tau(A)}{t}
		+2e^{-\frac{t^2}{2\mu_1A^2M^2}}.
	\end{align*}
	Condition (a) in Assumption~\ref{as:datagenprocess} implies that the process $\{Z_i\}_{i=-\infty}^\infty$ is exponentially $\tau$-mixing \cite{chwialkowski2016kernel}, i.e., for a constant $c'>0$, $\tau(k)\leq e^{-c'k^{\eta_1}}$. Then we have
	\begin{align*}
		\mathbb{P}\left(\sup_{j\leq N}\abs{\Sigma_j}\geq t\right)\leq& \frac{12N}{t(\eta_2-1)}M^{1-\eta_2}+\frac{24A\mu_1\exp(-c'A^{\eta_1})}{t}
		+2e^{-\frac{t^2}{72\mu_1A^2M^2}}.
	\end{align*}
	As $2A\mu_1\leq N\leq 3A\mu_1$ and $\eta_2>2$,
	\begin{align*}
		\mathbb{P}\left(\sup_{j\leq N}\abs{\Sigma_j}\geq t\right)\leq& \frac{12NM^{1-\eta_2}}{t}+\frac{8N\exp(-c'A^{\eta_1})}{t}
		+2e^{-\frac{t^2}{36NAM^2}}.
	\end{align*}
	Now choosing 
	\begin{align}
		M=\frac{t^{d_1}}{2(d_2 \log t)^\frac{1}{\eta_1}}, \quad A=\left(d_2\log t\right)^\frac{1}{\eta_1}, \qquad 0\leq d_1\leq 1,\quad d_2\geq 0,	
	\end{align}
	we have, $2AM\leq t$, and 
	\begin{align*}
		\mathbb{P}\left(\sup_{j\leq N}\abs{\Sigma_j}\geq t\right)\leq& \frac{2^{\eta_2+3}}{(d_2\log t)^\frac{1-\eta_2}{\eta_1}}Nt^{-(1+d_1(\eta_2-1))}+8Nt^{-(1+d_2c')}
		+2e^{-\frac{t^{2-2d_1}(d_2\log t)^{1/\eta_1}}{9N}}.
	\end{align*}
\end{proof}
\section{{Proofs of Section~\ref{sec:mainresults}}}
\subsection{Proofs for squared error loss}
Similar to the decomposition \eqref{eq:decomp}, for squared loss we have
\begin{align*}
	P_NL_f=\frac{1}{N}\sum_{i=1}^N (f-f^*)^2(X_i)+\frac{2}{N}\sum_{i=1}^N \xi_i(f-f^*)(X_i),
	\vspace{-.05in}
\end{align*}
Since $\cF$ is convex, we also have
\begin{align*}
    \expec{\xi(f-f^*)(X)}{}\geq 0.
\end{align*}
Then,
\begin{align}\label{eq:decompwithexpecsqr}
	P_NL_f\geq\frac{1}{N}\sum_{i=1}^N (f-f^*)^2(X_i)+\frac{2}{N}\sum_{i=1}^N (\xi_i(f-f^*)(X_i)-\expec{\xi_i(f-f^*)(X_i)}{}).
	\vspace{-.05in}
\end{align}
Now our goal is to establish a lower bound (Lemma~\ref{lm:th5.3eqv}) on the first term of the RHS of \eqref{eq:decompwithexpecsqr}, and a two-sided bound (\eqref{eq:noiseconcalpha} and \eqref{eq:noiseconcalphapoly}) on the second term when $\|f-f^*\|_{L_2}$ is large. Combining these bounds we will show that if $\|f-f^*\|_{L_2}$ is large then $P_NL_f>0$ which implies $f$ cannot be a minimizer of empirical risk because for the minimizer $\hat{f}$ we have $P_NL_{\hat{f}}\leq 0$. 
\begin{lemma}\label{lm:th5.3eqv}
	Let Condition (a) and (b) of Assumption~\ref{as:datagenprocess} be true. Given $f^*\in \cF$, set $\cH=\cF-f^*$. Then, for every $\rho>\omega_\mu(\cH,\tau Q_\cH(2\tau)/16)$, with probability at least  $\mathscr{P}_1$, if $\|f-f^*\|_{L_2}\geq \rho$, we have,
	\begin{align}
		\abs{\{i:\abs{(f-f^*)(X_i)}\geq\tau\|f-f^*\|_{L_2}\}}\geq \frac{NQ_\cH(2\tau)}{4}.
	\end{align}
\end{lemma}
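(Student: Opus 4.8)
The plan is to adapt Mendelson's small-ball method, replacing the indicator by a Lipschitz surrogate and then controlling the resulting empirical average uniformly over $\cH=\cF-f^*$, with the $\iid$ symmetrization step replaced by a blocking/coupling argument tailored to the $\beta$-mixing structure. Concretely, write $h=f-f^*$ and fix a $(1/\tau)$-Lipschitz function $\psi:\mathbb{R}_+\to[0,1]$ with $\psi(0)=0$ that is sandwiched as $\mathbbm{1}_{[2\tau,\infty)}\le\psi\le\mathbbm{1}_{[\tau,\infty)}$ (zero on $[0,\tau]$, one on $[2\tau,\infty)$, linear in between). Then for every $h$ with $\|h\|_{L_2}\ge\rho>0$,
\[
\abs{\{i:\abs{h(X_i)}\ge\tau\|h\|_{L_2}\}}\;\ge\;\sum_{i=1}^N\psi\!\left(\frac{\abs{h(X_i)}}{\|h\|_{L_2}}\right),
\]
so it suffices to show $\frac1N\sum_{i=1}^N\psi(\abs{h(X_i)}/\|h\|_{L_2})\ge Q_\cH(2\tau)/4$ uniformly over such $h$.

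The expectation is immediate from condition (b): $\mathbb{E}\,\psi(\abs{h(X)}/\|h\|_{L_2})\ge\mathbb{P}(\abs{h(X)}\ge 2\tau\|h\|_{L_2})\ge Q_\cH(2\tau)$. Hence the lemma reduces to the uniform deviation bound
\[
\sup_{h\in\cH,\ \|h\|_{L_2}\ge\rho}\left(\mathbb{E}\,\psi\!\left(\tfrac{\abs{h(X)}}{\|h\|_{L_2}}\right)-\frac1N\sum_{i=1}^N\psi\!\left(\tfrac{\abs{h(X_i)}}{\|h\|_{L_2}}\right)\right)\;\le\;\tfrac34\,Q_\cH(2\tau)
\]
holding with probability at least $\mathscr{P}_1$. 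To remove the $h$-dependent normalization I would use that $\cH$ is convex and contains $0$ (since $f^*\in\cF$), hence star-shaped about the origin: for $\|h\|_{L_2}\ge\rho$ the rescaling $\bar h=(\rho/\|h\|_{L_2})\,h$ lies in $\cH\cap\rho\mathcal{D}=\{g\in\cH:\|g\|_{L_2}\le\rho\}$ with $\|\bar h\|_{L_2}=\rho$, and $h(X_i)/\|h\|_{L_2}=\bar h(X_i)/\rho$. This passes the sphere-indexed process to the localized ball appearing in the definition \eqref{eq:localrad} of $\omega_\mu$.

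The crux, and the expected main obstacle, is bounding this supremum for dependent data, since the $\iid$ symmetrization of \cite{pmlr-v35-mendelson14,mendelson2018learning} is unavailable. I would resolve it by the blocking technique of \cite{yu1994rates}: partition the sample into the $\mu$ alternating blocks $S_a,S_b$ of \eqref{eq:partition} and apply a Berbee-type coupling (as in \cite[Lemma 5]{dedecker2004coupling}) to replace the $a$-blocks by mutually independent ones, at a total-variation cost of order $\mu\,\beta(b)\le\mu\exp(-cb^{\eta_1})$. On the coupled independent-block process I would then (i) apply McDiarmid's bounded-difference inequality, which is legitimate because $\psi\in[0,1]$ so each block perturbs the average by $O(a/N)$, to concentrate the supremum around its mean; and (ii) bound the mean by symmetrization followed by the contraction principle (\cite[Corollary 3.17]{ledoux2013probability}), using that $\psi$ is $(1/\tau)$-Lipschitz with $\psi(0)=0$ and that $t\mapsto\abs{t}$ is $1$-Lipschitz, to obtain a bound of the form $\tfrac{C}{\tau\rho}\,\mathbb{E}\sup_{g\in\cH\cap\rho\mathcal{D}}\bigl|\tfrac1\mu\sum_i\epsilon_i g(\widetilde{X}_i)\bigr|$ for a universal $C$. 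Invoking the definition of $\omega_\mu$ with $\gamma=\tau Q_\cH(2\tau)/16$ together with the hypothesis $\rho>\omega_\mu(\cH,\gamma)$ makes this last expectation at most $\gamma\rho$, so the mean deviation is a small fraction of $Q_\cH(2\tau)$, leaving ample slack below $\tfrac34 Q_\cH(2\tau)$. The McDiarmid fluctuation and the coupling defect $\mu\,\beta(b)$ are exactly the two negative terms of $\mathscr{P}_1$ in \eqref{eq:mathscrpalpha}: choosing $\mu=N^rQ_\cH(2\tau)c^{1/\eta_1}/4$ and $b\sim N^{1-r}$ gives $\mu\exp(-cb^{\eta_1})\asymp N^{r}Q_\cH(2\tau)c^{1/\eta_1}\exp(-N^{(1-r)\eta_1})$. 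Combining the expectation lower bound with this deviation estimate yields $\frac1N\sum_i\psi\ge Q_\cH(2\tau)/4$, which is the claim; the main technical care will be making the reduction from the $N$-sample dependent process to the $\mu$-sample localized Rademacher complexity fully rigorous, since this is where the dependence is absorbed into the effective sample size $\mu$.
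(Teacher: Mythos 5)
Your proposal is correct and is essentially the paper's own argument: the paper proves this lemma by combining Lemma~\ref{lm:thilowbound} (which uses exactly your Lipschitz surrogate $\psi_u$ sandwiched between the indicators of $\{|h|\ge 2\tau\}$ and $\{|h|\ge\tau\}$, Yu-type blocking with coupling to independent $a$-blocks at cost $(\mu-1)\beta(b)$, McDiarmid's bounded-difference inequality for the supremum over the coupled blocks, and symmetrization plus contraction to reduce its mean to the blocked Rademacher complexity controlled by $\omega_\mu(\cH,\tau Q_\cH(2\tau)/16)$) with Corollary~\ref{cor:Vbound}, which is precisely your star-shaped rescaling of $\{h\in\cH:\|h\|_{L_2}\ge\rho\}$ onto $\cH\cap\rho S(L_2)$. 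Your parameter choices ($\mu\sim N^rQ_\cH(2\tau)c^{1/\eta_1}/4$, $b\sim N^{1-r}$, and a deviation budget of $\tfrac{3}{4}Q_\cH(2\tau)$ split among the Rademacher term, the $b$-block overhead, and the McDiarmid fluctuation) reproduce the two terms of $\mathscr{P}_1$.
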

The proof of Lemma~\ref{lm:th5.3eqv} follows easily by combining the results of Lemma~\ref{lm:thilowbound}, and Corollary~\ref{cor:Vbound} which we state next. 

\begin{lemma}\label{lm:thilowbound}
	Let $S(L_2)$ be the $L_2(\pi)$ unit sphere and let $\cH\subset S(L_2)$. Consider the partition in \eqref{eq:partition}.  Under conditions~(a) and (b) of Assumption~\ref{as:datagenprocess}, by setting $\mu=\frac{NQ_\cH(2\tau)c^\frac{1}{\eta_1}}{4\mathscr{G}(N)^\frac{1}{\eta_1}}$ for some $\mathscr{G}(N)\leq \frac{cQ_\cH(2\tau)^{\eta_1}N^{\eta_1}}{4^{\eta_1}}$, if,
	\begin{align}
		\mathfrak{R}_\mu(\cH)\leq \frac{\tau Q_\cH(2\tau)N}{16\mu}, \label{eq:Rmubound}
	\end{align}
	then with probability at least $1-2\exp\left(-\frac{NQ_\cH(2\tau)^3}{2(4-Q_\cH(2\tau))^2}\left(\frac{c}{\mathscr{G}(N)}\right)^\frac{1}{\eta_1}\right)-\frac{NQ_\cH(2\tau)c^\frac{1}{\eta_1}}{4\mathscr{G}(N)^\frac{1}{\eta_1}}\exp(-\mathscr{G}(N))$, we have
	\begin{align*}
		\inf_{h\in\cH}\abs{\{i:\abs{h(X_i)}\geq\tau\}}\geq & \frac{NQ_\cH(2\tau)}{4}.
		\numberthis\label{eq:hlowerbound}
	\end{align*}
\end{lemma}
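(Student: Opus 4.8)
The plan is to reduce the combinatorial count $|\{i:|h(X_i)|\ge\tau\}|$ to an empirical average of a Lipschitz surrogate and then to control that empirical process uniformly over $\cH$ by the blocking technique, which restores access to the $\iid$ machinery (symmetrization, contraction, bounded differences) that is otherwise unavailable under dependence. Concretely, I would first fix a $1/\tau$-Lipschitz function $\psi_\tau:\mathbb{R}_+\to[0,1]$ with $\mathbf{1}_{\{t\ge 2\tau\}}\le \psi_\tau(t)\le \mathbf{1}_{\{t\ge\tau\}}$, so that $|\{i:|h(X_i)|\ge\tau\}| = N\,P_N\mathbf{1}_{\{|h|\ge\tau\}} \ge N\,P_N\psi_\tau(|h|)$. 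Writing $P_N\psi_\tau(|h|) = P\psi_\tau(|h|) - \bigl(P\psi_\tau(|h|)-P_N\psi_\tau(|h|)\bigr)$, the population term is bounded below uniformly: since $\psi_\tau(|h|)\ge\mathbf{1}_{\{|h|\ge 2\tau\}}$ and $\|h\|_{L_2}=1$ on $S(L_2)$, the small-ball condition (b) of Assumption~\ref{as:datagenprocess} gives $P\psi_\tau(|h|)\ge\pro(|h|\ge 2\tau)\ge Q_\cH(2\tau)$ for every $h\in\cH$. It then remains to show that $\sup_{h\in\cH}|P_N\psi_\tau(|h|)-P\psi_\tau(|h|)|\le \tfrac34 Q_\cH(2\tau)$ with the claimed probability.

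For the deviation, I would use the partition \eqref{eq:partition} into $\mu$ interleaved $a$-blocks and $b$-blocks. Because $\psi_\tau\in[0,1]$, the points lying in the $b$-blocks contribute at most their empirical mass $\tfrac{b\mu}{N}$ to the deviation, so I set these aside deterministically; the parameter choice is arranged so that $\tfrac{b\mu}{N}=\tfrac14 Q_\cH(2\tau)$ exactly. For the remaining $a$-block points I would invoke Yu's independent-block coupling (Corollary 2.7 of \cite{yu1994rates}): replacing the dependent $a$-blocks by independent copies $\{\tilde Z_i^{(a)}\}_{i=1}^\mu$ changes the probability of any event measurable w.r.t.\ these blocks by at most $(\mu-1)\beta(b)$, and under the exponential mixing of condition (a) with $b=(\mathscr{G}(N)/c)^{1/\eta_1}$ this cost is $(\mu-1)\beta(b)\le \mu e^{-\mathscr{G}(N)}$, which is precisely the second probability term. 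On the independent blocks the functional $\sup_{h\in\cH}|\cdot|$ has bounded differences of order $a/N$ per block, so McDiarmid's inequality concentrates it around its expectation; taking the deviation at level $t=\tfrac14 Q_\cH(2\tau)$ and computing $\sum_i(a/N)^2=\mu(a/N)^2$ with the balanced block lengths reproduces the first probability term $2\exp\!\bigl(-\tfrac{NQ_\cH(2\tau)^3}{2(4-Q_\cH(2\tau))^2}(c/\mathscr{G}(N))^{1/\eta_1}\bigr)$.

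Finally, the expectation of the independent-block empirical process is handled by Gin\'e--Zinn symmetrization followed by the contraction principle for the $1/\tau$-Lipschitz map $\psi_\tau$ (e.g.\ \cite{ledoux2013probability}), which removes $\psi_\tau$ and leaves the block Rademacher complexity $\mathfrak{R}_\mu(\cH)$; the hypothesis \eqref{eq:Rmubound} is calibrated so that this contributes at most the last $\tfrac14 Q_\cH(2\tau)$ of slack. Summing the three contributions $\tfrac14 Q_\cH(2\tau)$ (discarded $b$-mass) $+\,\tfrac14 Q_\cH(2\tau)$ (expected deviation) $+\,\tfrac14 Q_\cH(2\tau)$ (McDiarmid fluctuation) subtracts $\tfrac34 Q_\cH(2\tau)$ from the population lower bound $Q_\cH(2\tau)$, leaving $N\,P_N\psi_\tau(|h|)\ge \tfrac14 N\,Q_\cH(2\tau)$ uniformly over $h\in\cH$, which is \eqref{eq:hlowerbound}. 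Unwinding the identifications $\mu=\tfrac{NQ_\cH(2\tau)c^{1/\eta_1}}{4\mathscr{G}(N)^{1/\eta_1}}$ and $a+b=N/\mu$ gives the stated constraint $\mathscr{G}(N)\le cQ_\cH(2\tau)^{\eta_1}N^{\eta_1}/4^{\eta_1}$, which merely ensures that at least one block fits, i.e.\ $\mu\ge 1$ (equivalently $a+b\le N$).

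The main obstacle is the second step: symmetrization and contraction are intrinsically $\iid$ devices and cannot be applied to the dependent sample directly, so the whole argument hinges on decoupling the blocks via \cite{yu1994rates} and, crucially, on transferring not merely the expectation but the full McDiarmid concentration statement from the independent to the dependent blocks while keeping the three error sources --- the discarded $b$-mass, the mixing penalty $(\mu-1)\beta(b)$, and the bounded-differences fluctuation --- simultaneously within the fixed budget $\tfrac34 Q_\cH(2\tau)$. Getting this three-way balance right is exactly what forces the specific choice of $\mu$ (equivalently of the free parameter $\mathscr{G}(N)$) and of the block lengths $a,b$.
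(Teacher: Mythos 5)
Your proposal follows essentially the same route as the paper's proof: the Lipschitz surrogate $\psi_\tau$ combined with the small-ball lower bound, the interleaved-block decomposition with the $b$-blocks discarded at deterministic cost $b\mu/N$, Yu's independent-block coupling at probability cost $(\mu-1)\beta(b)$, McDiarmid's inequality on the independent blocks, and symmetrization plus contraction to reduce the expectation to $\mathfrak{R}_\mu(\cH)$, with the same three-way $\tfrac14 Q_\cH(2\tau)$ budget and the same parameter choices for $a$, $b$, $\mu$, $\mathscr{G}(N)$. If anything, your version of the uniform concentration step (applying the bounded-differences inequality directly to the supremum functional and separately bounding its expectation) is the cleaner rendering of what the paper's proof intends.
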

\begin{remark} We have the following illustrative instantiations of Lemma~\ref{lm:thilowbound}:
\begin{enumerate}
\item If one sets $\mathscr{G}(N)=k\log N\leq \frac{cQ_\cH(2\tau)^{\eta_1}N^{\eta_1}}{4^{\eta_1}}$, then the statement of Lemma~\ref{lm:thilowbound} holds as long as,
		\begin{align}
			\mathfrak{R}_\mu(\cH)\leq \frac{\tau (k\log N)^\frac{1}{\eta_1}}{4c^\frac{1}{\eta_1}}, \label{eq:RmuboundGlogN}
		\end{align}
	 with probability at least $$1-2\exp\left(-\frac{NQ_\cH(2\tau)^3}{2(4-Q_\cH(2\tau))^2}\left(\frac{c}{k\log N}\right)^\frac{1}{\eta_1}\right)-\frac{N^{1-k}Q_\cH(2\tau)c^\frac{1}{\eta_1}}{4(k\log N)^\frac{1}{\eta_1}}.$$ 
\item If one sets $\mathscr{G}(N)=N^r\leq \frac{cQ_\cH(2\tau)^{\eta_1}N^{\eta_1}}{4^{\eta_1}}$, \textcolor{black}{for some $0 < r < \eta_1$}, then the statement of Lemma~\ref{lm:thilowbound} holds as long as,
		\begin{align}
			\mathfrak{R}_\mu(\cH)\leq \frac{\tau (N)^\frac{r}{\eta_1}}{4c^\frac{1}{\eta_1}}, \label{eq:RmuboundGNr}
		\end{align}
	 with probability at least $$1-2\exp\left(-\frac{NQ_\cH(2\tau)^3}{2(4-Q_\cH(2\tau))^2}\left(\frac{c}{N^r}\right)^\frac{1}{\eta_1}\right)-\frac{NQ_\cH(2\tau)c^\frac{1}{\eta_1}}{4(N)^\frac{r}{\eta_1}}\exp(-N^r).$$ 
\end{enumerate}
\end{remark}
\begin{proof}[Proof of Lemma~\ref{lm:thilowbound}]
	Let $\psi_u:\mathbb{R}_+\rightarrow [0,1]$ be the function
	\begin{align*}
		\psi_u(t) = 
		\begin{cases}
			1 &\quad t\geq 2u,\\
			\frac{t}{u}-1&\quad u\leq t\leq 2u \\
			0 &\quad t<u
		\end{cases}
	\end{align*} 
		Similar to \eqref{eq:partition}, let us define sequences of i.i.d blocks $\{\tilde{Z}_i^{(a)}\}_{i=1}^\mu$, and $\{\tilde{Z}_i^{(b)}\}_{i=1}^\mu$ where the samples within each block are assumed to be drawn from the same $\beta$-mixing distribution of $\{{Z}_i^{(a)}\}_{i=1}^\mu$, and $\{{Z}_i^{(b)}\}_{i=1}^\mu$. Let $\tilde{S}_a=\left(\tilde{Z}_1^{(a)},\cdots,\tilde{Z}_\mu^{(a)}\right)$, and $\tilde{S}_b=\left(\tilde{Z}_1^{(b)},\cdots,\tilde{Z}_\mu^{(b)}\right)$. 
		Now let us concentrate on the term $\abs{P_N\psi_u(|h|)-P\psi_u(|h|)}$.
		\begin{align*}
			&\abs{P_N\psi_u(|h|)-P\psi_u(|h|)}\\
			\leq& \abs{\frac{1}{N}\sum_{i=1}^N\psi_u(h(|X_i|))-P\psi_u(|h|)}\\
			\leq & \abs{\frac{1}{N}\sum_{i=1}^\mu\sum_{j=1}^a\psi_u(|h(X_{(i-1)(a+b)+j})|)+\frac{1}{N}\sum_{i=1}^\mu\sum_{j=1}^b\psi_u(h(|X_{(i-1)(a+b)+a+j}|))-P\psi_u(|h|)}\\ 
			\leq & \abs{\frac{1}{N}\sum_{i=1}^\mu\sum_{j=1}^a\left(\psi_u(h(|X_{(i-1)(a+b)+j}|))-P\psi_u(|h|)\right)}+\frac{b\mu}{N}. \numberthis\label{eq:blockpsi}
		\end{align*}

		Using \eqref{eq:blockpsi} and Corollary 2.7 of \cite{yu1994rates}, for some $a,b,\mu$ to be chosen later such that $(a+b)\mu=N$ we have,
		\begin{align}
			&\pro\left(\abs{P_N\psi_u(|h|)-P\psi_u(|h|)}\geq t+\frac{b\mu}{N}\right)\\
			\leq &\pro\left(\abs{\frac{1}{N}\sum_{i=1}^\mu\sum_{j=1}^a\left(\psi_u(h(|X_{(i-1)(a+b)+j}|))-P\psi_u(|h|)\right)}+\frac{b\mu}{N}\geq t+\frac{b\mu}{N}\right)\\
			=&\expec{\mathbbm{1}\left(\abs{\frac{1}{N}\sum_{i=1}^\mu\sum_{j=1}^a\left(\psi_u(h(|X_{(i-1)(a+b)+j}|))-P\psi_u(|h|)\right)}\geq t\right)}{}\\
			\leq &\expec{\mathbbm{1}\left(\abs{\frac{1}{N}\sum_{i=1}^\mu\sum_{j=1}^a\left(\psi_u(h(|\tilde{X}_{(i-1)(a+b)+j}|))-P\psi_u(|h|)\right)}\geq t\right)}{}+(\mu-1)\beta(b)\\
			= &\pro\left(\abs{\frac{1}{N}\sum_{i=1}^\mu\sum_{j=1}^a\left(\psi_u(h(|\tilde{X}_{(i-1)(a+b)+j}|))-P\psi_u(|h|)\right)}\geq t\right)+(\mu-1)\beta(b)\\
			=&\pro\left(\abs{\frac{1}{\mu}\sum_{i=1}^\mu\tilde{\psi}(\tilde{Z}_i^{(a)})}\geq \frac{Nt}{\mu}\right)+(\mu-1)\beta(b), \label{eq:psitildeplusbetab}
		\end{align}
		where $$\tilde{\psi}(\tilde{Z}_i^{(a)})=\sum_{j=1}^a\left(\psi_u(h(|\tilde{X}_{(i-1)(a+b)+j}|))-P\psi_u(|h|)\right),$$ and $\mathbbm{1}(\cdot)$ is the indicator function. Observe that the function $$W(\tilde{Z}_1^{(a)},\tilde{Z}_2^{(a)},\cdots,\tilde{Z}_\mu^{(a)})=\mu^{-1}\sum_{i=1}^\mu\tilde{\psi}(\tilde{Z}_i^{(a)})$$ has bounded difference with coefficient $2a/\mu$. Then using Mcdiarmid's bounded-difference inequality on $W(\tilde{Z}_1^{(a)},\tilde{Z}_2^{(a)},\cdots,\tilde{Z}_\mu^{(a)})$ we get,
		\begin{align}
			\pro\left(\abs{\frac{1}{\mu}\sum_{i=1}^\mu\tilde{\psi}(\tilde{Z}_i^{(a)})}\geq \frac{Nt}{\mu}\right)\leq 2\exp\left(-\frac{N^2t^2}{2a^2\mu}\right). \label{eq:psitildeconc}
		\end{align}
		Combining \eqref{eq:psitildeplusbetab}, and \eqref{eq:psitildeconc}, we get
		\begin{align*}
			&\pro\left(\abs{P_N\psi_u(|h|)-P\psi_u(|h|)}\geq t+\frac{b\mu}{N}\right)
			\leq  2\exp\left(-\frac{N^2t^2}{2a^2\mu}\right)+(\mu-1)\beta(b),
			\end{align*}
			which implies 
            \begin{align*}
			&\pro\left(\abs{P_N\psi_u(|h|)-P\psi_u(|h|)}\geq \frac{4\mu}{Nu}\mathbb{R}_\mu(\cH)+\frac{b\mu}{N}+\frac{t}{\sqrt{N}}\right)\\
			\leq & 2\exp\left(-\frac{N^2\left(\frac{4\mu}{Nu}\mathbb{R}_\mu(\cH)+\frac{t}{\sqrt{N}}\right)^2}{2a^2\mu}\right)+(\mu-1)\beta(b) .
		\end{align*}	
		Also note that, for any $t$, we have $			\abs{P_N\psi_u(|h|)-P\psi_u(|h|)}\geq t$ which implies that we also have $ \sup_{h\in\cH}\abs{P_N\psi_u(|h|)-P\psi_u(|h|)}\geq t$. Hence,
		\begin{align*}
			&\pro\left(\sup_{h\in\cH}\abs{P_N\psi_u(|h|)-P\psi_u(|h|)}\geq \frac{4\mu}{Nu}\mathbb{R}_\mu(\cH)+\frac{b\mu}{N}+\frac{t}{\sqrt{N}}\right)\\
			\leq & 2\exp\left(-\frac{N^2\left(\frac{4\mu}{Nu}\mathbb{R}_\mu(\cH)+\frac{t}{\sqrt{N}}\right)^2}{2a^2\mu}\right)+(\mu-1)\beta(b). \numberthis\label{eq:dependmcdiarmid}
		\end{align*}	
		In other words, with probability at least $1-2\exp\left(-\frac{N^2\left(\frac{4\mu}{Nu}\mathbb{R}_\mu(\cH)+\frac{t}{\sqrt{N}}\right)^2}{2a^2\mu}\right)-(\mu-1)\beta(b)$, we have
		\begin{align}
			\sup_{h\in\cH}\abs{P_N\psi_u(|h|)-P\psi_u(|h|)}\leq \frac{4\mu}{Nu}\mathbb{R}_\mu(\cH)+\frac{b\mu}{N}+\frac{t}{\sqrt{N}}.
		\end{align}
Hence, we have 		
		\begin{align*}
			P_N \mathbf{1}_{\{|h|\geq u\}}\geq \inf_{h\in\cH} \pro(|h|\geq2u)-\sup_{h\in\cH}\abs{P_N\psi_u(|h|)-P\psi_u(|h|)}. \numberthis\label{eq:pninfsup}
		\end{align*}
		So, combining \eqref{eq:dependmcdiarmid}, and \eqref{eq:pninfsup}, with probability at least $1-2\exp\left(-\frac{N^2\left(\frac{4\mu}{Nu}\mathbb{R}_\mu(\cH)+\frac{t}{\sqrt{N}}\right)^2}{2a^2\mu}\right)-(\mu-1)\beta(b)$ we have
		\begin{align*}
			P_N \mathbf{1}_{\{|h|\geq u\}}\geq & \inf_{h\in\cH} \pro(|h|\geq2u)-\frac{4\mu}{Nu}\mathbb{R}_\mu(\cH)-\frac{b\mu}{N}-\frac{t}{\sqrt{N}}.
		\end{align*}
		Now, setting
		\begin{align}
			u=\tau \quad t=\frac{\sqrt{N}Q_\cH}{4} \quad a=\frac{(4-Q_\cH(2\tau))(\mathscr{G}(N))^\frac{1}{\eta_1}}{Q_\cH(2\tau)c^\frac{1}{\eta_1}} \quad b=\left(\frac{\mathscr{G}(N)}{c}\right)^\frac{1}{\eta_1} \quad \mu=\frac{NQ_\cH(2\tau)c^\frac{1}{\eta_1}}{4\mathscr{G}(N)^\frac{1}{\eta_1}},
		\end{align}
		and using the condition $\mathscr{G}(N)>c$, we get, with probability at least $$1-2\exp\left(-\frac{NQ_\cH(2\tau)^3}{2(4-Q_\cH(2\tau))^2}\left(\frac{c}{\mathscr{G}(N)}\right)^\frac{1}{\eta_1}\right)-\frac{NQ_\cH(2\tau)c^\frac{1}{\eta_1}}{4\mathscr{G}(N)^\frac{1}{\eta_1}}\exp(-\mathscr{G}(N)),$$
		we have
		\begin{align*}
			P_N \mathbf{1}_{\{|h|\geq u\}}\geq \frac{Q_\cH(2\tau)}{4}.
	\end{align*}
\end{proof}
\begin{corollary}\label{cor:Vbound}
	Let Condition (a) and (b) of Assumption~\ref{as:datagenprocess} be true. Let $\cH$ be star-shaped around $0$ and assume that there is some $\tau>0$ for which $Q_\cH(2\tau)>0$. Then for every $\rho>\omega_\mu(\cH,\tau Q_\cH(2\tau)/16)$, with probability at least $$\mathscr{P}_1\coloneqq1-2\exp\left(-\frac{NQ_\cH(2\tau)^3}{2(4-Q_\cH(2\tau))^2}\left(\frac{c}{\mathscr{G}(N)}\right)^\frac{1}{\eta_1}\right)-\frac{NQ_\cH(2\tau)c^\frac{1}{\eta_1}}{4\mathscr{G}(N)^\frac{1}{\eta_1}}\exp(-\mathscr{G}(N)),$$ for every $h\in\cH$ that satisfies $\|h\|_{L_2}\geq \rho$,
	\begin{align}
		\abs{\{i:\abs{h(X_i)}\geq\tau\|h\|_{L_2}\}}\geq N\frac{Q_\cH(2\tau)}{4}. \label{eq:ilowboundV}
	\end{align}
\end{corollary}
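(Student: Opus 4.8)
The plan is to deduce Corollary~\ref{cor:Vbound} from Lemma~\ref{lm:thilowbound} by a homogenization argument that exploits the star-shapedness of $\cH$. Lemma~\ref{lm:thilowbound} concerns a class sitting on the unit sphere $S(L_2)$, whereas here I must control all $h\in\cH$ with $\|h\|_{L_2}\geq\rho$ simultaneously; the bridge between the two is the $1$-homogeneity of both the count $|\{i:\cdot\}|$ and the Rademacher functional under scaling of $h$.

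First I would reduce to the sphere of radius $\rho$. Fix any $h\in\cH$ with $\|h\|_{L_2}\geq\rho$ and set $v := (\rho/\|h\|_{L_2})\,h$. Since $0<\rho/\|h\|_{L_2}\leq 1$ and $\cH$ is star-shaped around $0$, we have $v\in\cH$ with $\|v\|_{L_2}=\rho$, and moreover $\{i:|h(X_i)|\geq\tau\|h\|_{L_2}\}=\{i:|v(X_i)|\geq\tau\rho\}$ because scaling by a positive constant leaves the comparison unchanged. Hence it suffices to prove \eqref{eq:ilowboundV} uniformly over the normalized class $\bar\cH:=\{v/\rho : v\in\cH,\ \|v\|_{L_2}=\rho\}\subset S(L_2)$, in the form $\inf_{g\in\bar\cH}|\{i:|g(X_i)|\geq\tau\}|\geq NQ_\cH(2\tau)/4$.

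Next I would verify the hypotheses of Lemma~\ref{lm:thilowbound} for $\bar\cH$. For the small-ball quantile, every $g=v/\rho\in\bar\cH$ satisfies $\pro(|g|\geq 2\tau)=\pro(|v|\geq 2\tau\|v\|_{L_2})\geq Q_\cH(2\tau)$, so the small-ball probability used inside the proof of Lemma~\ref{lm:thilowbound} is bounded below by $Q_\cH(2\tau)>0$. For the complexity condition \eqref{eq:Rmubound}, I would use the monotonicity of the localized Rademacher complexity: for a star-shaped $\cH$ the map $r\mapsto r^{-1}\,\E\sup_{h\in\cH\cap r\cD}|\mu^{-1}\sum_{i=1}^\mu\epsilon_i h(\tilde X_i)|$ is non-increasing, so that the defining inequality of $\omega_\mu$ in Definition~\ref{def:omegadef} persists at every radius exceeding $\omega_\mu$. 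Since $\rho>\omega_\mu(\cH,\tau Q_\cH(2\tau)/16)$, this yields $\E\sup_{v\in\cH\cap\rho\cD}|\mu^{-1}\sum_i\epsilon_i v(\tilde X_i)|\leq (\tau Q_\cH(2\tau)/16)\rho$. Because $\{v\in\cH:\|v\|_{L_2}=\rho\}\subset\cH\cap\rho\cD$ and $\mathfrak{R}_\mu(\bar\cH)=\rho^{-1}\,\E\sup_{v}|\mu^{-1}\sum_i\epsilon_i v(\tilde X_i)|$ by homogeneity, I obtain $\mathfrak{R}_\mu(\bar\cH)\leq \tau Q_\cH(2\tau)/16$, i.e.\ the bound required in \eqref{eq:Rmubound}.

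Finally, applying Lemma~\ref{lm:thilowbound} to $\bar\cH$ with the stated choice of $\mu$ (and $\mathscr{G}(N)$) gives $\inf_{g\in\bar\cH}|\{i:|g(X_i)|\geq\tau\}|\geq NQ_\cH(2\tau)/4$ with probability at least $\mathscr{P}_1$, which by the first step is exactly \eqref{eq:ilowboundV} for every $h$ with $\|h\|_{L_2}\geq\rho$. The main obstacle I anticipate is the complexity-monotonicity step: care is needed to confirm that star-shapedness indeed forces $r^{-1}\phi(r)$ to be non-increasing and that the normalization of $\mathfrak{R}_\mu$ in \eqref{eq:Rmubound} matches Definition~\ref{def:omegadef}, so that $\rho>\omega_\mu$ translates precisely into the hypothesis of Lemma~\ref{lm:thilowbound}; the remaining steps are routine bookkeeping of the $1$-homogeneity.
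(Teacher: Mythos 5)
Your proof is correct and takes essentially the same route as the paper's: normalize $\cH\cap\rho S(L_2)$ to a class on the unit sphere, use star-shapedness both to transfer the defining inequality of $\omega_\mu$ to the radius $\rho$ and to map any $h$ with $\|h\|_{L_2}\geq\rho$ onto that sphere, observe that the normalized class inherits the small-ball lower bound $Q_\cH(2\tau)$, and apply Lemma~\ref{lm:thilowbound}. The only differences are presentational: you perform the reduction to the sphere first and spell out the monotonicity of $r\mapsto r^{-1}\mathfrak{R}_\mu(\cH\cap r\mathcal{D})$ for star-shaped classes, a step the paper invokes implicitly.
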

\begin{proof}[Proof of Corollary~\ref{cor:Vbound}]
	Let $\rho>\omega_\mu(\cH,\tau Q_\cH(2\tau)/16)$ and as $\cH$ is star-shaped around $0$,
	\begin{align}
		\mathfrak{R}_\mu(\cH\cap \rho \mathcal{D})\leq \frac{\tau Q_\cH(2\tau)}{16}\rho.
	\end{align}
Consider the set,
\begin{align}
	V=\{h/\rho:h\in \cH\cap \rho S(L_2)\}\subset S(L_2).
\end{align}
Clearly, $Q_V(2\tau)\geq Q_\cH(2\tau)$ and
\begin{align}
	\mathfrak{R}_\mu(V)=\expec{\sup_{h\in\cH\cap \rho S(L_2)}\abs{\frac{1}{\mu}\sum_{i=1}^\mu\epsilon_i\frac{h(\tilde{X}_i)}{\rho}}}{}\leq \frac{\tau Q_\cH(2\tau)}{16}\leq \frac{\tau Q_V(2\tau)}{16}.
\end{align}
Using Lemma~\ref{lm:thilowbound} on the set $V$, we get with probability at least $\mathscr{P}_1$, for every $v\in V$
\begin{align*}
	\inf_{h\in\cH}\abs{\{i:|v(X_i)\geq \tau|\}}\geq \frac{NQ_V(2\tau)}{4}
	\geq \frac{NQ_\cH(2\tau)}{4}.
\end{align*}
Now for any $h$ with $\|h\|_{L_2}\geq \rho$, since $\cH$ is star-shaped around $0$, we have $(\rho/\|h\|_{L_2})h\in \cH\cap \rho S(L_2)$ which implies, $h/\|h\|_{L_2}\in V$. So we have \eqref{eq:ilowboundV}.
\end{proof}
\begin{theorem}[Restatement of Theorem~\ref{th:mainthm2}]\label{th:mainthm2app}
	Consider the LS-ERM procedure. For  $\tau_0<\tau^2Q_\cH(2\tau)/8$,  setting $\mu=\frac{N^rQ_\cH(2\tau)c^\frac{1}{\eta_1}}{4}$, for some constants $c,c' >0$, and $0<r<1$,  we have, for any $N \geq 4$,
	\begin{enumerate}
	    \item under condition (a), (b), (c)-(i), and (d) of Assumption~\ref{as:datagenprocess}, for $\ 0<\iota<\frac{1}{4}$, 
	    \begin{align*}
\left(\int (\hat f - f^*)^2 d\pi\right)^\frac{1}{2}=\|\hat{f}-f^*\|_{L_2} \leq \max\left\{N^{-\frac{1}{4}+\iota},\omega_\mu(\cF-\cF,\tau Q_{\cF-\cF}(2\tau)/16)\right\} \numberthis\label{eq:errorboundalphaapp}
	\end{align*}
	with probability at least
	\begin{align*}
	1-&2\exp\left(-\frac{N^rQ_\cH(2\tau)^3c^\frac{1}{\eta_1}}{2(4-Q_\cH(2\tau))^2}\right)-\frac{N^rQ_\cH(2\tau)c^\frac{1}{\eta_1}}{4}\exp(-N^{(1-r)\eta_1})-N\exp\left(-\frac{(N^{\frac{1}{2}+2\iota}\tau_0)^\eta}{C_1}\right)\\
	-&\exp\left(-\frac{N^{1+4\iota}\tau_0^2}{C_2(1+NV)}\right)
	-\exp\left(-\frac{N^{4\iota}\tau_0^2}{C_3}\exp\left((1-\eta)^\eta\frac{(N^{\frac{1}{2}+2\iota}\tau_0)^{\frac{\eta(1-\eta)}{2}}}{C_42^\eta}\right)\right),\numberthis\label{eq:mathscrpalphaapp}
		\end{align*}
where $V$ is defined in~\eqref{eq:parameterv} and $C_1, C_2, C_3$ are some positive constants.
	    \item under condition (a), (b), and (c)-(ii) of Assumption~\ref{as:datagenprocess}, for $\ 0<\iota<(1-1/\eta_2)/4$,
	    \begin{align*}
		\|\hat{f}-f^*\|_{L_2} \leq \max\left\{N^{-\frac{1}{4}\left(1-\frac{1}{\eta_2}\right)+\iota},\omega_\mu(\cF-\cF,\tau Q_{\cF-\cF}(2\tau)/16)\right\} \numberthis\label{eq:errorboundalphapolyapp}
	\end{align*}
	with probability at least
	\begin{align*}
		&1-2\exp\left(-\frac{N^rQ_\cH(2\tau)^3c^\frac{1}{\eta_1}}{2(4-Q_\cH(2\tau))^2}\right)-\frac{N^rQ_\cH(2\tau)c^\frac{1}{\eta_1}}{4}\exp(-N^{(1-r)\eta_1})-8\tau_0^{-\frac{2\eta_2}{1+\eta_2}}N^{-\frac{4\iota \eta_2}{1+\eta_2}}\\
		&-\frac{2^{\eta_2+3}{c'}^\frac{1-\eta_2}{\eta_1}\tau_0^{-\frac{2\eta_2}{1+\eta_2}}}{\left(\log \left(\tau_0N^{\frac{1}{2}+\frac{1}{2\eta_2}+2\iota}\right)/2\right)^\frac{1-\eta_2}{\eta_1}}N^{-\frac{4\iota \eta_2}{1+\eta_2}}
		-2e^{-\frac{\tau_0^{\frac{2\eta_2}{1+\eta_2}}\left(\log \left(\tau_0N^{\frac{1}{2}+\frac{1}{2\eta_2}+2\iota}\right)/2\right)^{1/\eta_1}}{9{c'}^{1/\eta_1}}N^{\frac{4\iota \eta_2}{1+\eta_2}}} \numberthis\label{eq:thm10probpolyapp}
	\end{align*}
	\end{enumerate}
	
\end{theorem}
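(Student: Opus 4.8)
The plan is to convert the pointwise decomposition \eqref{eq:decompwithexpecsqr}, namely $P_N L_f \ge T_1(f)+T_2(f)$ with $T_1(f)=\frac1N\sum_{i=1}^N (f-f^*)^2(X_i)$ and $T_2(f)=\frac2N\sum_{i=1}^N\big(\xi_i(f-f^*)(X_i)-\expec{\xi_i(f-f^*)(X_i)}{}\big)$, into a lower bound on $P_N L_f$ that is strictly positive whenever $\|f-f^*\|_{L_2}$ exceeds the claimed radius $\rho^*$. Since $P_N L_{f^*}=0$, the minimizer satisfies $P_N L_{\hat f}\le 0$, and $f\mapsto P_N L_f$ is convex for the squared loss, it suffices to establish $P_N L_f>0$ on the sphere $\{f:\|f-f^*\|_{L_2}=\rho^*\}$: a standard convexity/segment argument along $[f^*,\hat f]$ then forces $\|\hat f-f^*\|_{L_2}\le\rho^*$. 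Both parts of the theorem share this skeleton and differ only in the concentration tool used for $T_2$.

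\textbf{Lower bound on $T_1$.} Setting $\cH=\cF-f^*$, I would invoke Corollary~\ref{cor:Vbound} (built on Lemma~\ref{lm:thilowbound}) with $\mathscr G(N)=N^{(1-r)\eta_1}$ and $\mu=N^r Q_\cH(2\tau)c^{1/\eta_1}/4$. For every $\rho>\omega_\mu(\cF-\cF,\tau Q_{\cF-\cF}(2\tau)/16)$ this yields, with probability at least the first two terms of \eqref{eq:mathscrpalphaapp}, the counting bound $\abs{\{i:\abs{(f-f^*)(X_i)}\ge\tau\|f-f^*\|_{L_2}\}}\ge NQ_\cH(2\tau)/4$ simultaneously for all $f$ with $\|f-f^*\|_{L_2}\ge\rho$; discarding the small coordinates gives $T_1(f)\ge \frac{\tau^2 Q_\cH(2\tau)}{4}\,\|f-f^*\|_{L_2}^2$. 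This is precisely the step where the $\iid$ symmetrization is replaced by the blocking inequality of \cite{yu1994rates} inside Lemma~\ref{lm:thilowbound}.

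\textbf{Control of the fluctuation $T_2$.} By convexity of $\cF$ we have $\expec{\xi(f-f^*)(X)}{}\ge0$, so $T_2(f)\ge-\frac2N\abs{\sum_{i=1}^N W_i}$ with $W_i=\xi_i(f-f^*)(X_i)-\expec{\xi_i(f-f^*)(X_i)}{}$ a stationary, zero-mean, exponentially $\beta$-mixing sequence. For part~1 I would apply Lemma~\ref{lm:rio} under condition (c)-(i) at deviation level $t=N^{1/2+2\iota}\tau_0$; substituting into \eqref{eq:rio} reproduces exactly the last three terms of \eqref{eq:mathscrpalphaapp} and gives $\abs{T_2(f)}\le 2\tau_0 N^{-1/2+2\iota}$. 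For part~2 I would instead apply Lemma~\ref{lm:heavytailconc} under condition (c)-(ii), choosing $d_1,d_2$ and $t=\tau_0 N^{1/2+1/(2\eta_2)+2\iota}$ in \eqref{eq:heavytailconc}, which reproduces the polynomial and Gaussian-type terms of \eqref{eq:thm10probpolyapp} and yields $\abs{T_2(f)}\le 2\tau_0 N^{-1/2+1/(2\eta_2)+2\iota}$.

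\textbf{Combining and balancing.} On the intersection of the good events, $P_N L_f\ge \frac{\tau^2 Q_\cH(2\tau)}{4}\|f-f^*\|_{L_2}^2-\abs{T_2(f)}$. In case~1, requiring $\frac{\tau^2 Q_\cH(2\tau)}{4}\|f-f^*\|_{L_2}^2>2\tau_0 N^{-1/2+2\iota}$ together with the hypothesis $\tau_0<\tau^2 Q_\cH(2\tau)/8$ forces the threshold $\|f-f^*\|_{L_2}>N^{-1/4+\iota}$; taking the maximum with $\rho$ gives \eqref{eq:errorboundalphaapp}. The identical balance in case~2, with the extra $N^{1/(2\eta_2)}$ coming from the heavier tail, produces the exponent $-\frac14(1-1/\eta_2)+\iota$ of \eqref{eq:errorboundalphapolyapp}. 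The hard part will be the second step: unlike in \cite{pmlr-v35-mendelson14}, the multiplier term cannot be handled by a symmetrized empirical process, so the required \emph{uniform-over-the-sphere} deviation bound must be extracted directly from the interaction hypothesis (c) and the mixing concentration inequalities. The delicate points are checking finiteness of $V$ in \eqref{eq:parameterv} and the hypotheses of Lemma~\ref{lm:rio}/Lemma~\ref{lm:heavytailconc} for $\{W_i\}$, and tuning $t$ so that the per-sample fluctuation $t/N$ sits exactly at the scale $\|f-f^*\|_{L_2}^2$ dictated by the small-ball lower bound, which is what pins down the $N^{-1/4}$ (respectively $N^{-(1-1/\eta_2)/4}$) rate and the exponential (respectively polynomial) form of the probability bounds.
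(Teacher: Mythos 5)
Your proposal is correct and follows essentially the same route as the paper's own proof: the same decomposition \eqref{eq:decompwithexpecsqr}, the same small-ball lower bound on $T_1$ via Lemma~\ref{lm:thilowbound}/Corollary~\ref{cor:Vbound} with $\mathscr{G}(N)=N^{(1-r)\eta_1}$, the same application of Lemma~\ref{lm:rio} (part 1) and Lemma~\ref{lm:heavytailconc} (part 2) to the interaction term at threshold $\tau_0\|f-f^*\|_{L_2}^2$ evaluated at $\mathscr{A}(N)$, and the same balancing $\tau_0<\tau^2 Q_\cH(2\tau)/8$ yielding the rates $N^{-1/4+\iota}$ and $N^{-(1-1/\eta_2)/4+\iota}$. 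Your sphere-plus-convexity reduction is a cosmetic variant of the paper's argument (which shows $P_N L_f>0$ for \emph{all} $f$ beyond the radius rather than only on the sphere), and the per-$f$-to-uniform passage for the multiplier term that you flag as the hard part is treated at exactly the same level of detail in the paper itself.
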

\begin{proof}[Proof of  Theorem~\ref{th:mainthm2app}] We first prove \textbf{Part 1}. We will denote the class $\cF-f^*$ by $\cH$. From Lemma~\ref{lm:thilowbound} it follows that if $\rho>\omega_\mu(\cH,\tau Q_{\cF-\cF}(2\tau)/16)$, then with probability at least $$\mathscr{P}_1=1-2\exp\left(-\frac{NQ_\cH(2\tau)^3}{2(4-Q_\cH(2\tau))^2}\left(\frac{c}{\mathscr{G}(N)}\right)^\frac{1}{\eta_1}\right)-\frac{NQ_\cH(2\tau)c^\frac{1}{\eta_1}}{4\mathscr{G}(N)^\frac{1}{\eta_1}}\exp(-\mathscr{G}(N)),$$ for every $f\in \cF$ that satisfies $\|f-f^*\|_{L_2}\geq \rho$, 
\begin{align}
	\frac{1}{N}\sum_{i=1}^N(f-f^*)^2(X_i)\geq \frac{\tau^2\|f-f^*\|^2_{L_2}Q_\cH(2\tau)}{4}.
\end{align}
So, with probability at least $\mathscr{P}_1$, for every $f\in \cF$ that satisfies $\|f-f^*\|_{L_2}\geq \rho$,
\begin{align*}
	P_N\mathcal{L}_f\geq2\left( \frac{1}{N}\sum_{i=1}^N\xi_i(f-f^*)(X_i)-\expec{\xi(f-f^*)}{}\right)+\frac{\tau^2\|f-f^*\|^2_{L_2}Q_\cH(2\tau)}{4}. \numberthis\label{eq:PnLfintermedboundalpha}
\end{align*}
	When $\|f-f^*\|_{L_2}\geq \mathscr{A}(N)> 2(N\tau_0)^{-1/2}$, we have $\log (N\tau_0\|f-f^*\|_{L_2}^2)\leq 2(N\tau_0\|f-f^*\|_{L_2}^2)^{(1-\eta)/2}/(1-\eta)$. Under Conditions~(a), (c)-(i), and (d) of Assumption~\ref{as:datagenprocess}, using Lemma~\ref{lm:rio}, we get
	\begin{align*}
		&\pro\left(\abs{\frac{1}{N}\sum_{i=1}^N\xi_i(f-f^*)(X_i)-\expec{\xi(f-f^*)}{}}\geq \tau_0\|f-f^*\|_{L_2}^2\right)\\
		\leq& N\exp\left(-\frac{(N\tau_0\|f-f^*\|_{L_2}^2)^\eta}{C_1}\right)+\exp\left(-\frac{N^2\tau_0^2\|f-f^*\|_{L_2}^4}{C_2(1+NV)}\right)\\
		&~+\exp\left(-\frac{N\tau_0^2\|f-f^*\|_{L_2}^4}{C_3}\exp\left(\frac{(N\tau_0\|f-f^*\|_{L_2}^2)^{\eta(1-\eta)}}{C_4(\log (N\tau_0\|f-f^*\|_{L_2}^2))^\eta}\right)\right)\\
		\leq& N\exp\left(-\frac{(N\tau_0\|f-f^*\|_{L_2}^2)^\eta}{C_1}\right)+\exp\left(-\frac{N^2\tau_0^2\|f-f^*\|_{L_2}^4}{C_2(1+NV)}\right)\\
		&~+\exp\left(-\frac{N\tau_0^2\|f-f^*\|_{L_2}^4}{C_3}\exp\left(\frac{(1-\eta)^\eta(N\tau_0\|f-f^*\|_{L_2}^2)^\frac{{\eta(1-\eta)}}{2}}{C_42^\eta}\right)\right)\\
		\leq& N\exp\left(-\frac{(N\tau_0\mathscr{A}(N)^2)^\eta}{C_1}\right)+\exp\left(-\frac{N^2\tau_0^2\mathscr{A}(N)^4}{C_2(1+NV)}\right)\\
		&~+\exp\left(-\frac{N\tau_0^2\mathscr{A}(N)^4}{C_3}\exp\left(\frac{(1-\eta)^\eta(N\tau_0\mathscr{A}(N)^2)^\frac{{\eta(1-\eta)}}{2}}{C_42^\eta}\right)\right)\equiv \mathscr{P}_2, \numberthis\label{eq:noiseconcalpha}
	\end{align*}
	where $$V\leq \expec{\left(\xi_1(f-f^*)(X_1)\right)^2}{}+4\sum_{i\geq 0}\expec{B_i\left(\xi_1(f-f^*)(X_1)\right)^2}{},$$
	$\{B_i\}$ is some sequence such that $B_i\in[0,1]$, $\expec{B_i}{}\leq \beta(i)$ and $C_1,C_2,C_3$ are constants which depend on $c,\eta,\eta_1,\eta_2$. Observe that,
	\begin{align*}
		V\leq& \expec{\left(\xi_1(f-f^*)(X_1)\right)^2}{}+4\sum_{i\geq 0}\expec{B_i\left(\xi_1(f-f^*)(X_1)\right)^2}{}\\
		\leq & \expec{\left(\xi_1(f-f^*)(X_1)\right)^2}{}+4\sum_{i\geq 0}\sqrt{\expec{B_i^2}{}\expec{\left(\xi_1(f-f^*)(X_1)\right)^4}{}}\\
		\leq & \expec{\left(\xi_1(f-f^*)(X_1)\right)^2}{}+4\sqrt{\expec{\left(\xi_1(f-f^*)(X_1)\right)^4}{}}\sum_{i\geq 0}\sqrt{\expec{B_i}{}}\\
		\leq & \expec{\left(\xi_1(f-f^*)(X_1)\right)^2}{}+4\sqrt{\expec{\left(\xi_1(f-f^*)(X_1)\right)^4}{}}\sum_{i\geq 0}\sqrt{\beta(i)}\\
		\leq & \expec{\left(\xi_1(f-f^*)(X_1)\right)^2}{}+4\sqrt{\expec{\left(\xi_1(f-f^*)(X_1)\right)^4}{}}\sum_{i\geq 0}\exp(-ci^{\eta_1}/2)\\
		\leq & 2^\frac{2}{\eta_2}+C4^{1+\frac{2}{\eta_2}}.
\end{align*}
	Combining \eqref{eq:PnLfintermedboundalpha}, and \eqref{eq:noiseconcalpha}, with probability at least $\mathscr{P}_1-\mathscr{P}_2$, for every $f\in \cF$ that satisfies $\|f-f^*\|_{L_2}\geq \max(\rho,\mathscr{A}(N))$, we get 
	\begin{align*}
		P_NL_f\geq -2\tau_0\|f-f^*\|^2_{L_2}+\frac{\tau^2\|f-f^*\|^2_{L_2}Q_\cH(2\tau)}{4}.
	\end{align*}
	Choosing $\tau_0<\tau^2Q_\cH(2\tau)/8$, we have, $$P_NL_f>0.$$ But the empirical minimizer $\hat{f}$ satisfies $P_NL_{\hat{f}}\leq 0$. This implies, together with choosing $\mathscr{A}(N)=N^{-1/4+\iota}$, that with probability at least $\mathscr{P}=\mathscr{P}_1-\mathscr{P}_2$,
	\begin{align*}
		\|\hat{f}-f^*\|_{L_2}\leq \max(\omega_\mu(\cF-\cF,\tau Q_{\cF-\cF}(2\tau)/16),\mathscr{A}(N)),
	\end{align*}
where 
\begin{align*}
	&\mathscr{P}=1-2\exp\left(-\frac{NQ_\cH(2\tau)^3}{2(4-Q_\cH(2\tau))^2}\left(\frac{c}{\mathscr{G}(N)}\right)^\frac{1}{\eta_1}\right)-\frac{NQ_\cH(2\tau)c^\frac{1}{\eta_1}}{4\mathscr{G}(N)^\frac{1}{\eta_1}}\exp(-\mathscr{G}(N))\\
	-&N\exp\left(-\frac{(N^{\frac{1}{2}+2\iota}\tau_0)^\eta}{C_1}\right)-\exp\left(-\frac{N^{1+4\iota}\tau_0^2}{C_2(1+NV)}\right)
	-\exp\left(-\frac{N^{4\iota}\tau_0^2}{C_3}\exp\left((1-\eta)^\eta\frac{(N^{\frac{1}{2}+2\iota}\tau_0)^{\frac{\eta(1-\eta)}{2}}}{C_42^\eta}\right)\right).
\end{align*}
Choosing $\mathscr{G}(N)=N^{(1-r)\eta_1}$ for some $0<r<1$, we get,
\begin{align*}
	\mathscr{P}=&1-2\exp\left(-\frac{N^rQ_\cH(2\tau)^3c^\frac{1}{\eta_1}}{2(4-Q_\cH(2\tau))^2}\right)-\frac{N^rQ_\cH(2\tau)c^\frac{1}{\eta_1}}{4}\exp(-N^{(1-r)\eta_1})\\
	&~-N\exp\left(-\frac{(N^{\frac{1}{2}+2\iota}\tau_0)^\eta}{C_1}\right)-\exp\left(-\frac{N^{1+4\iota}\tau_0^2}{C_2(1+NV)}\right)\\
	&~-\exp\left(-\frac{N^{4\iota}\tau_0^2}{C_3}\exp\left((1-\eta)^\eta\frac{(N^{\frac{1}{2}+2\iota}\tau_0)^{\frac{\eta(1-\eta)}{2}}}{C_42^\eta}\right)\right),
\end{align*}
and 
\begin{align}
	\mu=\frac{N^rQ_\cH(2\tau)c^\frac{1}{\eta_1}}{4}.
\end{align}

We now prove \textbf{part 2}. Since Lemma~\ref{lm:th5.3eqv} only depends on Condition (a) and (b) of Assumption~\ref{as:datagenprocess}, Lemma~\ref{lm:th5.3eqv} remains unchanged in this case. To deal with the multiplier process we need a concentration result similar to Lemma~\ref{lm:rio}. So we use the concentration inequality we proved in Lemma~\ref{lm:heavytailconc}.
	When $\|f-f^*\|_{L_2}\geq \mathscr{A}(N)$, using Lemma~\ref{lm:heavytailconc} we have
	\begin{align*}
		&\pro\left(\abs{\frac{1}{N}\sum_{i=1}^N\xi_i(f-f^*)(X_i)-\expec{\xi(f-f^*)}{}}\geq \tau_0\|f-f^*\|_{L_2}^2\right)\\
		\leq& \frac{2^{\eta_2+3}}{(d_2\log N\tau_0\|f-f^*\|_{L_2}^2)^\frac{1-\eta_2}{\eta_1}}N(N\tau_0\|f-f^*\|_{L_2}^2)^{-(1+d_1(\eta_2-1))}+8N(N\tau_0\|f-f^*\|_{L_2}^2)^{-(1+d_2c')}\\
		&+2e^{-\frac{(N\tau_0\|f-f^*\|_{L_2}^2)^{2-2d_1}(d_2\log (N\tau_0\|f-f^*\|_{L_2}^2))^{1/\eta_1}}{9N}}\\
		\leq& \frac{2^{\eta_2+3}}{(d_2\log (N\tau_0\mathscr{A}(N)^2))^\frac{1-\eta_2}{\eta_1}}N(N\tau_0\mathscr{A}(N)^2)^{-(1+d_1(\eta_2-1))}+8N(N\tau_0\mathscr{A}(N)^2)^{-(1+d_2c')}\\
		&+2e^{-\frac{(N\tau_0\mathscr{A}(N)^2)^{2-2d_1}(d_2\log (N\tau_0\mathscr{A}(N)^2))^{1/\eta_1}}{9N}}\equiv \mathscr{P}_2. \numberthis\label{eq:noiseconcalphapoly}
	\end{align*}
	We will choose $d_1$ suitably to allow $\mathscr{A}(N)$ to decrease with $N$ as fast as possible while ensuring $\lim_{N\to \infty}\mathscr{P}_2\to 0$. Combining \eqref{eq:PnLfintermedboundalpha}, and \eqref{eq:noiseconcalphapoly}, with probability at least $\mathscr{P}_1-\mathscr{P}_2$, for every $f\in \cF$ that satisfies $\|f-f^*\|_{L_2}\geq \max(\rho,\mathscr{A}(N))$, we get 
	\begin{align*}
		P_NL_f\geq -2\tau_0\|f-f^*\|^2_{L_2}+\frac{\tau^2\|f-f^*\|^2_{L_2}Q_\cH(2\tau)}{4}.
	\end{align*}
	Choosing $\tau_0<\tau^2Q_\cH(2\tau)/8$, we have, $$P_NL_f>0.$$ But the empirical minimizer $\hat{f}$ satisfies $P_NL_{\hat{f}}\leq 0$. This implies, together with choosing $\mathscr{A}(N)=N^{-(1-1/\eta_2)/4+\iota}$, $d_1=1/(1+\eta_2)$, $d_2=(\eta_2-1)/(\eta_2+1)$, and $\iota<(1-1/\eta_2)/4$, that with probability at least $\mathscr{P}=\mathscr{P}_1-\mathscr{P}_2$,
	\begin{align*}
		\|\hat{f}-f^*\|_{L_2}\leq \max(\omega_\mu(\cF-\cF,\tau Q_{\cF-\cF}(2\tau)/16),\mathscr{A}(N)),
	\end{align*}
	where 
	\begin{align*}
		\mathscr{P}&=1-2\exp\left(-\frac{NQ_\cH(2\tau)^3}{2(4-Q_\cH(2\tau))^2}\left(\frac{c}{\mathscr{G}(N)}\right)^\frac{1}{\eta_1}\right)-\frac{NQ_\cH(2\tau)c^\frac{1}{\eta_1}}{4\mathscr{G}(N)^\frac{1}{\eta_1}}\exp(-\mathscr{G}(N))\\
		&-\frac{2^{\eta_2+3}\tau_0^{-\frac{2\eta_2}{1+\eta_2}}}{\left(\log \left(\tau_0N^{\frac{1}{2}+\frac{1}{2\eta_2}+2\iota}\right)/2\right)^\frac{1-\eta_2}{\eta_1}}N^{-\frac{4\iota \eta_2}{1+\eta_2}}-8\tau_0^{-\frac{2\eta_2}{1+\eta_2}}N^{-\frac{4\iota \eta_2}{1+\eta_2}}-2e^{-\frac{\tau_0^{\frac{2\eta_2}{1+\eta_2}}\left(\log \left(\tau_0N^{\frac{1}{2}+\frac{1}{2\eta_2}+2\iota}\right)/2\right)^{1/\eta_1}}{9}N^{\frac{4\iota \eta_2}{1+\eta_2}}}.
	\end{align*}
	Choosing $\mathscr{G}(N)=N^{(1-r)\eta_1}$ for some $0<r<1$, we get,
	\begin{align*}
		\mathscr{P}=&1-2\exp\left(-\frac{N^rQ_\cH(2\tau)^3c^\frac{1}{\eta_1}}{2(4-Q_\cH(2\tau))^2}\right)-\frac{N^rQ_\cH(2\tau)c^\frac{1}{\eta_1}}{4}\exp(-N^{(1-r)\eta_1})\\
		-&\frac{2^{\eta_2+3}\tau_0^{-\frac{2\eta_2}{1+\eta_2}}}{\left(\log \left(\tau_0N^{\frac{1}{2}+\frac{1}{2\eta_2}+2\iota}\right)/2\right)^\frac{1-\eta_2}{\eta_1}}N^{-\frac{4\iota \eta_2}{1+\eta_2}}-8\tau_0^{-\frac{2\eta_2}{1+\eta_2}}N^{-\frac{4\iota \eta_2}{1+\eta_2}}-2e^{-\frac{\tau_0^{\frac{2\eta_2}{1+\eta_2}}\left(\log \left(\tau_0N^{\frac{1}{2}+\frac{1}{2\eta_2}+2\iota}\right)/2\right)^{1/\eta_1}}{9}N^{\frac{4\iota \eta_2}{1+\eta_2}}},
	\end{align*}
	and 
	\begin{align}
		\mu=\frac{N^rQ_\cH(2\tau)c^\frac{1}{\eta_1}}{4}.
	\end{align}
\end{proof}
\begin{proof}[Proof of Corollary~\ref{cor:mainthm2}]
\label{rm:anhalf}
	Note that Assumption~\ref{as:normequiv} implies Condition (b) of Assumption~\ref{as:datagenprocess} as shown in Lemma 4.1 in \cite{pmlr-v35-mendelson14}. Under Assumption~\ref{as:normequiv} with $p=8$, using Cauchy-Schwarz inequality we have,
	\begin{align*}
		V\leq & \expec{\left(\xi_1(f-f^*)(X_1)\right)^2}{}+4\sqrt{\expec{\left(\xi_1(f-f^*)(X_1)\right)^4}{}}\sum_{i\geq 0}\exp(-ci^{\eta_1}/2)\\
		\leq& \sqrt{\expec{\xi_1^4}{}}\sqrt{\expec{\left((f-f^*)(X_1)\right)^4}{}}+4C\sqrt{\sqrt{\expec{\xi_1^8}{}}\sqrt{\expec{\left((f-f^*)(X_1)\right)^8}{}}}\\
		\leq & M_1^2\|f-f^*\|_{L_2}^2\left(\sqrt{\expec{\xi_1^4}{}}+4C\left(\expec{\xi_1^8}{}\right)^\frac{1}{4}\right)\\
		\leq& M_2^2\|f-f^*\|_{L_2}^2,
	\end{align*}
for some constant $M_2$. Then, from \eqref{eq:noiseconcalpha} we have,
	\begin{align*}
	&\pro\left(\abs{\frac{1}{N}\sum_{i=1}^N\xi_i(f-f^*)(X_i)-\expec{\xi(f-f^*)}{}}\geq \tau_0\|f-f^*\|_{L_2}^2\right)\\
	\leq& N\exp\left(-\frac{(N\tau_0\|f-f^*\|_{L_2}^2)^\eta}{C_1}\right)+\exp\left(-\frac{N^2\tau_0^2\|f-f^*\|_{L_2}^4}{C_2(1+NV)}\right)\\
	&~+\exp\left(-\frac{N\tau_0^2\|f-f^*\|_{L_2}^4}{C_3}\exp\left(\frac{(N\tau_0\|f-f^*\|_{L_2}^2)^{\eta(1-\eta)}}{C_4(\log (N\tau_0\|f-f^*\|_{L_2}^2))^\eta}\right)\right)\\
	\leq& N\exp\left(-\frac{(N\tau_0\|f-f^*\|_{L_2}^2)^\eta}{C_1}\right)+\exp\left(-\frac{N^2\tau_0^2\|f-f^*\|_{L_2}^4}{C_2(1+NM_2^2\|f-f^*\|_{L_2}^2)}\right)\\
	&~+\exp\left(-\frac{N\tau_0^2\|f-f^*\|_{L_2}^4}{C_3}\exp\left(\frac{(N\tau_0\|f-f^*\|_{L_2}^2)^{\eta(1-\eta)}}{C_4(\log (N\tau_0\|f-f^*\|_{L_2}^2))^\eta}\right)\right).
	\end{align*}
If $N\|f-f^*\|_{L_2}^2\geq N\mathscr{A}(N)^2\geq \max(1/M_2^2,1/\tau_0)$, then
\begin{align*}
	&\pro\left(\abs{\frac{1}{N}\sum_{i=1}^N\xi_i(f-f^*)(X_i)-\expec{\xi(f-f^*)}{}}\geq \tau_0\|f-f^*\|_{L_2}^2\right)\\
	\leq & N\exp\left(-\frac{(N\tau_0\|f-f^*\|_{L_2}^2)^\eta}{C_1}\right)+\exp\left(-\frac{N\tau_0^2\|f-f^*\|_{L_2}^2}{2C_2M_2^2}\right)\\
	&+\exp\left(-\frac{N\tau_0^2\|f-f^*\|_{L_2}^4}{C_3}\exp\left(\frac{(1-\eta)^\eta(N\tau_0\mathscr{A}(N)^2)^\frac{{\eta(1-\eta)}}{2}}{C_42^\eta}\right)\right),
\end{align*}
and the second term dominates the third term in the above expression. Now if choose $\mathscr{A}(N)=N^{-1/2+\iota}$, then with probability at least 
\begin{align*}
	\mathscr{P}=1-\exp\left(-\frac{N^{2\iota}\tau_0^2}{2C_2M_2^2}\right)				-\exp\left(-\frac{N^{4\iota-1}\tau_0^2}{C_3}\exp\left(\frac{(1-\eta)^\eta(N^{2\iota}\tau_0)^\frac{{\eta(1-\eta)}}{2}}{C_42^\eta}\right)\right),
	\end{align*}
		we get
\begin{align*}
	\|\hat{f}-f^*\|_{L_2}\leq \max\left(N^{-1/2+\iota},\omega_\mu(\cF-\cF,\tau Q_{\cF-\cF}(2\tau)/16)\right) .
\end{align*}
\end{proof}
\subsection{Proofs for convex loss}
Recall the decomposition \eqref{eq:decomp}
\begin{align*}
	P_NL_f\geq\frac{1}{16N}\sum_{i=1}^N \ell''(\widetilde{\xi}_i)(f-f^*)^2(X_i)+\frac{1}{N}\sum_{i=1}^N \ell'(\xi_i)(f-f^*)(X_i).
	\vspace{-.05in}
\end{align*}
Since $\cF$ is convex, we also have
\begin{align*}
    \expec{\ell'(\xi)(f-f^*)(X)}{}\geq 0.
\end{align*}
Then,
\begin{align}\label{eq:decompwithexpec}
	P_NL_f\geq\frac{1}{16N}\sum_{i=1}^N \ell''(\widetilde{\xi}_i)(f-f^*)^2(X_i)+\frac{1}{N}\sum_{i=1}^N (\ell'(\xi_i)(f-f^*)(X_i)-\expec{\ell'(\xi_i)(f-f^*)(X_i)}{}).
	\vspace{-.05in}
\end{align}
Now our goal is to establish a lower bound (Proposition~\ref{prop:theorem4.7equiv}) on the first term of the RHS of \eqref{eq:decompwithexpec}, and a two-sided bound (\eqref{eq:noiseconcalphacon} and \eqref{eq:noiseconcalphapolycon}) on the second term when $\|f-f^*\|_{L_2}$ is large. Combining these bounds we will show that if $\|f-f^*\|_{L_2}$ is large then $P_NL_f>0$ which implies $f$ cannot be a minimizer of empirical risk because for the minimizer $\hat{f}$ we have $P_NL_{\hat{f}}\leq 0$. 
Let $\rho(t_1,t_2)\coloneqq\inf\{l''(x):x\in[t_1,t_2], 0\leq t_1< t_2\}$. First we prove the following extension of bounded difference inequality to the $\beta$-mixing sequence which we will use frequently in our proofs.  

\begin{lemma}[Bounded difference inequality for strictly stationary $\beta$-mixing sequence]\label{lm:boundeddiff}
Let $\{U_i\}_{i=1}^N$ be a sample from a strictly stationary $\beta$-mixing sequence, $|U_i|\leq M$, and $\expec{U_i}{}=U^*$. Let $N> a,b,\mu>0$ be such that $(a+b)\mu=N$. Then with probability at least $1-\exp\left(-\frac{(t-2b\mu)^2}{2\mu a^2M^2}\right)-2M(\mu-1)\beta(b)$, we have $\forall t> 2b\mu$,
\begin{align*}
    \sum_{i=1}^NU_i\leq NU^*+t.
\end{align*}
\end{lemma}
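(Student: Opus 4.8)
The plan is to reproduce, in an abstract form, the blocking argument already carried out in the proof of Lemma~\ref{lm:thilowbound}, since Lemma~\ref{lm:boundeddiff} is precisely its generic counterpart for an arbitrary bounded, mean-$U^*$, stationary $\beta$-mixing summand. First I would recenter: writing $\bar U_i = U_i - U^*$, it suffices to bound $\sum_{i=1}^N \bar U_i$, where each $\bar U_i$ has mean zero and $|\bar U_i|\le 2M$. I then partition $\{1,\dots,N\}$ into the $\mu$ ``head'' blocks of length $a$ and the $\mu$ ``tail'' blocks of length $b$, exactly as in \eqref{eq:partition}, so that $\sum_{i=1}^N \bar U_i = A + B$, where $A$ collects the head-block terms and $B$ the tail-block terms.

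The tail part is handled deterministically. Since $B$ is a sum of $b\mu$ terms each bounded in absolute value by $2M$, we have $|B|\le 2Mb\mu$; this is the source of the offset and forces $t$ to exceed it (written $2b\mu$ in the statement, up to the bookkeeping of the scale $M$, as in the crude $U_i\le M$ estimate used in \eqref{eq:blockpsi}). Consequently the event $\{\sum_{i=1}^N U_i > NU^* + t\}$ is contained in $\{A > t - 2b\mu\}$, and it remains only to bound $\pro(A > t - 2b\mu)$.

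For this I would pass from the dependent head blocks $Z_1^{(a)},\dots,Z_\mu^{(a)}$ to an independent sequence $\tilde Z_1^{(a)},\dots,\tilde Z_\mu^{(a)}$ with matching block marginals, invoking Corollary~2.7 of~\cite{yu1994rates}; transferring the probability of the half-space event $\{A > t-2b\mu\}$ across this coupling costs at most $2M(\mu-1)\beta(b)$ in total variation. On the decoupled sequence, the head-block functional $W(\tilde Z_1^{(a)},\dots,\tilde Z_\mu^{(a)}) = \sum_{i=1}^\mu\sum_{j=1}^a (\tilde U_{(i-1)(a+b)+j}-U^*)$ is a function of $\mu$ independent coordinates with $\E W = 0$, and perturbing a single block changes $W$ by at most $2aM$ (a block sum of $a$ terms each in $[-M,M]$ has range $2aM$). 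McDiarmid's bounded-difference inequality then gives $\pro(W \ge s) \le \exp\!\big(-2s^2/(\mu (2aM)^2)\big) = \exp\!\big(-s^2/(2\mu a^2 M^2)\big)$ with $s = t-2b\mu$, matching the claimed exponent. A union bound over the coupling error and this Gaussian-type tail yields the stated probability.

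I expect the main obstacle to be the coupling step rather than the concentration step: one must verify that Corollary~2.7 of~\cite{yu1994rates} applies to the indicator of the event $\{A > t-2b\mu\}$ as a bounded functional of the head blocks, and correctly book-keep the resulting total-variation error in terms of the number of blocks $\mu-1$ and the inter-block gap $b$ to obtain the $2M(\mu-1)\beta(b)$ term. Once the bounded-difference constant $2aM$ is identified, the McDiarmid step and the final assembly are routine and parallel \eqref{eq:psitildeconc}–\eqref{eq:dependmcdiarmid}.
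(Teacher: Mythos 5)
Your proposal is correct and follows essentially the same route as the paper's own proof: deterministic bounding of the tail-block contribution (giving the $t-2b\mu$ offset), Yu's Corollary~2.7 to replace the dependent head blocks by independent copies at cost $2M(\mu-1)\beta(b)$, and McDiarmid's inequality with per-block bounded difference $2aM$ yielding the exponent $(t-2b\mu)^2/(2\mu a^2M^2)$. You even flag the same bookkeeping looseness (the missing factor of $M$ in the tail offset $2b\mu$) that is present in the paper's statement, so nothing further is needed.
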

\begin{proof}
Consider the partition as in \eqref{eq:partition}. Then, using Corollary 2.7 of \cite{yu1994rates}, we get $\forall t> 2b\mu$, 
\begin{align*}
    &\mathbb{P}\left(\sum_{i=1}^N (U_i-U^*)\geq t\right)\\
    \leq&\mathbb{P}\left(\sum_{i=1}^\mu\sum_{j=1}^a (U_i-U^*)\geq t-2b\mu\right)\\
    \leq &\mathbb{P}\left(\sum_{i=1}^\mu\sum_{j=1}^a (\tilde{U}_{(a+b)(i-1)+j}-U^*)\geq t-2b\mu\right)+2M(\mu-1)\beta(b).
\end{align*}
where $\sum_{j=1}^a (\tilde{U}_{(a+b)(i-1)+j}-U^*)$ is an $\iid$ sequence for $i=1,2,\cdots,\mu$. Using bounded difference inequality,
\begin{align*}
    \mathbb{P}\left(\sum_{i=1}^N (U_i-U^*)\geq t\right)
    \leq  \exp\left(-\frac{(t-2b\mu)^2}{2\mu a^2M^2}\right)+2M(\mu-1)\beta(b).
\end{align*}
So with probability at least $1-\exp\left(-\frac{(t-2b\mu)^2}{2\mu a^2M^2}\right)-2M(\mu-1)\beta(b)$,
\begin{align*}
    \sum_{i=1}^NU_i\leq NU^*+t.
\end{align*}
\end{proof}
Lemma~\ref{lm:Xiupperbound}-\ref{lm:Xiuplowbound} are needed to prove Lemma~\ref{lm:thm4.3equiv} which is the main result needed to prove Proposition~\ref{prop:theorem4.7equiv}. 
\begin{lemma}\label{lm:Xiupperbound}
Let $X_i, i=1,2,\cdots,N$ be a sample from a sequence for which condition (a) of Assumption~\ref{as:datagenprocess} is true. For every $0< Q_\cH(2\tau) <1$, we have that with probability at least $1-c_1 Q_\cH(2\tau) ^{1-\frac{1}{\eta_1}} N^{\eta_1/(1+\eta_1)}e^{-c_2 Q_\cH(2\tau) ^{1+\frac{1}{\eta_1}}N^{\eta_1/(1+\eta_1)}}$, for some constants $c_1,c_2>0$, there is a subset $S\subset\{1,2,\cdots,N\}$ such that $\abs{S}\geq N(1- Q_\cH(2\tau) )$, and $\forall i\in S$,
\begin{align*}
    \abs{X_i}\leq \frac{2\|X_i\|_{L_2}}{\sqrt{ Q_\cH(2\tau) }}.\numberthis\label{eq:xinotverylarge}
\end{align*}
\end{lemma}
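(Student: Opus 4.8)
Write $\epsilon := Q_\cH(2\tau) \in (0,1)$, and observe that by strict stationarity all $X_i$ share the same law, so $\|X_i\|_{L_2} = \sigma$ for a common $\sigma$. The plan is to control the number of indices at which $|X_i|$ exceeds $2\sigma/\sqrt\epsilon$. For a single index, Markov's inequality applied to $X_i^2$ gives $\Prob(|X_i| > 2\sigma/\sqrt\epsilon) \le \E[X_i^2]/(4\sigma^2/\epsilon) = \epsilon/4$. Introduce the Bernoulli variables $U_i := \mathbbm{1}\{|X_i| > 2\sigma/\sqrt\epsilon\}$, which are measurable functions of the $X_i$ and hence again form a strictly stationary, exponentially $\beta$-mixing sequence (with the same coefficients), bounded by $M=1$, with common mean $U^* = \E U_1 \le \epsilon/4$. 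Setting $S := \{i : U_i = 0\}$ we have $|S| = N - \sum_{i=1}^N U_i$, so it suffices to prove $\sum_{i=1}^N U_i \le N\epsilon$ on a high-probability event; this immediately yields $|S| \ge N(1-\epsilon)$ together with the pointwise bound \eqref{eq:xinotverylarge}.

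\textbf{Concentration via the $\beta$-mixing bounded-difference inequality.} I would then invoke Lemma~\ref{lm:boundeddiff} for the sequence $U_i$ with $M = 1$ and $U^* \le \epsilon/4$. Choosing the deviation level $t$ so that $NU^* + t \le N\epsilon$ (e.g. $t = 3N\epsilon/4$, which is admissible once $2b\mu < t$) the lemma gives $\sum_{i=1}^N U_i \le NU^* + t \le N\epsilon$ with probability at least $1 - \exp(-(t-2b\mu)^2/(2\mu a^2)) - 2(\mu-1)\beta(b)$, where $(a+b)\mu = N$ are the block lengths to be selected. Using condition (a), $\beta(b) \le e^{-cb^{\eta_1}}$, so the mixing term is at most $2\mu e^{-cb^{\eta_1}}$.

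\textbf{Choice of block lengths.} The remaining task is to pick $a,b,\mu$ so that the two error terms combine into the stated expression. Mirroring the parameter choices of Lemma~\ref{lm:thilowbound}, I would take the number of blocks to scale as $\mu \asymp \epsilon^{1-1/\eta_1}N^{\eta_1/(1+\eta_1)}$ and the gap length as $b \asymp \epsilon^{(1+1/\eta_1)/\eta_1}N^{1/(1+\eta_1)}$, so that $a = N/\mu - b \asymp \epsilon^{-(1-1/\eta_1)}N^{1/(1+\eta_1)}$ and the total gap mass $b\mu \asymp \epsilon^{1+\eta_1^{-2}}N = o(N\epsilon)$, keeping the admissibility constraint $2b\mu < t$ valid for small $\epsilon$. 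With these choices the mixing term takes the advertised form $c_1 \epsilon^{1-1/\eta_1}N^{\eta_1/(1+\eta_1)}\exp(-c_2\epsilon^{1+1/\eta_1}N^{\eta_1/(1+\eta_1)})$, while the bounded-difference term $\exp(-(t-2b\mu)^2/(2\mu a^2)) = \exp(-\Theta(\epsilon^2\mu))$ is of no larger order and is folded into it (after adjusting $c_1,c_2$). Collecting the two contributions then proves the claim.

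\textbf{Main obstacle.} The conceptual steps are routine; the delicate part is the block-length bookkeeping in the final step. One must choose $a,b,\mu$ so that the sub-Gaussian deviation term is dominated by the mixing term and simultaneously land the mixing term precisely on the stated prefactor/exponent pair, all while respecting $(a+b)\mu = N$ and $t > 2b\mu$. Balancing these competing requirements — in particular verifying that the deviation term $\exp(-\Theta(\epsilon^2\mu))$ remains subdominant over the relevant range of $\eta_1$, and absorbing the residual polynomial factors and numerical constants into $c_1$ and $c_2$ — is where essentially all of the work resides.
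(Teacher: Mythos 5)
Your proposal is correct and follows essentially the same route as the paper's proof: Markov's inequality applied to the indicator variables, then the paper's bounded-difference inequality for strictly stationary $\beta$-mixing sequences (Lemma~\ref{lm:boundeddiff}) with $M=1$, $t=3NQ_\cH(2\tau)/4$, and a blocking scheme with $\mu \asymp Q_\cH(2\tau)^{1-1/\eta_1}N^{\eta_1/(1+\eta_1)}$. The only difference is bookkeeping: your gap length $b$ carries an extra factor $Q_\cH(2\tau)^{1/\eta_1^2}$, which makes the mixing term land exactly on the stated exponent $Q_\cH(2\tau)^{1+1/\eta_1}$, whereas the paper's choice $b=\bigl(Q_\cH(2\tau)/c\bigr)^{1/\eta_1}N^{1/(1+\eta_1)}$ yields the slightly stronger exponent $Q_\cH(2\tau)$ (which implies the stated bound since $Q_\cH(2\tau)<1$).
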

\begin{proof}
Let $\zeta_i=\mathbbm{1}\left(\abs{X_i}\geq \frac{2\|X_i\|_{L_2}}{\sqrt{ Q_\cH(2\tau) }}\right)$. 
Then, by Markov's inequality,  $$\expec{\zeta_i}{}=\mathbb{P}\left(\abs{X_i}\geq \frac{2\|X_i\|_{L_2}}{\sqrt{ Q_\cH(2\tau) }}\right)\leq  Q_\cH(2\tau) /4.$$
Then using Lemma~\ref{lm:boundeddiff}, we have with probability at least $1-\exp\left(-\frac{(t-2b\mu)^2}{2\mu a^2}\right)-2(\mu-1)\beta(b)$,
\begin{align*}
    \sum_{i=1}^N\zeta_i\leq \frac{N Q_\cH(2\tau) }{4}+t.
\end{align*}
Now, setting
		\begin{align}\label{eq:tabmucommonchoice}
			t=\frac{3N Q_\cH(2\tau) }{4} \quad a=\frac{(4- Q_\cH(2\tau) )N^{\frac{1}{1+\eta_1}}}{c^\frac{1}{\eta_1} Q_\cH(2\tau) ^{1-\frac{1}{\eta_1}}} \quad b=\frac{ Q_\cH(2\tau) ^\frac{1}{\eta_1} N^{\frac{1}{1+\eta_1}}}{c^\frac{1}{\eta_1}} \quad \mu=\frac{N^{\frac{\eta_1}{1+\eta_1}}c^\frac{1}{\eta_1} Q_\cH(2\tau) ^\frac{\eta_1-1}{\eta_1}}{4},
		\end{align}
		we have $\sum_{i=1}^N\zeta_i\leq N Q_\cH(2\tau)$, with probability at least
\begin{align*}
1-c_1 Q_\cH(2\tau) ^{1-\frac{1}{\eta_1}} N^{\eta_1/(1+\eta_1)}e^{-c_2 Q_\cH(2\tau)  N^{\eta_1/(1+\eta_1)}}.
\end{align*}
\end{proof}
\begin{lemma}\label{lm:Xilowerbound}
Let $X_i, i=1,2,\cdots,N$ be a sample from a sequence for which condition (a) and (b) of Assumption~\ref{as:datagenprocess} is true. Then with probability at least with $$1-c_1Q_\cH(2\tau)^{1-\frac{1}{\eta_1}} N^{\eta_1/(1+\eta_1)}e^{-c_2Q_\cH(2\tau) N^{\eta_1/(1+\eta_1)}},$$ there is a subset $S\subset \{1,2,\cdots,N\}$ such that $\abs{S}\geq 3NQ_\cH(2\tau)/4$, and $\forall i\in S$, $\abs{X_i}\geq 2\tau\|X_i\|_{L_2}$.
\end{lemma}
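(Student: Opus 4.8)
The plan is to mirror the argument for Lemma~\ref{lm:Xiupperbound}, but to replace the moment (Markov) upper bound on the indicators by the \emph{small-ball lower bound} of condition (b), and then to invoke the bounded-difference inequality (Lemma~\ref{lm:boundeddiff}) in the lower-deviation direction. Reading $X_i$ as $h(X_i)$ for a fixed $h\in\cH$ (so that $\|X_i\|_{L_2}=\|h\|_{L_2}$ is constant by stationarity), define the indicators $\zeta_i=\mathbbm{1}\left(|X_i|\geq 2\tau\|X_i\|_{L_2}\right)$. Condition (b) of Assumption~\ref{as:datagenprocess} is exactly the statement that $\pro\left(|X_i|\geq 2\tau\|X_i\|_{L_2}\right)\geq Q_\cH(2\tau)$, so $\expec{\zeta_i}{}\geq Q_\cH(2\tau)$ for every $i$. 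It then suffices to show $\sum_{i=1}^N\zeta_i\geq 3NQ_\cH(2\tau)/4$ with the claimed probability, since $S:=\{i:\zeta_i=1\}$ is precisely the desired index set.

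To lower-bound $\sum_i\zeta_i$, I would apply Lemma~\ref{lm:boundeddiff} to the bounded, stationary $\beta$-mixing sequence $U_i=-\zeta_i$ (with $M=1$ and $U^*=-\expec{\zeta_i}{}\leq -Q_\cH(2\tau)$). This gives, with probability at least $1-\exp\left(-(t-2b\mu)^2/(2\mu a^2)\right)-2(\mu-1)\beta(b)$, the bound $-\sum_i\zeta_i\leq -N\expec{\zeta_i}{}+t$, i.e. $\sum_i\zeta_i\geq NQ_\cH(2\tau)-t$. Choosing the deviation level so that $NQ_\cH(2\tau)-t\geq 3NQ_\cH(2\tau)/4$, that is $t\leq NQ_\cH(2\tau)/4$, yields $|S|\geq 3NQ_\cH(2\tau)/4$. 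Finally I would insert block lengths of the same scaling as in \eqref{eq:tabmucommonchoice}, use the exponential mixing bound $\beta(b)\leq e^{-cb^{\eta_1}}$, and simplify: with $b\asymp Q_\cH(2\tau)^{1/\eta_1}N^{1/(1+\eta_1)}/c^{1/\eta_1}$ one gets $cb^{\eta_1}\asymp Q_\cH(2\tau)N^{\eta_1/(1+\eta_1)}$, so the mixing term becomes exactly the stated $c_1Q_\cH(2\tau)^{1-1/\eta_1}N^{\eta_1/(1+\eta_1)}e^{-c_2Q_\cH(2\tau)N^{\eta_1/(1+\eta_1)}}$, while the McDiarmid term is of comparable or smaller order and is absorbed into the constants.

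The main obstacle is the compatibility constraint $t>2b\mu$ demanded by Lemma~\ref{lm:boundeddiff}. In the upper-bound lemma the admissible deviation budget $t=3NQ_\cH(2\tau)/4$ comfortably exceeded the dropped-block cost $2b\mu=NQ_\cH(2\tau)/2$; here the target forces $t\leq NQ_\cH(2\tau)/4$, which is \emph{smaller} than that cost under the block lengths of \eqref{eq:tabmucommonchoice}. I would resolve this by shrinking the gap-block length $b$ by a constant factor — which, since $a\gg b$ for small $Q_\cH(2\tau)$, leaves $\mu\asymp N/a$ and hence the leading-order scalings unchanged — so that $2b\mu\leq NQ_\cH(2\tau)/8$; then $t=NQ_\cH(2\tau)/4$ satisfies $t>2b\mu$ and still delivers $|S|\geq 3NQ_\cH(2\tau)/4$. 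Verifying that this rescaling preserves both the order of the McDiarmid exponent and the claimed form of the mixing term, so that only the constants $c_1,c_2$ change, is the one place where a little care is needed; the remainder is a direct transcription of the proof of Lemma~\ref{lm:Xiupperbound}.
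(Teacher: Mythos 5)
Your proposal follows essentially the same route as the paper's proof: introduce the indicators $\zeta_i=\mathbbm{1}\left(\abs{X_i}\geq 2\tau\|X_i\|_{L_2}\right)$, lower-bound their mean by $Q_\cH(2\tau)$ via the small-ball condition (b), and apply the $\beta$-mixing bounded-difference inequality (Lemma~\ref{lm:boundeddiff}) in the lower-deviation direction (i.e., to $U_i=-\zeta_i$) to conclude. You are in fact more careful than the paper, whose one-line proof never specifies block lengths and silently ignores the compatibility constraint $t>2b\mu$; your observation that the choice \eqref{eq:tabmucommonchoice} would force $t\leq NQ_\cH(2\tau)/4<2b\mu$, together with your fix of shrinking the gap-block length $b$ by a constant factor (which changes only the constants $c_1,c_2$ in the stated probability), is exactly the patch the argument needs.
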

\begin{proof}
Let $\zeta_i=\mathbbm{1}\left(\abs{X_i}\geq 2\tau\|X_i\|_{L_2}\right)$. Using condition (b) of Assumption~\ref{as:datagenprocess}, we have $\expec{\zeta_i}{}>Q_\cH(2\tau)$. Then using Lemma~\ref{lm:boundeddiff}, with probability at least $$1-c_1Q_\cH(2\tau)^{1-\frac{1}{\eta_1}} N^{\eta_1/(1+\eta_1)}e^{-c_2Q_\cH(2\tau) N^{\eta_1/(1+\eta_1)}},$$ 
we have
\begin{align*}
    3NQ_\cH(2\tau)/4\leq \sum_{i=1}^N\zeta_i\leq 5N\expec{\zeta_i}{}/4.
\end{align*}
\end{proof}
\begin{lemma}\label{lm:Xiuplowbound}
Let $X_i, i=1,2,\cdots,N$ be a sample from a sequence for which conditions (a) and (b) of Assumption~\ref{as:datagenprocess} is true. Then with probability at least with $$1-c_1Q_\cH(2\tau)^{1-\frac{1}{\eta_1}} N^{\eta_1/(1+\eta_1)}e^{-c_2Q_\cH(2\tau) N^{\eta_1/(1+\eta_1)}},$$ there is a subset $S\subset \{1,2,\cdots,N\}$ such that $\abs{S}\geq NQ_\cH(2\tau)/2$, and $\forall i\in S$,
\begin{align*}
    2\tau\|X_i\|_{L_2}\leq \abs{X_i}\leq \frac{2\|X_i\|_{L_2}}{\sqrt{Q_\cH(2\tau)}}.
\end{align*}
\end{lemma}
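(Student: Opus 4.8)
The plan is to prove the two-sided bound by a single combined-indicator argument rather than by intersecting the index sets produced by Lemma~\ref{lm:Xilowerbound} and Lemma~\ref{lm:Xiupperbound}. A naive intersection fails: Lemma~\ref{lm:Xilowerbound} only guarantees $3NQ_\cH(2\tau)/4$ indices satisfying the lower bound, while the exceptional set in Lemma~\ref{lm:Xiupperbound} may be as large as $NQ_\cH(2\tau)$, so the set difference $3NQ_\cH(2\tau)/4-NQ_\cH(2\tau)$ is negative and carries no information. I therefore treat both constraints simultaneously through one $[0,1]$-valued functional.

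First I would define the indicator of the joint event,
\[
\zeta_i = \mathbbm{1}\!\left(2\tau\|X_i\|_{L_2}\leq |X_i|\leq \frac{2\|X_i\|_{L_2}}{\sqrt{Q_\cH(2\tau)}}\right),
\]
and lower-bound its mean. Writing $A_i=\{|X_i|\geq 2\tau\|X_i\|_{L_2}\}$ and $C_i=\{|X_i|> 2\|X_i\|_{L_2}/\sqrt{Q_\cH(2\tau)}\}$, I have the pointwise inequality $\zeta_i=\mathbbm{1}(A_i)-\mathbbm{1}(A_i\cap C_i)\geq \mathbbm{1}(A_i)-\mathbbm{1}(C_i)$, hence $\mathbb{E}[\zeta_i]\geq \mathbb{P}(A_i)-\mathbb{P}(C_i)$. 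Condition (b) of Assumption~\ref{as:datagenprocess} (the small-ball condition) gives $\mathbb{P}(A_i)\geq Q_\cH(2\tau)$, while Markov's inequality applied to $X_i^2$ — exactly the estimate used in the proof of Lemma~\ref{lm:Xiupperbound} — gives $\mathbb{P}(C_i)\leq Q_\cH(2\tau)/4$. Combining these yields $\mathbb{E}[\zeta_i]\geq 3Q_\cH(2\tau)/4$.

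Next I would convert this mean bound into a cardinality bound on $S=\{i:\zeta_i=1\}$ by a lower-tail concentration for $\sum_{i=1}^N\zeta_i$. Since Lemma~\ref{lm:boundeddiff} is stated as an upper-tail inequality, I apply it to the bounded variables $U_i=1-\zeta_i$ (so $|U_i|\leq 1$ and $U^\ast=1-\mathbb{E}[\zeta_i]$), which rearranges to $\sum_i\zeta_i\geq N\mathbb{E}[\zeta_i]-t\geq 3NQ_\cH(2\tau)/4 - t$. Choosing $t$ of order $NQ_\cH(2\tau)/4$ together with the blocking parameters $a,b,\mu$ as in~\eqref{eq:tabmucommonchoice} produces $\sum_i\zeta_i\geq NQ_\cH(2\tau)/2$; the $\beta(b)$ contribution is absorbed using condition (a) and $\beta(b)\leq\exp(-cb^{\eta_1})$, and the failure probability collapses to the exponential-in-$N^{\eta_1/(1+\eta_1)}$ form quoted in the statement. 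Since $|S|=\sum_i\zeta_i\geq NQ_\cH(2\tau)/2$ and every $i\in S$ satisfies both inequalities by construction, this finishes the proof.

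The main obstacle is precisely the realization that the two one-sided lemmas cannot be combined by intersection: the Markov control of $\mathbb{P}(C_i)$ up to $Q_\cH(2\tau)/4$ and the concentration slack in Lemma~\ref{lm:Xiupperbound} together inflate the exceptional set beyond the count produced by the small-ball condition. The combined-indicator route circumvents this entirely, because it concentrates a single functional whose expectation already exceeds $3Q_\cH(2\tau)/4$, so one application of the $\beta$-mixing bounded-difference inequality with slack $NQ_\cH(2\tau)/4$ suffices.
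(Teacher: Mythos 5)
Your proof is correct in substance, and it takes a genuinely different route from the paper's. The paper disposes of this lemma in one line --- ``the proof is immediate from Lemma~\ref{lm:Xiupperbound} and Lemma~\ref{lm:Xilowerbound}'' --- i.e., by intersecting the two index sets, and your objection to that route is well founded: as those lemmas are actually stated, the set of indices violating the upper bound may have as many as $NQ_\cH(2\tau)$ elements while the set satisfying the lower bound is only guaranteed $3NQ_\cH(2\tau)/4$ elements, so inclusion--exclusion gives $|S_1\cap S_2|\geq N(1-Q_\cH(2\tau))+\tfrac{3}{4}NQ_\cH(2\tau)-N=-\tfrac{1}{4}NQ_\cH(2\tau)$, which is vacuous. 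The intersection can only be rescued by re-running the proofs of both lemmas with smaller concentration slacks (e.g., forcing the exceptional set in Lemma~\ref{lm:Xiupperbound} to be at most $NQ_\cH(2\tau)/4$), not by invoking them as black boxes. Your combined-indicator argument sidesteps this bookkeeping: $\mathbb{E}[\zeta_i]\geq Q_\cH(2\tau)-Q_\cH(2\tau)/4=3Q_\cH(2\tau)/4$ follows from the small-ball condition plus the same Markov estimate used in Lemma~\ref{lm:Xiupperbound}, and a single application of Lemma~\ref{lm:boundeddiff} to $U_i=1-\zeta_i$ with slack $t$ of order $NQ_\cH(2\tau)/4$ gives $|S|=\sum_i\zeta_i\geq NQ_\cH(2\tau)/2$ directly, with a failure probability of the claimed shape (polynomial prefactor times $e^{-cN^{\eta_1/(1+\eta_1)}}$; the powers of $Q_\cH(2\tau)$ in the constants are no worse than the paper's own, which are already inconsistent between the statements and proofs of Lemmas~\ref{lm:Xiupperbound} and \ref{lm:Xilowerbound}). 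What the single-functional route buys is that only one concentration slack is paid, rather than one slack per lemma plus the set-intersection loss, which is exactly why it clears the $NQ_\cH(2\tau)/2$ threshold that the naive intersection misses.

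One detail in your write-up needs repair. With $a,b,\mu$ taken verbatim from \eqref{eq:tabmucommonchoice}, one has $b\mu=NQ_\cH(2\tau)/4$, hence $2b\mu=NQ_\cH(2\tau)/2>t$, violating the hypothesis $t>2b\mu$ of Lemma~\ref{lm:boundeddiff}. You must shrink the block length $b$ by a constant factor (say, replace $b$ by $b/4$, enlarging $a$ accordingly so that $(a+b)\mu=N$ still holds), which gives $2b\mu=NQ_\cH(2\tau)/8<t$ while keeping $\beta(b)\leq\exp(-cb^{\eta_1})$ exponentially small in $N^{\eta_1/(1+\eta_1)}$, so the probability retains the same form with only absolute constants changed. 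The paper's Lemma~\ref{lm:Xilowerbound} implicitly requires the same adjustment for its lower-tail application of Lemma~\ref{lm:boundeddiff}, so this is a shared, and easily fixed, looseness rather than a flaw specific to your argument.
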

\begin{proof}
The proof is immediate from Lemma~\ref{lm:Xiupperbound}, and Lemma~\ref{lm:Xilowerbound}.
\end{proof}
\begin{lemma}\label{lm:thm4.3equiv}
Let $\cH$ be a class of function which is star-shaped around $0$ and satisfies condition (b) of Assumption~\ref{as:datagenprocess}. If $\zeta_1\sim2\tau Q_\cH(2\tau)^{3/2}$, $\zeta_2\sim 2\tau Q_\cH(2\tau)$, and $r=\|h\|_{L_2}>\omega_Q(\zeta_1,\zeta_2)$, there is a set $V_r\subset \cH\cap rS(L_2)$ such that there is an event $\mathcal{A}$ with probability at least $1-c_6Q_\cH(2\tau)^{1-\frac{1}{\eta_1}} N^{\eta_1/(1+\eta_1)}e^{-c_7 Q_\cH(2\tau)  N^{\eta_1/(1+\eta_1)}}$ we have:
\begin{enumerate}
    \item \begin{align}
        \abs{V_r}\leq \exp(c_2' Q_\cH(2\tau)  N^{\eta_1/(1+\eta_1)}/2),
    \end{align}
    where $c_2'\leq 1/1000$
    \item For every $v\in V_r$ there is a subset $S_v\subset\{1,2,\cdots,N\}$ such that $|S_v|\geq Q_\cH(2\tau)  N/2$, and for every $i\in S_v$,
    \begin{align}
         2\tau  r\leq \abs{v(X_i)}\leq \frac{c_3r}{\sqrt{ Q_\cH(2\tau) }}.
    \end{align}
    \item For every $h \in \cH\cap rS(L_2)$ there is some $v\in V_r$, and a subset $K_h\subset S_v$, containing at least $3/4$ of the coordinates of $S_v$, and for every $k \in K_h$,
    \begin{align}
        \tau\|h\|_{L_2}\leq \abs{h(X_k)}\leq c_{9}\left( 2\tau +\frac{1}{\sqrt{ Q_\cH(2\tau) }}\right)\|h\|_{L_2} ,\label{eq:hisoforderhL2}
    \end{align}
    and $h(X_k)$ and $v(X_k)$ have the same sign. 
\end{enumerate}
\end{lemma}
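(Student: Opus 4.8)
The plan is to prove this as the exponentially $\beta$-mixing analogue of the net-construction lemma at the heart of Mendelson's small-ball method for general convex losses, using the pointwise estimates of Lemmas~\ref{lm:Xiupperbound}--\ref{lm:Xiuplowbound} together with the blocking device of~\cite{yu1994rates} already employed in Lemma~\ref{lm:thilowbound}. Write $\mu=N^{\eta_1/(1+\eta_1)}$ as in Definition~\ref{def:omegadefcon}. Since $\cH$ is star-shaped around $0$ and the complexity functionals $\omega_1,\omega_2$ scale linearly in $r$, it suffices to work on the sphere $\cH\cap rS(L_2)$ for a fixed $r=\|h\|_{L_2}>\omega_Q(\cF-\cF,N,\zeta_1,\zeta_2)$. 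The whole statement is then established on a single event $\mathcal{A}$ obtained by intersecting, over the finite set $V_r$, the good events of Lemma~\ref{lm:Xiuplowbound}; the budget for this union bound is precisely the cardinality bound of Part~1.

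First I would dispose of Part~2. For a fixed $v\in\cH\cap rS(L_2)$ the stationary variable $v(X)$ has $\|v(X)\|_{L_2}=r$, so applying Lemma~\ref{lm:Xiuplowbound} with $v(X)$ in the role of the sampled variable produces, with probability at least $1-c_1Q_\cH(2\tau)^{1-\frac1{\eta_1}}\mu\,e^{-c_2Q_\cH(2\tau)\mu}$, a set $S_v$ with $|S_v|\ge NQ_\cH(2\tau)/2$ on which $2\tau r\le |v(X_i)|\le 2r/\sqrt{Q_\cH(2\tau)}$; this is Part~2 with $c_3=2$. Intersecting these events over $v\in V_r$ multiplies the failure probability by $|V_r|$, so if $|V_r|\le\exp\!\big(c_2'Q_\cH(2\tau)\mu/2\big)$ with $c_2'\le 1/1000\ll c_2$, the union-bounded event still has probability at least $1-c_6Q_\cH(2\tau)^{1-\frac1{\eta_1}}\mu\,e^{-c_7Q_\cH(2\tau)\mu}$ with $c_7=c_2-c_2'/2$, which is exactly the probability asserted for $\mathcal{A}$.

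The construction of $V_r$ and the verification of Parts~1 and~3 go hand in hand, and this is where both complexity measures in $\omega_Q=\max(\omega_1,\omega_2)$ enter. I would take $V_r$ to be a maximal subset of $\cH\cap rS(L_2)$ that is $\delta$-separated in $L_2(\pi)$ at the scale $\delta\sim\tau r\sqrt{Q_\cH(2\tau)}$. Since $r>\omega_Q\ge\omega_1$, a separated set larger than $\exp(c_2'Q_\cH(2\tau)\mu/2)$ would force the localized Gaussian width $\expec{\|G\|_{\cH\cap rD}}{}$ to exceed the level $\zeta_1 r N^{\eta_1/(2(1+\eta_1))}$ permitted by Definition~\ref{def:omegadefcon}, so Sudakov minoration yields the cardinality bound of Part~1. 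For Part~3, given $h$ take the nearest $v\in V_r$ and set $g=h-v$, so that $\|g\|_{L_2}\lesssim\delta$ and hence $\pro(|g(X)|\ge\tau r)\le\|g\|_{L_2}^2/(\tau r)^2\lesssim Q_\cH(2\tau)$. The empirical count $N^{-1}|\{k:|g(X_k)|\ge\tau r\}|$ is controlled uniformly over such differences by the Rademacher complexity $\omega_2=\omega_\mu$, transported to the $\mu$ decoupled blocks of Yu's coupling (Corollary~2.7 of~\cite{yu1994rates}) exactly as in Lemma~\ref{lm:thilowbound}, making the bad subset of $S_v$ at most $\tfrac14|S_v|$. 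Letting $K_h$ be the complementary three-quarters of $S_v$, on $K_h$ we have $|g(X_k)|<\tau r\le\tfrac12|v(X_k)|$, so $h(X_k)=v(X_k)+g(X_k)$ keeps the sign of $v(X_k)$ and obeys $|h(X_k)|\ge 2\tau r-\tau r=\tau\|h\|_{L_2}$, while $|h(X_k)|\le|v(X_k)|+|g(X_k)|\le c_9\big(2\tau+1/\sqrt{Q_\cH(2\tau)}\big)\|h\|_{L_2}$ by the Part~2 upper bound; this is~\eqref{eq:hisoforderhL2}.

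The main obstacle is the joint calibration of Parts~1 and~3 under dependence. The separation scale $\delta$ must be small enough that the nearest net point resolves the sign and order of magnitude of $h$ on three-quarters of $S_v$, yet the resulting packing number must stay below $\exp(c_2'Q_\cH(2\tau)\mu/2)$ so that the union bound over $V_r$ does not overwhelm the per-point failure probability $e^{-c_2Q_\cH(2\tau)\mu}$, which forces $c_2'\ll c_2$. Reconciling these two requirements while keeping $\omega_Q$ as the single governing complexity, and in particular transporting both the Sudakov-type packing bound (tied to $\omega_1$) and the uniform coordinate-count estimate (tied to $\omega_2=\omega_\mu$)---each inherently an $\iid$ statement---through Yu's blocking at the cost of $(\mu-1)\beta(b)$, is the delicate technical core of the argument.
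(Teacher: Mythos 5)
Your proposal follows essentially the same route as the paper's proof: a maximal separated subset of $\cH\cap rS(L_2)$ whose separation/cardinality trade-off is controlled by Sudakov minoration together with the $\omega_1$ (localized Gaussian width) constraint, a union bound of Lemma~\ref{lm:Xiuplowbound} over the net to get Parts 1 and 2, and for Part 3 a uniform control of the count of coordinates where $|h-h_v|$ exceeds $\tau r$ via a Lipschitz ramp surrogate, Yu's blocking, symmetrization and contraction against $\omega_2=\omega_\mu$, and bounded-difference concentration, finished by the triangle inequality for the sign and magnitude claims. The only deviations are calibration choices rather than a different argument: you fix the separation scale at $\tau r\sqrt{Q_\cH(2\tau)}$ and derive the cardinality bound, using a second-moment (Chebyshev) estimate for the population term, whereas the paper fixes the cardinality and derives the separation $\rho\lesssim \tau r\,Q_\cH(2\tau)$ from Sudakov, using a first-moment bound $\rho/(\tau r)$ — both yield the same $O(Q_\cH(2\tau))$ population term and the same conclusion.
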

\begin{proof}
Let $r=\|h\|_{L_2}>\omega_Q(\zeta_1,\zeta_2)$. Let $V_r\subset H\cap rS(L_2)$ be a maximal $\rho$-separated set such that $$\abs{V_r}\leq \exp(c_2' Q_\cH(2\tau)  N^{\eta_1/(1+\eta_1)}/2)$$ where $c_2'=\min(c_2,1/500)$. Applying Lemma~\ref{lm:Xiuplowbound} on all the elements of $V_r$, using union bound we obtain that with probability at least $$1-c_1 Q_\cH(2\tau) ^{1-\frac{1}{\eta_1}} N^{\eta_1/(1+\eta_1)}e^{-c_2 Q_\cH(2\tau)  N^{\eta_1/(1+\eta_1)}/2},$$ for every $v\in V_r$ there is a subset $S_v$ such that $\abs{S_v}\geq N Q_\cH(2\tau) /2$ and for all $i\in S_v$, we have
\begin{align*}
     2\tau \|v(X_i)\|_{L_2}\leq \abs{v(X_i)}\leq \frac{c_3\|v(X_i)\|_{L_2}}{\sqrt{ Q_\cH(2\tau) }}. \numberthis\label{eq:vXiuplowbound}
\end{align*}
Since we have assumed $r> \omega_1(\zeta_1)$, from Sudakov's inequality we have,
\begin{align*}
    \rho\leq c_4\frac{\sqrt{2}\expec{\|G\|_{\cH\cap rS(L_2)}}{}}{\sqrt{c_2 Q_\cH(2\tau)  N^{\eta_1/(1+\eta_1)}}}\leq \frac{c_5\zeta_1r}{\sqrt{ Q_\cH(2\tau) }}, \numberthis\label{eq:rhobound}
\end{align*}
where $c_5=\sqrt{2}c_4/\sqrt{c_2}$. For all $h\in\cH\cap rS(L_2)$, let $h_v\in V_r$ so that $\|h-h_v\|_{L_2}\leq \rho$. Now let $\delta_h=\mathbbm{1}_{\left(\abs{h-h_v}> \tau r\right)}$ and put
\begin{align}
    \Delta_r=\left\lbrace\delta_h:h\in\cH\cap rS(L_2)\right\rbrace.
\end{align}
Define a function $\psi_1(t)=\max(\min(t/( \tau  r),1),0)$. Observe that $\delta_h(X)\leq \psi_1\left(\abs{h-h_v}(X)\right)$. Now we want to show that the number of points where $\abs{h-h_v}> \tau r$ is small. 
\begin{align*}
    &\expec{\sup_{\delta_h\in \Delta_r}\frac{1}{N}\sum_{i=1}^N\delta_h(X_i)}{}\\
    \leq&\expec{\sup_{h\in\cH\cap rS(L_2)}\frac{1}{N}\sum_{i=1}^N\psi_1(\abs{h-h_v}(X_i))}{}\\
    \leq & \expec{\sup_{h\in\cH\cap rS(L_2)}\frac{1}{N}\sum_{i=1}^N\left(\psi_1(\abs{h-h_v}(X_i))-\expec{\psi_1(\abs{h-h_v}(X)}{}\right)}{}+\expec{\sup_{h\in\cH\cap rS(L_2)}\expec{\psi_1(\abs{h-h_v}(X)}{}}{},
    \end{align*}
    where $X\sim\pi$. Consider the partition introduced in \eqref{eq:partition}. Then, 
    \begin{align*}
    &\expec{\sup_{\delta_h\in \Delta_r}\frac{1}{N}\sum_{i=1}^N\delta_h(X_i)}{}\\
    \leq & \expec{\sup_{h\in\cH\cap rS(L_2)}\frac{1}{N}\sum_{i=1}^\mu\sum_{j=1}^a\left(\psi_1(\abs{h-h_v}(X_{(a+b)(i-1)+j}))-\expec{\psi_1(\abs{h-h_v}(X)}{}\right)}{}\\
    &+\frac{2b\mu}{N}+\frac{1}{ \tau r}\expec{\sup_{h\in\cH\cap rS(L_2)}\expec{\abs{h-h_v}(X)}{}}{}\\
    \leq & \frac{\mu}{N}\sum_{j=1}^a\expec{\sup_{h\in\cH\cap rS(L_2)}\frac{1}{\mu}\sum_{i=1}^\mu\left(\psi_1(\abs{h-h_v}(X_{(a+b)(i-1)+j}))-\expec{\psi_1(\abs{h-h_v}(X)}{}\right)}{}\\
    &+\frac{2b\mu}{N}+\frac{\rho}{ \tau r}\\
    \leq & \frac{\mu}{N}\sum_{j=1}^a\expec{\sup_{h\in\cH\cap rS(L_2)}\frac{1}{\mu}\sum_{i=1}^\mu\left(\psi_1(\abs{h-h_v}(\tilde{X}_{(a+b)(i-1)+j}))-\expec{\psi_1(\abs{h-h_v}(X)}{}\right)}{}\\
    &+2(\mu-1)\beta(a+b)+\frac{2b\mu}{N}+\frac{\rho}{ \tau r}.
\end{align*}
Now using symmetrization, we get
\begin{align*}
    \expec{\sup_{\delta_h\in \Delta_r}\frac{1}{N}\sum_{i=1}^N\delta_h(X_i)}{}&\leq \frac{\mu}{N}\sum_{j=1}^a\expec{\sup_{h\in\cH\cap rS(L_2)}\frac{1}{\mu}\sum_{i=1}^\mu Q_\cH(2\tau) _i\psi_1(\abs{h-h_v}(\tilde{X}_{(a+b)(i-1)+j}))}{}\\
    &~~~+2(\mu-1)\beta(a+b)+\frac{2b\mu}{N}+\frac{\rho}{ \tau r}.
\end{align*}
Since $\psi_1(\abs{\cdot})$ is a $1/( \tau  r)$-Lipschitz continuous mapping, using properties of Rademacher complexity we have
\begin{align*}
    \expec{\sup_{h\in\cH\cap rS(L_2)}\frac{1}{\mu}\sum_{i=1}^\mu Q_\cH(2\tau) _i\psi_1(\abs{h-h_v}(\tilde{X}_{(a+b)(i-1)+j}))}{}\\
    \leq \frac{1}{ \tau r}\expec{\sup_{h\in\cH\cap rS(L_2)}\frac{1}{\mu}\sum_{i=1}^\mu Q_\cH(2\tau) _i(h-h_v)(\tilde{X}_{(a+b)(i-1)+j}))}{}.
\end{align*}
Since we assumed $r>\omega_2(\zeta_2)$, and using \eqref{eq:rhobound} we have,
\begin{align*}
    \expec{\sup_{\delta_h\in \Delta_r}\frac{1}{N}\sum_{i=1}^N\delta_h(X_i)}{}\leq \frac{a\zeta_2\mu}{N \tau }+2(\mu-1)\beta(a+b)+\frac{2b\mu}{N}+\frac{c_5\zeta_1}{ \tau \sqrt{ Q_\cH(2\tau) }}.
\end{align*}
Choosing
\begin{align}
			\zeta_1\sim 2\tau  Q_\cH(2\tau) ^{\frac{3}{2}} \quad
			\zeta_2\sim 2\tau  Q_\cH(2\tau) &\quad
			a\sim\frac{(4- Q_\cH(2\tau) )N^{1/(1+\eta_1)}}{c^\frac{1}{\eta_1}} \\ b\sim\frac{ Q_\cH(2\tau)  N^{1/(1+\eta_1)}}{c^\frac{1}{\eta_1}} ~~~&\text{and}~~ \mu\sim\frac{N^{\eta_1/(1+\eta_1)}c^\frac{1}{\eta_1}}{4},
		\end{align}
		we have,
\begin{align*}
    \expec{\sup_{\delta_h\in \Delta_r}\frac{1}{N}\sum_{i=1}^N\delta_h(X_i)}{}\leq \frac{ Q_\cH(2\tau) }{32}.
\end{align*}
Now we use Lemma~\ref{lm:boundeddiff}, with the following choice 
\begin{align*}
			t=\frac{N Q_\cH(2\tau) }{32} \quad a=\frac{(4-\frac{ Q_\cH(2\tau) }{16})N^{1/(1+\eta_1)}}{c^\frac{1}{\eta_1} Q_\cH(2\tau) ^{1-\frac{1}{\eta_1}}} \quad b=\frac{ Q_\cH(2\tau) ^\frac{1}{\eta_1} N^{1/(1+\eta_1)}}{32c^\frac{1}{\eta_1}} \quad \mu=\frac{N^{\eta_1/(1+\eta_1)}c^\frac{1}{\eta_1} Q_\cH(2\tau) ^\frac{\eta_1-1}{\eta_1}}{4}.
\end{align*}
With probability at least $1-c_6 Q_\cH(2\tau) ^{1-\frac{1}{\eta_1}} N^{\eta_1/(1+\eta_1)}e^{-c_7 Q_\cH(2\tau)  N^{\eta_1/(1+\eta_1)}}$ we have,
\begin{align*}
    \frac{1}{N}\sum_{i=1}^N\sup_{\delta_h\in \Delta_r}\delta_h(X_i)\leq \expec{\frac{1}{N}\sum_{i=1}^N\sup_{\delta_h\in \Delta_r}\delta_h(X_i)}{}+\frac{t}{N}\leq \frac{ Q_\cH(2\tau) }{16}.
\end{align*}
Then $\forall h\in\cH\cap rS(L_2)$,
\begin{align}
    \abs{\{i:\abs{h-h_v}(X_i)\leq \tau r\}}\geq \left(1-\frac{ Q_\cH(2\tau) }{16}\right)N.
\end{align}
Recall that $h_v\in V_r$, and $|S_{h_v}|\geq N Q_\cH(2\tau) /2$. Let
\begin{align*}
    K_h=\{k:\abs{h-h_v}(X_k)\leq  \tau r\}\cap S_{h_v}. \numberthis\label{eq:Khdef}
\end{align*}
Then $|K_h|\geq 3N Q_\cH(2\tau) /8\geq N Q_\cH(2\tau) /4$. Also, $\forall k\in K_h$,
\begin{align*}
    |h(X_k)|\geq |h_v(X_k)|-|(h-h_v)(X_k)|\geq  2\tau  r- \tau r= \tau r. \numberthis\label{eq:hxklowbound}
\end{align*}
This also implies that $h(X_k)$ and $h_v(X_k)$ have same signs. Similarly, using \eqref{eq:vXiuplowbound} we get
\begin{align*}
    |h(X_k)|\leq |h_v(X_k)|+|(h-h_v)(X_k)|\leq c_{9}( 2\tau +\frac{1}{\sqrt{ Q_\cH(2\tau) }})\|h\|_{L_2}. \numberthis\label{eq:hxkupbound}
\end{align*}
Combining \eqref{eq:hxklowbound} and \eqref{eq:hxkupbound} we have \eqref{eq:hisoforderhL2}. This also implies that $h(X_k)$ and $v(X_k)$ have the same sign.
\end{proof}
\begin{lemma}[{\cite[ Lemma 4.8]{mendelson2018learning}}]\label{lm:lemma4.8equiv}
Let $1\leq k\leq m/40$ and set $\mathscr{D}\subset\{-1,0,1\}^m$ of cardinality at most $\exp(k)$. For every $d=(d(i))_{i=1}^m\in\mathscr{D}$ put $S_d=\{i:d(i)\neq 0\}$ and assume that $|S_d|\geq 40k$. If $\{ \epsilon_i\}_{i=1}^m$ are independent, symmetric $\{-1,1\}$-valued random variables, then with probability at least $1-2\exp(-k)$,
\begin{align*}
    \inf_{d\in\mathscr{D}}\abs{\{i\in S_d:sgn(d(i))= \epsilon_i\}}\geq k/3.
\end{align*}
\end{lemma}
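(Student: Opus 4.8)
The plan is to reduce to a single sign pattern, recognize the relevant count as a sum of independent fair coin flips, control its lower tail, and then absorb the cardinality of $\mathscr{D}$ by a union bound. Throughout, the role of the constant $40$ in the hypotheses will be exactly to make the per-pattern tail bound strong enough to overwhelm the $e^k$ union-bound factor.

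First, I would fix $d\in\mathscr{D}$ and write
\begin{align*}
N_d=\abs{\{i\in S_d:\mathrm{sgn}(d(i))=\epsilon_i\}}=\sum_{i\in S_d}\mathbbm{1}\big(\epsilon_i=\mathrm{sgn}(d(i))\big).
\end{align*}
For $i\in S_d$ we have $d(i)\in\{-1,1\}$, so $\mathrm{sgn}(d(i))\in\{-1,1\}$, and since $\epsilon_i$ is a symmetric $\pm1$ variable each indicator is $\mathrm{Bernoulli}(1/2)$; independence of the $\epsilon_i$ makes these $\abs{S_d}$ indicators mutually independent. Hence $N_d$ is a sum of $\abs{S_d}\ge 40k$ independent $\mathrm{Bernoulli}(1/2)$ variables, with $\E N_d=\abs{S_d}/2\ge 20k$.

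Second, because the threshold $k/3$ lies far below the mean $\E N_d\ge 20k$, I would invoke Hoeffding's inequality on the lower tail: with $s=\abs{S_d}/2-k/3$,
\begin{align*}
\pro\!\left(N_d\le \tfrac{k}{3}\right)\le \exp\!\left(-\frac{2s^2}{\abs{S_d}}\right).
\end{align*}
A short computation, minimizing $s^2/\abs{S_d}$ over $\abs{S_d}\ge 40k$ (the map $\abs{S_d}\mapsto s^2/\abs{S_d}$ is increasing on $\abs{S_d}\ge 40k$, so the minimum is attained at $\abs{S_d}=40k$, where $s=\tfrac{59}{3}k$), shows the exponent is at least a constant multiple of $k$ comfortably larger than $1$; thus $\pro(N_d<k/3)\le e^{-ck}$ with $c>1$ (in fact $c$ can be taken around $19$).

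Finally, since $\abs{\mathscr{D}}\le e^k$, a union bound gives
\begin{align*}
\pro\!\left(\inf_{d\in\mathscr{D}}N_d<\tfrac{k}{3}\right)\le \abs{\mathscr{D}}\,\max_{d\in\mathscr{D}}\pro\!\left(N_d<\tfrac{k}{3}\right)\le e^k\,e^{-ck}=e^{-(c-1)k}\le 2e^{-k},
\end{align*}
which is the assertion of the lemma. There is no genuine obstacle in this argument; the only point requiring care is the bookkeeping of numerical constants so that the per-pattern exponent $c$ exceeds $1$. This is precisely what the hypotheses $\abs{S_d}\ge 40k$ and $1\le k\le m/40$ guarantee (the latter ensuring $\abs{S_d}\ge 40k$ is even feasible within $m$ coordinates), and the slack they provide is generous.
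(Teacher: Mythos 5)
Your proof is correct. The paper itself gives no proof of this lemma --- it is imported verbatim from the cited source (Mendelson 2018, Lemma 4.8) --- and your argument (writing the agreement count for each fixed $d$ as a sum of $|S_d|\ge 40k$ independent Bernoulli$(1/2)$ indicators, applying Hoeffding's lower tail to get a per-pattern failure probability of roughly $e^{-19k}$, and union bounding over the at most $e^{k}$ patterns) is exactly the standard proof of this fact; your constant bookkeeping, including the monotonicity of $(n/2-k/3)^2/n$ in $n\ge 40k$, checks out, so the final estimate $e^{k}\cdot e^{-19k}\le 2e^{-k}$ for $k\ge 1$ is valid.
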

\begin{lemma}\label{lm:lemma4.9equiv}
Conditioned on the event $\mathcal{A}$ as mentioned in Lemma~\ref{lm:thm4.3equiv}, with probability at least $1-2\exp(-c_2 Q_\cH(2\tau)  N)$ we have: for every $h\in \cH_{f^*}\coloneqq \cF-f^*$ with $\|h\|_{L_2}\geq r$, there is a subset $\mathcal{S}_{1,h}\subset \{1,2,\cdots,N\}$ such that $\abs{\mathcal{S}_{1,h}}\geq  Q_\cH(2\tau)  N/24$. and for every $i\in \mathcal{S}_{1,h}$,
\begin{align*}
    \tau\|h\|_{L_2}\leq \abs{h(X_i)}\leq c_{9}\left( 2\tau +\frac{1}{\sqrt{ Q_\cH(2\tau) }}\right)\|h\|_{L_2}, \qquad sgn(h(X_i))= \epsilon_i, \numberthis\label{eq:lemma4.9equiv}
\end{align*}
where $\{\epsilon_i\}_{i=1}^N$ are independent, symmetric $\{-1,1\}$-valued random variables.
\end{lemma}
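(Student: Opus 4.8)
The plan is to work conditionally on the event $\mathcal{A}$ furnished by Lemma~\ref{lm:thm4.3equiv}, so that all the geometric content of that lemma becomes deterministic information about the sample, and then to exploit the independent Rademacher signs $\{\epsilon_i\}$ as the only remaining source of randomness. Recall that on $\mathcal{A}$ we are handed a net $V_r\subset\cH\cap rS(L_2)$ of cardinality at most $\exp(c_2' Q_\cH(2\tau) N^{\eta_1/(1+\eta_1)}/2)$, a coordinate block $S_v$ with $|S_v|\ge Q_\cH(2\tau)N/2$ for each $v\in V_r$ on which $|v(X_i)|$ is controlled from both sides, and, for each $h$ on the sphere, an associated representative $v=h_v$ together with a set $K_h\subset S_v$ on which $h$ obeys the two-sided isomorphic bound \eqref{eq:hisoforderhL2} and on which $\mathrm{sgn}(h(X_i))=\mathrm{sgn}(v(X_i))$. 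The point I would use repeatedly is that $K_h$ differs from $S_v$ only through the indices where $|h-h_v|(X_i)>\tau r$, whose number is at most $Q_\cH(2\tau)N/16$, so $|S_v\setminus K_h|\le Q_\cH(2\tau)N/16$.

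First I would reduce the sign condition on $h$ to a sign condition on the net. Since $h$ and its representative $v=h_v$ share signs on $K_h$, for any realization of $\{\epsilon_i\}$ and any $i\in K_h$ we have $\mathrm{sgn}(h(X_i))=\epsilon_i$ if and only if $\mathrm{sgn}(v(X_i))=\epsilon_i$. Hence it suffices to control, uniformly over the finitely many $v\in V_r$, the match set $A_v=\{i\in S_v:\mathrm{sgn}(v(X_i))=\epsilon_i\}$ and then to define $\mathcal{S}_{1,h}=K_h\cap A_{h_v}$. On this set the bound \eqref{eq:hisoforderhL2} is inherited directly from $K_h$, and the required sign agreement with $\epsilon_i$ holds by construction, so only the lower bound on $|\mathcal{S}_{1,h}|$ remains to be proved.

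Next I would invoke the sign-matching estimate of Lemma~\ref{lm:lemma4.8equiv}, applied to the family of patterns $\{(\mathrm{sgn}(v(X_i))\mathbbm{1}(i\in S_v))_{i=1}^N:v\in V_r\}$, whose supports all have size at least $Q_\cH(2\tau)N/2$ and whose cardinality is only exponential in a sub-linear power of $N$. Because the net is so small relative to the block sizes, the union bound is cheap: choosing the matching threshold to be a fixed proportion of $|S_v|$ still leaves a union-bound failure probability of order $\exp\big(c_2'Q_\cH(2\tau)N^{\eta_1/(1+\eta_1)}/2-\Theta(Q_\cH(2\tau)N)\big)$, i.e. at most $2\exp(-c_2 Q_\cH(2\tau)N)$ for large $N$. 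On the resulting event each $A_v$ contains a definite proportion, say at least a quarter, of $S_v$ simultaneously in $v$; intersecting with $K_h$ and discarding the at most $Q_\cH(2\tau)N/16$ indices of $S_v\setminus K_h$ then yields $|\mathcal{S}_{1,h}|\ge Q_\cH(2\tau)N/8-Q_\cH(2\tau)N/16=Q_\cH(2\tau)N/16\ge Q_\cH(2\tau)N/24$. The passage from $h$ on the sphere to an arbitrary $h$ with $\|h\|_{L_2}\ge r$ is then immediate from star-shapedness of $\cH$ around $0$: replacing $h$ by $(r/\|h\|_{L_2})h\in\cH\cap rS(L_2)$ preserves all signs and merely rescales \eqref{eq:hisoforderhL2}, so the set built for the normalized function serves for $h$ itself.

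The hard part will be precisely the bookkeeping in the third step: guaranteeing that the number of sign matches that survive the intersection with $K_h$ remains a constant multiple of $Q_\cH(2\tau)N$ rather than something merely logarithmic in the net size. This forces the per-pattern matching guarantee to be a genuine constant fraction of $|S_v|$, so that it dominates the deficit $|S_v\setminus K_h|\le Q_\cH(2\tau)N/16$, and such a fraction is affordable only because $|V_r|$ is sub-exponential in $N$. The delicate balance is to fix the matching proportion large enough that $|\mathcal{S}_{1,h}|$ clears the $Q_\cH(2\tau)N/24$ threshold while keeping the union-bound exponent at the advertised order $c_2 Q_\cH(2\tau)N$; once this is arranged, the remaining comparisons among $Q_\cH(2\tau)N/2$, $Q_\cH(2\tau)N/16$ and $Q_\cH(2\tau)N/24$, together with the verification of the large-$N$ regime in which the sub-linear net term is absorbed, are routine.
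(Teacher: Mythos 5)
Your proposal follows the paper's own proof essentially step for step: condition on the event $\mathcal{A}$ of Lemma~\ref{lm:thm4.3equiv}, transfer the sign condition from $h$ to its net representative $h_v$ using the shared signs on $K_h$, apply Lemma~\ref{lm:lemma4.8equiv} to the sign patterns $\{sgn(h_v(X_i))\mathbbm{1}_{S_{h_v}}(i)\}_{i=1}^N$ indexed by $V_r$, intersect the resulting match set with $K_h$ to clear the $Q_\cH(2\tau)N/24$ threshold, and pass from the sphere to $\|h\|_{L_2}\geq r$ by star-shapedness. The one point you flag as delicate — that the match guarantee must be a constant fraction of $|S_v|$ (affordable since $|V_r|$ is only $\exp(O(N^{\eta_1/(1+\eta_1)}))$ while per-pattern failure is $\exp(-\Theta(Q_\cH(2\tau)N))$) — is precisely the reading the paper also uses when it asserts matches on at least $1/3$ of the coordinates of $S_{h_v}$, so the two arguments coincide.
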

\begin{proof}
For a $h\in\cH$, let $\|h\|_{L_2}=r$ and let $h_v$ be as in Lemma~\ref{lm:thm4.3equiv}. Recall from \eqref{eq:hisoforderhL2}, that there is a subset $K_h\subset S_{h_v}$ containing at least $3/4$ of the coordinates of $S_{h_v}$ for which,
\begin{align*}
        \tau r\leq \abs{h(X_j)}\leq c_{9}\left( 2\tau +\frac{1}{\sqrt{ Q_\cH(2\tau) }}\right)r ,
\end{align*}
    and $h(X_j)$ and $h_v(X_j)$ have the same sign. Define
    \begin{align*}
        d_{h_v}=\{sgn(h_v(X_i))\mathbbm{1}_{S_{h_v}}(X_i)\}_{i=1}^N, \qquad \mathcal{D}=\{d_{h_v}:h_v\in V_r\}.
    \end{align*}
    Using Lemma~\ref{lm:lemma4.8equiv}, on the set $\mathcal{D}=\{d_{h_v}:d_{h_v}\in V_r\}$ for $k=N Q_\cH(2\tau) /1000$, and observing that every $d_{h_v}\in\mathcal{D}$, $\abs{\{i:d_{h_v}(i)\neq 0\}}\geq N Q_\cH(2\tau) /2\geq 40k$ (recall that $|S_{h_v}|\geq N Q_\cH(2\tau) /2$), we get with probability at least $1-2\exp(-c_2 Q_\cH(2\tau)  N)$, for every ${h_v}\in V_r$, $d_{h_v}(i)= \epsilon_i$ on at least $1/3$ of the coordinates of $S_{h_v}$.
    Then it follows that on at least $1/12$ of the coordinates of $S_{h_v}$, $h(X_j)= \epsilon_j$. Since $\cH_{f^*}$ is assumed to be star-shaped the same result holds when $\|h\|_{L_2}\geq r$.
\end{proof}
\begin{proposition}\label{prop:theorem4.7equiv}
With probability at least $1-c_{9} Q_\cH(2\tau) ^{1-\frac{1}{\eta_1}} N^{\eta_1/(1+\eta_1)}e^{-c_{10} Q_\cH(2\tau)  N^{\eta_1/(1+\eta_1)}}$, for every $f\in\cF$ which satisfies $\|f-f^*\|_{L_2}\geq 2\omega_Q$ we have
\begin{align}\label{eq:quadlower}
	\frac{1}{N}\sum_{i=1}^N \ell''(\widetilde{\xi}_i)(f-f^*)^2(X_i)\geq c_{16} Q_\cH(2\tau) \rho(0,t_0)\tau ^2\|f-f^*\|_{L_2}^2.
\end{align}
where $t_0=c_{11}( 2\tau +1/\sqrt{ Q_\cH(2\tau) })\left(\|\xi\|_{L_2}+\|f-f^*\|_{L_2}\right)$.
\end{proposition}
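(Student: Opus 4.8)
The plan is to follow the localization strategy underlying Lemma~\ref{lm:lemma4.9equiv} and reduce the weighted quadratic sum to a count of ``good'' coordinates on which both $|(f-f^*)(X_i)|$ is comparable to $\|f-f^*\|_{L_2}$ and the multiplier $\ell''(\widetilde{\xi}_i)$ is bounded below. Recall from the decomposition~\eqref{eq:decomp} that $\widetilde{\xi}_i$ is a midpoint between $\xi_i=f^*(X_i)-Y_i$ and $\xi_i+(f-f^*)(X_i)=f(X_i)-Y_i$, so that pointwise $|\widetilde{\xi}_i|\le |\xi_i|+|(f-f^*)(X_i)|$. Since $\ell$ is convex and locally strongly convex (Assumption~\ref{as:lscgc}) and $\rho(0,t_0)=\inf\{\ell''(x):x\in[0,t_0]\}$, on any index $i$ with $|\widetilde{\xi}_i|\le t_0$ we have $\ell''(\widetilde{\xi}_i)\ge\rho(0,t_0)$. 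Thus it suffices to exhibit, for each fixed $f$ with $\|f-f^*\|_{L_2}\ge 2\omega_Q$, a set $\mathcal S\subset\{1,\dots,N\}$ with $|\mathcal S|\gtrsim Q_\cH(2\tau)N$ on which simultaneously $|(f-f^*)(X_i)|\ge \tau\|f-f^*\|_{L_2}$ and $|\widetilde{\xi}_i|\le t_0$; discarding the remaining nonnegative terms then gives $\frac1N\sum_i\ell''(\widetilde{\xi}_i)(f-f^*)^2(X_i)\ge (|\mathcal S|/N)\,\rho(0,t_0)\,\tau^2\|f-f^*\|_{L_2}^2$.

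First I would invoke Lemma~\ref{lm:lemma4.9equiv} with $h=f-f^*$: conditioned on the event $\mathcal A$ of Lemma~\ref{lm:thm4.3equiv}, and with the probability asserted there, for \emph{every} such $h$ there is a set $\mathcal S_{1,h}$ with $|\mathcal S_{1,h}|\ge Q_\cH(2\tau)N/24$ on which $\tau\|h\|_{L_2}\le |h(X_i)|\le c_9\big(2\tau+1/\sqrt{Q_\cH(2\tau)}\big)\|h\|_{L_2}$, cf.~\eqref{eq:lemma4.9equiv}. The lower bound here supplies the $\tau^2\|f-f^*\|_{L_2}^2$ factor, while the upper bound controls the error's contribution to $|\widetilde{\xi}_i|$ uniformly over $f$; the requirement $\|f-f^*\|_{L_2}\ge 2\omega_Q>r$ guarantees that $h$ lies above the radius at which Lemmas~\ref{lm:thm4.3equiv}--\ref{lm:lemma4.9equiv} apply.

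Next I would handle the noise through a separate, $f$-independent event. Fixing a threshold $\theta\asymp\|\xi\|_{L_2}/\sqrt{Q_\cH(2\tau)}$, Markov's inequality gives $\expec{\mathbbm 1(|\xi_i|>\theta)}{}\le \|\xi\|_{L_2}^2/\theta^2\le Q_\cH(2\tau)/96$. Because these indicators are bounded by $1$ and the sequence is strictly stationary exponentially $\beta$-mixing, Lemma~\ref{lm:boundeddiff} (with $M=1$ and the block lengths of~\eqref{eq:tabmucommonchoice}) shows that, off an event of probability at most $c_9\,Q_\cH(2\tau)^{1-1/\eta_1}N^{\eta_1/(1+\eta_1)}e^{-c_{10}\,Q_\cH(2\tau)N^{\eta_1/(1+\eta_1)}}$, at most $Q_\cH(2\tau)N/48$ indices satisfy $|\xi_i|>\theta$. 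Setting $\mathcal S:=\{i\in\mathcal S_{1,h}:|\xi_i|\le\theta\}$ we obtain $|\mathcal S|\ge Q_\cH(2\tau)N/24-Q_\cH(2\tau)N/48=Q_\cH(2\tau)N/48$, and on $\mathcal S$ we have $|\widetilde{\xi}_i|\le \theta+c_9\big(2\tau+1/\sqrt{Q_\cH(2\tau)}\big)\|f-f^*\|_{L_2}\le c_{11}\big(2\tau+1/\sqrt{Q_\cH(2\tau)}\big)\big(\|\xi\|_{L_2}+\|f-f^*\|_{L_2}\big)=t_0$, so that $\ell''(\widetilde{\xi}_i)\ge\rho(0,t_0)$ there. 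A union bound over the two events, together with the reduction of the first paragraph, then yields~\eqref{eq:quadlower} with, e.g., $c_{16}=1/48$.

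The main obstacle is the uniformity over the infinite class $\cF$: the two-sided sandwich on $|(f-f^*)(X_i)|$, the accompanying sign information, and the cardinality guarantee must hold simultaneously for all $f$, which is precisely what the covering-plus-sign-matching construction in Lemmas~\ref{lm:thm4.3equiv}--\ref{lm:lemma4.9equiv} delivers. The genuinely new point relative to the $\iid$ analysis of~\cite{mendelson2018learning} is twofold: the noise truncation level $\theta$ must be coupled to $Q_\cH(2\tau)$ so that heavy-noise coordinates form a strictly smaller fraction than $|\mathcal S_{1,h}|/N$, and the count of such coordinates must be shown to concentrate \emph{without} a symmetrization argument — this is exactly the role played by the $\beta$-mixing bounded-difference inequality, Lemma~\ref{lm:boundeddiff}.
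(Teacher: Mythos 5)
Your proposal is correct and follows essentially the same route as the paper's own proof: it uses the midpoint bound $|\widetilde{\xi}_i|\le|\xi_i|+|(f-f^*)(X_i)|$, invokes Lemma~\ref{lm:lemma4.9equiv} for the uniform two-sided sandwich on $|(f-f^*)(X_i)|$ over a set of cardinality at least $Q_\cH(2\tau)N/24$, and then controls the fraction of large-noise coordinates via Markov's inequality combined with the $\beta$-mixing bounded-difference inequality (Lemma~\ref{lm:boundeddiff} with the block choices of~\eqref{eq:tabmucommonchoice}), before intersecting the two sets and discarding the remaining nonnegative terms. The only differences from the paper are immaterial constant choices (noise threshold giving $Q_\cH(2\tau)/96$ versus $Q_\cH(2\tau)/100$ in Markov, and $c_{16}=1/48$ versus the paper's intersection count $Q_\cH(2\tau)N/24-Q_\cH(2\tau)N/50$).
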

\begin{proof}[Proof of Proposition~\ref{prop:theorem4.7equiv}]
Recall the decomposition of $P_NL_f$ \eqref{eq:decomp}. For every $(X,Y)$ the midpoint $\tilde{\xi}$ belongs to the interval with end points $-\xi$ and $(f-f^*)(X)-\xi$ where $f\in\cF$. So,
\begin{align*}
    |\tilde{\xi}_i|\leq |\xi_i|+\abs{(f-f^*)(X_i)}. 
\end{align*}
Let $\|f-f^*\|_{L_2}>2\omega_Q$. Now from Lemma~\ref{lm:lemma4.9equiv}, with probability at least $$1-c_{9} Q_\cH(2\tau) ^{1-\frac{1}{\eta_1}} N^{\eta_1/(1+\eta_1)}e^{-c_{10} Q_\cH(2\tau)  N^{\eta_1/(1+\eta_1)}},$$ we have a subset $\mathcal{S}_{1,h}\subset \{1,2,\cdots,N\}$ such that $\abs{\mathcal{S}_{1,h}}\geq  Q_\cH(2\tau)  N/24$, and for every $i\in \mathcal{S}_{1,h}$,
\begin{align*}
    \abs{(f-f^*)(X_i)}\leq c_{9}( 2\tau +1/\sqrt{ Q_\cH(2\tau) })\|f-f^*\|_{L_2}.
\end{align*}
Using Markov's inequality,
\begin{align*}
    \mathbb{P}(|\xi_i|>10\|\xi\|_{L_2}/\sqrt{ Q_\cH(2\tau) })\leq \frac{ Q_\cH(2\tau). }{100}
\end{align*}
Now taking $U_i=\mathbbm{1}\left(|\xi_i|\leq \frac{c_{9}\|\xi\|_{L_2}}{\sqrt{ Q_\cH(2\tau) }}\right)$, and using Lemma~\ref{lm:boundeddiff}, and choosing parameters as in \eqref{eq:tabmucommonchoice} we get, with probability at least  $1-c_1 Q_\cH(2\tau) ^{1-\frac{1}{\eta_1}} N^{\eta_1/(1+\eta_1)}e^{-c_2 Q_\cH(2\tau)  N^{\eta_1/(1+\eta_1)}}$,
\begin{align*}
    \abs{\{i:|\xi_i|\leq \frac{c_{9}\|\xi\|_{L_2}}{\sqrt{ Q_\cH(2\tau) }}\}}\geq N(1- Q_\cH(2\tau) /50).
\end{align*}
This implies that with probability at least $1-c_{16} Q_\cH(2\tau) ^{1-\frac{1}{\eta_1}} N^{\eta_1/(1+\eta_1)}e^{-c_{17} Q_\cH(2\tau)  N^{\eta_1/(1+\eta_1)}}$ we have,
\begin{align*}
    |\tilde{\xi}_i|\leq c_{11}( 2\tau +1/\sqrt{ Q_\cH(2\tau) })\left(\|\xi\|_{L_2}+\|f-f^*\|_{L_2}\right).
\end{align*}
Set $t_0=c_{11}( 2\tau +1/\sqrt{ Q_\cH(2\tau) })\left(\|\xi\|_{L_2}+\|f-f^*\|_{L_2}\right)$. Using Lemma~\ref{lm:lemma4.9equiv}, with probability at least $1-c_{9} Q_\cH(2\tau) ^{1-\frac{1}{\eta_1}} N^{\eta_1/(1+\eta_1)}e^{-c_{10} Q_\cH(2\tau)  N^{\eta_1/(1+\eta_1)}}$,
\begin{align}
	\frac{1}{N}\sum_{i=1}^N \ell''(\widetilde{\xi}_i)(f-f^*)^2(X_i)\geq c_{16} Q_\cH(2\tau) \rho(0,t_0) \tau ^2\|f-f^*\|_{L_2}^2.
\end{align}
\end{proof}
Using Proposition~\ref{prop:theorem4.7equiv}, and proving the two-sided bounds for the second term on the RHS of \eqref{eq:decompwithexpec} in \eqref{eq:noiseconcalphacon} and \eqref{eq:noiseconcalphapolycon}, we have Proposition~\ref{pro:mainthm2appcon}.
\begin{proposition}\label{pro:mainthm2appcon}
	Consider ERM with loss functions that satisfy Assumption~\ref{as:lscgc}. For  $\tau_0<c_2Q_{\cF-\cF}(2\tau)\rho(0,t_0)\tau^2$, $t_0=\mathcal{O}(( 2\tau +1/\sqrt{Q_{\cH}(2\tau)})(\|\xi\|_{L_2}+\|f-f^*\|_{L_2}))$, setting $\mu = N^{\eta_1/(1+\eta_1)}$, for some constants $c,c' >0$, we have, for any $N \geq 4$, the following:
	\begin{enumerate}[leftmargin=0.2in]
	    \item Under conditions (a), (b), (c)-(i), and (d) of Assumption~\ref{as:datagenprocess}, for $\ 0<\iota<\frac{1}{4}$, 
	    \begin{align*}
\|\hat{f}-f^*\|_{L_2} \leq \max\left\{N^{-\frac{1}{4}+\iota}, 2\omega_Q(\cF-\cF,N, Q_\cH(2\tau) ^{3/2}, Q_\cH(2\tau) )\right\}, \numberthis\label{eq:errorboundalphaconapp}
	\end{align*}
	with probability at least (for $V$ is defined in~\eqref{eq:parameterv} and some positive $c_9, c_{10}, \widetilde{C}_3$)
	\begin{align*}
	&1-c_9Q_{\cH}(2\tau)^{1-\frac{1}{\eta_1}} N^{\eta_1/(1+\eta_1)}e^{-c_{10}Q_{\cH}(2\tau)^{1+\frac{1}{\eta_1}}N^{\frac{\eta_1}{1+\eta_1}}}
	-\widetilde{C}_3N\exp\left(-(N^{\frac{1}{2}+2\iota}\tau_0)^\eta/C_1\right).
\end{align*}
	    \item Under conditions (a), (b), and (c)-(ii) of Assumption~\ref{as:datagenprocess}, for $\ 0<\iota<(1-1/\eta_2)/4$,
	    \begin{align*}
		\|\hat{f}-f^*\|_{L_2} \leq \max\left\{N^{-\frac{(1-1/\eta_2)}{4}+\iota}, 2\omega_Q(\cF-\cF,N, Q_\cH(2\tau) ^{3/2}, Q_\cH(2\tau) )\right\}, \numberthis\label{eq:errorboundalphapolyconapp}
	\end{align*}
	with probability at least (for constants $c_9, c_{10}, \widetilde{C}_4>0$)
	\begin{align*}
		&1-c_9Q_{\cH}(2\tau)^{1-\frac{1}{\eta_1}} N^{\eta_1/(1+\eta_1)}e^{-c_{10}Q_{\cH}(2\tau)^{1+\frac{1}{\eta_1}}N^{\eta_1/(1+\eta_1)}}-\widetilde{C}_4\tau_0^{-\frac{2\eta_2}{1+\eta_2}}N^{-\frac{4\iota \eta_2}{1+\eta_2}}. \numberthis\label{eq:thm10probpolyconapp}
	\end{align*}
	\end{enumerate}
\end{proposition}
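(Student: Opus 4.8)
The plan is to run the same contradiction scheme as in the proof of Theorem~\ref{th:mainthm2app}, but with the quadratic lower bound of Lemma~\ref{lm:thilowbound} replaced by its locally strongly-convex counterpart, Proposition~\ref{prop:theorem4.7equiv}. Starting from the decomposition \eqref{eq:decompwithexpec}, I would produce a positive lower bound on the first term and a matching upper bound on the magnitude of the second (interaction) term, valid simultaneously over all $f\in\cF$ with $\|f-f^*\|_{L_2}$ large. Since $f^*\in\cF$ and $\cF$ is convex, we have $\E[\ell'(\xi)(f-f^*)(X)]\geq 0$ and hence $P_NL_{\hat f}\leq P_NL_{f^*}=0$; so showing $P_NL_f>0$ on the large-norm regime forces $\|\hat f-f^*\|_{L_2}$ to lie below the threshold, which is exactly the asserted bound.

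For the first term, I would invoke Proposition~\ref{prop:theorem4.7equiv} verbatim: on an event of probability at least $1-c_9Q_\cH(2\tau)^{1-1/\eta_1}N^{\eta_1/(1+\eta_1)}e^{-c_{10}Q_\cH(2\tau)^{1+1/\eta_1}N^{\eta_1/(1+\eta_1)}}$, every $f$ with $\|f-f^*\|_{L_2}\geq 2\omega_Q$ satisfies $\tfrac1N\sum_i\ell''(\widetilde\xi_i)(f-f^*)^2(X_i)\geq c_{16}Q_\cH(2\tau)\rho(0,t_0)\tau^2\|f-f^*\|_{L_2}^2$, with $t_0=\order((2\tau+1/\sqrt{Q_\cH(2\tau)})(\|\xi\|_{L_2}+\|f-f^*\|_{L_2}))$; this is the first probability term in both parts. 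For the interaction term I would set $W_i=\ell'(\xi_i)(f-f^*)(X_i)-\E[\ell'(\xi_i)(f-f^*)(X_i)]$, a zero-mean $\beta$-mixing sequence whose tails are governed by condition (c) of Assumption~\ref{as:datagenprocesscon}. In part~1, condition (c)-(i) puts $W_i$ in the scope of Lemma~\ref{lm:rio}; applying it with $t=N\tau_0\|f-f^*\|_{L_2}^2$, bounding $V$ exactly as in \eqref{eq:noiseconcalpha}, and restricting to $\|f-f^*\|_{L_2}\geq\mathscr{A}(N)=N^{-1/4+\iota}$ yields a tail of order $\widetilde C_3N\exp(-(N^{1/2+2\iota}\tau_0)^\eta/C_1)$. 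In part~2, condition (c)-(ii) puts $W_i$ in the scope of Lemma~\ref{lm:heavytailconc}; applying it with $t=N\tau_0\|f-f^*\|_{L_2}^2$, $\mathscr{A}(N)=N^{-(1-1/\eta_2)/4+\iota}$, and the same choices $d_1=1/(1+\eta_2)$, $d_2=(\eta_2-1)/(\eta_2+1)$ as in \eqref{eq:noiseconcalphapoly} produces the polynomial tail $\widetilde C_4\tau_0^{-2\eta_2/(1+\eta_2)}N^{-4\iota\eta_2/(1+\eta_2)}$.

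On the intersection of these events, for every $f$ with $\|f-f^*\|_{L_2}\geq\max\{2\omega_Q,\mathscr{A}(N)\}$ the decomposition \eqref{eq:decompwithexpec} gives
\[
P_NL_f\geq\Bigl(\tfrac{c_{16}}{16}Q_\cH(2\tau)\rho(0,t_0)\tau^2-\tau_0\Bigr)\|f-f^*\|_{L_2}^2.
\]
Choosing $\tau_0<c_2Q_{\cF-\cF}(2\tau)\rho(0,t_0)\tau^2$ with $c_2=c_{16}/16$ makes the bracket strictly positive, so $P_NL_f>0$, contradicting $P_NL_{\hat f}\leq 0$. Therefore $\|\hat f-f^*\|_{L_2}\leq\max\{2\omega_Q,\mathscr{A}(N)\}$; substituting the two choices of $\mathscr{A}(N)$ gives \eqref{eq:errorboundalphaconapp} and \eqref{eq:errorboundalphapolyconapp}, while the failure probabilities add up to \eqref{eq:thm10probpolyconapp} and its part-1 analogue. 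The prescribed $\mu=N^{\eta_1/(1+\eta_1)}$ is exactly the number of blocks used inside Proposition~\ref{prop:theorem4.7equiv}, so the $\omega_Q$ appearing there matches the one in the statement.

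The main obstacle is the interplay between the threshold $t_0$ and the local strong-convexity constant $\rho(0,t_0)$: since $t_0$ itself grows with $\|f-f^*\|_{L_2}$, one must ensure that $\rho(0,t_0)$ stays bounded away from zero over the whole range of $f$ under consideration, and that a single $\tau_0$ works uniformly. For robust losses such as the Huber loss this is exactly where the boundedness of $\ell'$ enters, keeping the truncated-midpoint control inside Proposition~\ref{prop:theorem4.7equiv} within the strongly-convex region $[-t_0,t_0]$. A secondary technical point is checking that condition~(d) of Assumption~\ref{as:datagenprocess} guarantees $\eta\in(0,1)$, which is what legitimizes the use of Lemma~\ref{lm:rio} in part~1; in part~2 no such trade-off is needed, since Lemma~\ref{lm:heavytailconc} does not require condition~(d).
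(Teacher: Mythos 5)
Your proposal is correct and follows essentially the same route as the paper's own proof: the decomposition \eqref{eq:decompwithexpec}, the lower bound from Proposition~\ref{prop:theorem4.7equiv}, the interaction-term tails via Lemma~\ref{lm:rio} (part 1) and Lemma~\ref{lm:heavytailconc} with $d_1=1/(1+\eta_2)$, $d_2=(\eta_2-1)/(\eta_2+1)$ (part 2), and the contradiction with $P_NL_{\hat f}\leq 0$ under the same choices of $\mathscr{A}(N)$. The only differences are immaterial constant bookkeeping (your $c_2=c_{16}/16$ versus the paper's $\tau_0<c_{16}Q_\cH(2\tau)\rho(0,t_2)\tau^2/4$), and the uniformity issue you flag about $\rho(0,t_0)$ is deferred in the paper exactly as you suggest—it is resolved only later, via Lemma~\ref{lm:fhatfstarclose}, when passing from this proposition to Theorem~\ref{th:mainthm2con}.
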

\begin{proof}[Proof of Proposition~\ref{pro:mainthm2appcon}]
We first prove \textbf{part 1}. We will denote the class $\cF-f^*$ by $\cH$. From Proposition~\ref{prop:theorem4.7equiv} it follows that for every $f\in\cF$ which satisfies $\|f-f^*\|_{L_2}\geq 2\omega_Q$ with probability at least $$\mathscr{P}_{1,c}=1-c_{9} Q_\cH(2\tau) ^{1-\frac{1}{\eta_1}} N^{\eta_1/(1+\eta_1)}e^{-c_{10} Q_\cH(2\tau) ^{1+\frac{1}{\eta_1}}N^{\eta_1/(1+\eta_1)}},$$ we have
\begin{align*}
    \frac{1}{N}\sum_{i=1}^N \ell''(\widetilde{\xi}_i)(f-f^*)^2(X_i)\geq c_{16} Q_\cH(2\tau) \rho(0,t_2) \tau ^2\|f-f^*\|_{L_2}^2.
\end{align*}
So, with probability at least $\mathscr{P}_{1,c}$, for every $f\in \cF$ that satisfies $\|f-f^*\|_{L_2}\geq 2\omega_Q$,
\begin{align*}
	P_N\mathcal{L}_f\geq\left( \frac{1}{16N}\sum_{i=1}^Nl'(\xi_i)(f-f^*)(X_i)-\expec{l'(\xi)(f-f^*)}{}\right)+c_{16} Q_\cH(2\tau) \rho(0,t_2) \tau ^2\|f-f^*\|_{L_2}^2. \numberthis\label{eq:PnLfintermedboundalphacon}
\end{align*}
	When $\|f-f^*\|_{L_2}\geq \mathscr{A}(N)> 2(N\tau_0)^{-1/2}$, we have $\log (N\tau_0\|f-f^*\|_{L_2}^2)\leq 2(N\tau_0\|f-f^*\|_{L_2}^2)^{(1-\eta)/2}/(1-\eta)$. Under Conditions~(1), (3), and (4) of Assumption~\ref{as:datagenprocess}, using Lemma~\ref{lm:rio}, we get
	\begin{align*}
		&\pro\left(\abs{\frac{1}{N}\sum_{i=1}^Nl'(\xi_i)(f-f^*)(X_i)-\expec{l'(\xi)(f-f^*)}{}}\geq \tau_0\|f-f^*\|_{L_2}^2\right)\\
		\leq& N\exp\left(-\frac{(N\tau_0\|f-f^*\|_{L_2}^2)^\eta}{C_1}\right)+\exp\left(-\frac{N^2\tau_0^2\|f-f^*\|_{L_2}^4}{C_2(1+NV)}\right)\\
		+&\exp\left(-\frac{N\tau_0^2\|f-f^*\|_{L_2}^4}{C_3}\exp\left(\frac{(N\tau_0\|f-f^*\|_{L_2}^2)^{\eta(1-\eta)}}{C_4(\log (N\tau_0\|f-f^*\|_{L_2}^2))^\eta}\right)\right)\\
		\leq& N\exp\left(-\frac{(N\tau_0\|f-f^*\|_{L_2}^2)^\eta}{C_1}\right)+\exp\left(-\frac{N^2\tau_0^2\|f-f^*\|_{L_2}^4}{C_2(1+NV)}\right)\\
		+&\exp\left(-\frac{N\tau_0^2\|f-f^*\|_{L_2}^4}{C_3}\exp\left(\frac{(1-\eta)^\eta(N\tau_0\|f-f^*\|_{L_2}^2)^\frac{{\eta(1-\eta)}}{2}}{C_42^\eta}\right)\right)\\
		\leq& N\exp\left(-\frac{(N\tau_0\mathscr{A}(N)^2)^\eta}{C_1}\right)+\exp\left(-\frac{N^2\tau_0^2\mathscr{A}(N)^4}{C_2(1+NV)}\right)\\
		+&\exp\left(-\frac{N\tau_0^2\mathscr{A}(N)^4}{C_3}\exp\left(\frac{(1-\eta)^\eta(N\tau_0\mathscr{A}(N)^2)^\frac{{\eta(1-\eta)}}{2}}{C_42^\eta}\right)\right)\equiv \mathscr{P}_{2,c}, \numberthis\label{eq:noiseconcalphacon}
	\end{align*}
	where $$V\leq \expec{\left(\ell'(\xi_1)(f-f^*)(X_1)\right)^2}{}+4\sum_{i\geq 0}\expec{B_i\left(\ell'(\xi_1)(f-f^*)(X_1)\right)^2}{},$$
	$\{B_i\}$ is some sequence such that $B_i\in[0,1]$ and  $\expec{B_i}{}\leq \beta(i)$, and $C_1,C_2,C_3$ are constants which depend on $c,\eta,\eta_1,\eta_2$. Observe that,
	\begin{align*}
		V\leq& \expec{\left( \ell'(\xi_1) (f-f^*)(X_1)\right)^2}{}+4\sum_{i\geq 0}\expec{B_i\left( \ell'(\xi_1) (f-f^*)(X_1)\right)^2}{}\\
		\leq & \expec{\left( \ell'(\xi_1) (f-f^*)(X_1)\right)^2}{}+4\sum_{i\geq 0}\sqrt{\expec{B_i^2}{}\expec{\left( \ell'(\xi_1) (f-f^*)(X_1)\right)^4}{}}\\
		\leq & \expec{\left( \ell'(\xi_1) (f-f^*)(X_1)\right)^2}{}+4\sqrt{\expec{\left( \ell'(\xi_1) (f-f^*)(X_1)\right)^4}{}}\sum_{i\geq 0}\sqrt{\expec{B_i}{}}\\
		\leq & \expec{\left( \ell'(\xi_1) (f-f^*)(X_1)\right)^2}{}+4\sqrt{\expec{\left( \ell'(\xi_1) (f-f^*)(X_1)\right)^4}{}}\sum_{i\geq 0}\sqrt{\beta(i)}\\
		\leq & \expec{\left( \ell'(\xi_1) (f-f^*)(X_1)\right)^2}{}+4\sqrt{\expec{\left( \ell'(\xi_1) (f-f^*)(X_1)\right)^4}{}}\sum_{i\geq 0}\exp(-ci^{\eta_1}/2)\\
		\leq & 2^\frac{2}{\eta_2}+C4^{1+\frac{2}{\eta_2}}.
\end{align*}
	Combining \eqref{eq:PnLfintermedboundalphacon}, and \eqref{eq:noiseconcalphacon}, with probability at least $\mathscr{P}_{1,c}-\mathscr{P}_{2,c}$, for every $f\in \cF$ that satisfies $\|f-f^*\|_{L_2}\geq \max(2\omega_Q,\mathscr{A}(N))$, we get 
	\begin{align*}
		P_NL_f\geq -2\tau_0\|f-f^*\|^2_{L_2}+c_{16} Q_\cH(2\tau) \rho(0,t_2) \tau ^2\|f-f^*\|_{L_2}^2.
	\end{align*}
	Choosing $\tau_0<c_{16} Q_\cH(2\tau) \rho(0,t_2) \tau ^2/4$, we have, $P_NL_f>0.$ But the empirical minimizer $\hat{f}$ satisfies $P_NL_{\hat{f}}\leq 0$. This implies, together with choosing $\mathscr{A}(N)=N^{-1/4+\iota}$, that with probability at least $\mathscr{P_c}=\mathscr{P}_{1,c}-\mathscr{P}_{2,c}$,
	\begin{align*}
		\|\hat{f}-f^*\|_{L_2}\leq \max(2\omega_Q(\cF-\cF,N,Q_\cH(2\tau)^\frac{3}{2},Q_\cH(2\tau)),\mathscr{A}(N)),
	\end{align*}
where 
\begin{align*}
	&\mathscr{P_c}=1-c_{9} Q_\cH(2\tau) ^{1-\frac{1}{\eta_1}} N^{\eta_1/(1+\eta_1)}e^{-c_{10} Q_\cH(2\tau) ^{1+\frac{1}{\eta_1}}N^{\eta_1/(1+\eta_1)}}\\
	-&N\exp\left(-\frac{(N^{\frac{1}{2}+2\iota}\tau_0)^\eta}{C_1}\right)-\exp\left(-\frac{N^{1+4\iota}\tau_0^2}{C_2(1+NV)}\right)
	-\exp\left(-\frac{N^{4\iota}\tau_0^2}{C_3}\exp\left((1-\eta)^\eta\frac{(N^{\frac{1}{2}+2\iota}\tau_0)^{\frac{\eta(1-\eta)}{2}}}{C_42^\eta}\right)\right).
\end{align*}

We now prove \textbf{part 2}. When $\|f-f^*\|_{L_2}\geq \mathscr{A}(N)$, using Lemma~\ref{lm:heavytailconc} we have
	\begin{align*}
		&\pro\left(\abs{\frac{1}{N}\sum_{i=1}^Nl'(\xi_i)(f-f^*)(X_i)-\expec{l'(\xi)(f-f^*)}{}}\geq \tau_0\|f-f^*\|_{L_2}^2\right)\\
		\leq& \frac{2^{\eta_2+3}}{(d_2\log N\tau_0\|f-f^*\|_{L_2}^2)^\frac{1-\eta_2}{\eta_1}}N(N\tau_0\|f-f^*\|_{L_2}^2)^{-(1+d_1(\eta_2-1))}+8N(N\tau_0\|f-f^*\|_{L_2}^2)^{-(1+d_2c')}\\
		&+2e^{-\frac{(N\tau_0\|f-f^*\|_{L_2}^2)^{2-2d_1}(d_2\log (N\tau_0\|f-f^*\|_{L_2}^2))^{1/\eta_1}}{9N}}\\
		\leq& \frac{2^{\eta_2+3}}{(d_2\log (N\tau_0\mathscr{A}(N)^2))^\frac{1-\eta_2}{\eta_1}}N(N\tau_0\mathscr{A}(N)^2)^{-(1+d_1(\eta_2-1))}+8N(N\tau_0\mathscr{A}(N)^2)^{-(1+d_2c')}\\
		&+2e^{-\frac{(N\tau_0\mathscr{A}(N)^2)^{2-2d_1}(d_2\log (N\tau_0\mathscr{A}(N)^2))^{1/\eta_1}}{9N}}\equiv \mathscr{P}_{2,c}. \numberthis\label{eq:noiseconcalphapolycon}
	\end{align*}
	We will choose $d_1$ suitably to allow $\mathscr{A}(N)$ to decrease with $N$ as fast as possible while ensuring $\lim_{N\to \infty}\mathscr{P}_{2,c}\to 0$. Combining \eqref{eq:PnLfintermedboundalpha}, and \eqref{eq:noiseconcalphapoly}, with probability at least $\mathscr{P}_{1,c}-\mathscr{P}_{2,c}$, for every $f\in \cF$ that satisfies $\|f-f^*\|_{L_2}\geq \max(2\omega_Q,\mathscr{A}(N))$, we get 
	\begin{align*}
		P_NL_f\geq -2\tau_0\|f-f^*\|^2_{L_2}+c_{16} Q_\cH(2\tau) \rho(0,t_2) \tau ^2\|f-f^*\|_{L_2}^2.
	\end{align*}
	Choosing $\tau_0<c_{16} Q_\cH(2\tau) \rho(0,t_2) \tau ^2/4$, we have, $P_NL_f>0.$ But the empirical minimizer $\hat{f}$ satisfies $P_NL_{\hat{f}}\leq 0$. This implies, together with choosing $\mathscr{A}(N)=N^{-(1-1/\eta_2)/4+\iota}$, $d_1=1/(1+\eta_2)$, $d_2=(\eta_2-1)/(\eta_2+1)$, and $\iota<(1-1/\eta_2)/4$, that with probability at least $\mathscr{P}_c=\mathscr{P}_{1,c}-\mathscr{P}_{2,c}$,
	\begin{align*}
		\|\hat{f}-f^*\|_{L_2}\leq \max(2\omega_Q(\cF-\cF,N,\zeta_1,\zeta_2),N^{-(1-1/\eta_2)/4+\iota}),
	\end{align*}
	where 
	\begin{align*}
		\mathscr{P}_c &=1-c_{9} Q_\cH(2\tau) ^{1-\frac{1}{\eta_1}} N^{\eta_1/(1+\eta_1)}e^{-c_{10} Q_\cH(2\tau) ^{1+\frac{1}{\eta_1}}N^{\eta_1/(1+\eta_1)}}-\frac{2^{\eta_2+3}\tau_0^{-\frac{2\eta_2}{1+\eta_2}}}{\left(\log \left(\tau_0N^{\frac{1}{2}+\frac{1}{2\eta_2}+2\iota}\right)/2\right)^\frac{1-\eta_2}{\eta_1}}N^{-\frac{4\iota \eta_2}{1+\eta_2}}\\&-8\tau_0^{-\frac{2\eta_2}{1+\eta_2}}N^{-\frac{4\iota \eta_2}{1+\eta_2}}
		-2e^{-\frac{\tau_0^{\frac{2\eta_2}{1+\eta_2}}\left(\log \left(\tau_0N^{\frac{1}{2}+\frac{1}{2\eta_2}+2\iota}\right)/2\right)^{1/\eta_1}}{9}N^{\frac{4\iota \eta_2}{1+\eta_2}}}.
	\end{align*}
\end{proof}
Note that Proposition~\ref{pro:mainthm2appcon} is exactly same as Theorem~\ref{th:mainthm2con} except for the fact one needs $\ell$ to be strongly convex in $[-t_0,t_0]$ instead of $[-t_2,t_2]$ where $t_2$ is of the order $\mathcal{O}(( 2\tau +1/\sqrt{Q_{\cH}(2\tau)})\|\xi\|_{L_2})$. So now we will show that empirical minimizer $\hat{f}\in\cF$ satisfies $\|\hat{f}-f^*\|_{L_2}\leq \max(\|\xi\|_{L_2},2\omega_Q)$ with high probability. 
One has the following result from \cite{mendelson2018learning}:
\begin{align}
    \{h-f^*:h\in\cF,\|h-f^*\|_{L_2}\geq R\}\subset\{\lambda(f-f^*):\lambda\geq 1, f\in\cF, \|f-f^*\|_{L_2}=R\}. \label{eq:lambdastar}
\end{align}
\begin{lemma}[{\cite[Lemma 5.6]{mendelson2018learning}}]\label{lm:5.6equiv}
When \eqref{eq:quadlower} is true, if $\|f-f^*\|_{L_2}\geq \max(\|\xi\|_{L_2},2\omega_Q)$, and $\lambda\geq 1$, then
    \begin{align}
        \frac{1}{N}\sum_{i=1}^N \ell''(\widetilde{\xi}_i)(\lambda(f-f^*))^2(X_i)\geq \floor{\lambda}c_{16} Q_\cH(2\tau) \rho(0,t_0) \tau ^2\max\left(\|\xi\|_{L_2}^2, 4\omega_Q^2\right).
    \end{align}
\end{lemma}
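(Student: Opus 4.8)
The plan is to reduce the statement to a purely pointwise convexity inequality, average it over the sample, and then feed in the already-available bound \eqref{eq:quadlower}; no new concentration or covering estimates are needed, since all of the hard probabilistic work (the good-set construction and the lower bound on $\ell''$ via $\rho(0,t_0)$) is encapsulated in the hypothesis ``\eqref{eq:quadlower} is true''. Throughout I would write $v_i\coloneqq (f-f^*)(X_i)$ and $\xi_i=f^*(X_i)-Y_i$, and for a scale $s\ge 0$ let $\widetilde{\xi}_i^{(s)}$ denote the Taylor midpoint attached to the function $f^*+s(f-f^*)$, so that $\widetilde{\xi}_i^{(1)}$ is exactly the midpoint $\widetilde{\xi}_i$ of the decomposition \eqref{eq:decomp} for $f$, while $\widetilde{\xi}_i^{(\lambda)}$ is the one appearing on the left-hand side of the claim. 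The first step is to introduce, for each fixed $i$, the one-variable function $g_i(s)\coloneqq \ell(\xi_i+s v_i)$; it is convex because $\ell$ is globally convex and $s\mapsto \xi_i+s v_i$ is affine, so $g_i''\ge 0$ everywhere.

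The key pointwise estimate I would establish is that for every $i$ and every $\lambda\ge 1$,
\[
\ell''\!\left(\widetilde{\xi}_i^{(\lambda)}\right)(\lambda v_i)^2 \ \ge\ \floor{\lambda}\,\ell''\!\left(\widetilde{\xi}_i^{(1)}\right)v_i^2 .
\]
To prove it I would use Taylor's theorem with Lagrange remainder to write $\Phi_i(s)\coloneqq g_i(s)-g_i(0)-s\,g_i'(0)=\tfrac12\,\ell''(\widetilde{\xi}_i^{(s)})(s v_i)^2$, and then observe $\Phi_i(s)=\int_0^s\big(g_i'(u)-g_i'(0)\big)\,du$, where the integrand is nonnegative and nondecreasing by convexity of $g_i$. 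A staircase bound then yields $\Phi_i(\lambda)\ge \Phi_i(\floor{\lambda})=\sum_{j=0}^{\floor{\lambda}-1}\int_j^{j+1}\big(g_i'(u)-g_i'(0)\big)\,du\ge \floor{\lambda}\int_0^1\big(g_i'(u)-g_i'(0)\big)\,du=\floor{\lambda}\,\Phi_i(1)$, which is precisely the displayed inequality after rewriting $\Phi_i(\lambda)$ and $\Phi_i(1)$ in terms of $\ell''$. The important point is that this holds for \emph{all} coordinates, not merely the good ones, because it uses nothing beyond global convexity; this is exactly what makes the subsequent summation legitimate.

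The second step is to average the pointwise inequality over $i=1,\dots,N$, which gives
\[
\frac{1}{N}\sum_{i=1}^N \ell''\!\left(\widetilde{\xi}_i^{(\lambda)}\right)(\lambda v_i)^2 \ \ge\ \floor{\lambda}\,\frac{1}{N}\sum_{i=1}^N \ell''\!\left(\widetilde{\xi}_i^{(1)}\right)v_i^2 \ \ge\ \floor{\lambda}\,c_{16}\,Q_\cH(2\tau)\,\rho(0,t_0)\,\tau^2\,\|f-f^*\|_{L_2}^2,
\]
where the last inequality is \eqref{eq:quadlower} applied to $f$ (valid because $\|f-f^*\|_{L_2}\ge 2\omega_Q$, and because the $t_0$ and the midpoints $\widetilde{\xi}_i^{(1)}$ match those of Proposition~\ref{prop:theorem4.7equiv} by construction). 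Finally, $\|f-f^*\|_{L_2}\ge \max(\|\xi\|_{L_2},2\omega_Q)$ gives $\|f-f^*\|_{L_2}^2\ge \max(\|\xi\|_{L_2}^2,4\omega_Q^2)$, which turns the right-hand side into the claimed bound.

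I expect the only genuine subtlety, and hence the step to carry out most carefully, to be the bookkeeping of the midpoints across scales: one must make sure that the weight $\ell''(\widetilde{\xi}_i^{(1)})$ produced by the staircase argument is \emph{exactly} the weight weighting $v_i^2$ in \eqref{eq:quadlower}, and that the interval $[0,t_0]$ entering $\rho(0,t_0)$ is the same object in both statements. Phrasing the whole argument through the one-variable convex functions $g_i$ makes this automatic, so I do not anticipate any serious obstacle beyond this notational alignment.
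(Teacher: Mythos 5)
Your proof is correct. Note that the paper never proves this lemma itself — it is imported verbatim from \cite[Lemma 5.6]{mendelson2018learning} — so there is no in-paper argument to compare against; your staircase/superadditivity bound on the second-order Taylor remainder $\Phi_i(s)=g_i(s)-g_i(0)-s\,g_i'(0)$, namely $\Phi_i(\lambda)\geq\Phi_i(\floor{\lambda})\geq\floor{\lambda}\,\Phi_i(1)$ via monotonicity of $g_i'$, is exactly the standard convexity argument behind that cited result, and it combines with \eqref{eq:quadlower} and the hypothesis $\|f-f^*\|_{L_2}\geq\max(\|\xi\|_{L_2},2\omega_Q)$ precisely as you describe. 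The one caveat you already flag — that the weight $\ell''(\widetilde{\xi}_i^{(1)})$ produced by your Lagrange remainder must coincide with the midpoint weighting $(f-f^*)^2(X_i)$ in \eqref{eq:quadlower}, and that $\ell$ be twice differentiable on the relevant intervals — reflects an imprecision of the paper itself (the midpoints in \eqref{eq:decomp} are only ever said to be ``suitably chosen''), not a gap in your argument.
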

\begin{lemma}\label{lm:fhatfstarclose}
    With probability at least $1-\mathscr{P}_{2,c}$ with $\tau_0=c_{16} Q_\cH(2\tau) \rho(0,t_0) \tau ^2/4$, we have 
    \begin{align*}
        \|\hat{f}-f^*\|_{L_2}\leq \max(\|\xi\|_{L_2},2\omega_Q).
    \end{align*}
\end{lemma}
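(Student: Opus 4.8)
The plan is to run a proof by contradiction at the single scale $R:=\max(\|\xi\|_{L_2},2\omega_Q)$, conditioning throughout on the event of Proposition~\ref{prop:theorem4.7equiv} on which the quadratic lower bound \eqref{eq:quadlower} holds, so that the only randomness still to be controlled is the multiplier process. Suppose for contradiction that $\|\hat f-f^*\|_{L_2}\geq R$. By star-shapedness of $\cF-f^*$ and the inclusion \eqref{eq:lambdastar}, I would write $\hat f-f^*=\lambda(f-f^*)$ with $\lambda\geq 1$ and $f\in\cF$ satisfying $\|f-f^*\|_{L_2}=R$. The key gain from this decomposition is that the relevant strong-convexity interval is now pinned to the base scale $R$: the quantity $t_0=c_{11}(2\tau+1/\sqrt{Q_\cH(2\tau)})(\|\xi\|_{L_2}+R)$ no longer grows with $\|\hat f-f^*\|_{L_2}$, which is exactly what lets us later trade $t_0$ for $t_2$.

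First I would lower bound the quadratic part of the decomposition \eqref{eq:decompwithexpec}. Applying Lemma~\ref{lm:5.6equiv} to $\hat f-f^*=\lambda(f-f^*)$ and using $R^2=\max(\|\xi\|_{L_2}^2,4\omega_Q^2)$ gives
\begin{align*}
\frac{1}{16N}\sum_{i=1}^N \ell''(\widetilde{\xi}_i)(\hat f-f^*)^2(X_i)\geq \floor{\lambda}\,c_{16}Q_\cH(2\tau)\rho(0,t_0)\tau^2 R^2.
\end{align*}
Next I would control the multiplier part. By linearity it equals $\lambda$ times the centered average $\tfrac1N\sum_i\big(\ell'(\xi_i)(f-f^*)(X_i)-\expec{\ell'(\xi_i)(f-f^*)(X_i)}{}\big)$ evaluated at the fixed-scale function $f$ with $\|f-f^*\|_{L_2}=R\geq\mathscr{A}(N)$ (valid for large $N$ since $R\geq\|\xi\|_{L_2}$ is a constant while $\mathscr{A}(N)\to0$). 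Invoking \eqref{eq:noiseconcalphacon} (respectively \eqref{eq:noiseconcalphapolycon} under condition (c)-(ii)) at deviation level $\tau_0 R^2$, on an event of probability at least $1-\mathscr{P}_{2,c}$ this centered average is at most $\tau_0 R^2$ in magnitude, so the multiplier part is at least $-\lambda\tau_0 R^2$.

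Combining the two bounds on the intersection of these events yields $P_N L_{\hat f}\geq \floor{\lambda}\,c_{16}Q_\cH(2\tau)\rho(0,t_0)\tau^2 R^2-\lambda\tau_0 R^2$. With the stated choice $\tau_0=c_{16}Q_\cH(2\tau)\rho(0,t_0)\tau^2/4$ and the elementary inequality $\floor{\lambda}\geq\lambda/2$ for $\lambda\geq1$, the right-hand side is at least $\tfrac14\lambda\,c_{16}Q_\cH(2\tau)\rho(0,t_0)\tau^2 R^2>0$, so $P_N L_{\hat f}>0$. This contradicts $P_N L_{\hat f}\leq 0$, which holds because $\hat f$ minimizes the empirical risk and $f^*\in\cF$. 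Hence $\|\hat f-f^*\|_{L_2}<R$, as claimed, and the failure probability is accounted for entirely by $\mathscr{P}_{2,c}$.

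The step I expect to be the main obstacle is the multiplier control, because the representative $f$ on the sphere $\{\|f-f^*\|_{L_2}=R\}$ is selected by the data-dependent minimizer $\hat f$. One must therefore justify applying the \emph{pointwise} deviation supplied by condition (c)—which gives \eqref{eq:noiseconcalphacon}/\eqref{eq:noiseconcalphapolycon} for each fixed $f$—to this relevant representative, exactly as is done at the single scale $R$ in the proof of Proposition~\ref{pro:mainthm2appcon}. What makes the argument go through regardless of how large $\lambda$ (equivalently $\|\hat f-f^*\|_{L_2}$) is, is the mismatch between the $\floor{\lambda}$ factor on the quadratic term and the $\lambda$ factor on the multiplier term, closed by the factor-$4$ choice of $\tau_0$; this renders the whole estimate scale-free and thereby confirms that $\ell$ need only be strongly convex on $[-t_2,t_2]$ rather than on the a priori larger $[-t_0,t_0]$.
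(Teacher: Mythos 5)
Your proposal is correct and follows essentially the same route as the paper's proof: both use the inclusion \eqref{eq:lambdastar} to write any candidate at distance $\geq \max(\|\xi\|_{L_2},2\omega_Q)$ as $\lambda(f-f^*)$ with $\|f-f^*\|_{L_2}$ at the base scale, invoke Lemma~\ref{lm:5.6equiv} for the $\floor{\lambda}$-scaled quadratic lower bound, control the multiplier term via the pointwise concentration \eqref{eq:noiseconcalphacon} at the base scale and scale it linearly by $\lambda$, and close with the factor-$4$ choice of $\tau_0$ and $P_NL_{\hat f}\leq 0$. Your explicit remarks on $\floor{\lambda}\geq\lambda/2$ and on conditioning on the event of Proposition~\ref{prop:theorem4.7equiv} only make explicit what the paper leaves implicit.
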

\begin{proof}
From \eqref{eq:noiseconcalphacon}, with probability at least $1-\mathscr{P}_{2,c}$ we have,
\begin{align*}
    \abs{\frac{1}{N}\sum_{i=1}^Nl'(\xi_i)(f-f^*)(X_i)-\expec{l'(\xi)(f-f^*)}{}}\leq \tau_0\|f-f^*\|_{L_2}^2.
\end{align*}
To make the dependency of $\mathscr{P}_{2,c}$ on $\tau_0$ explicit, we use the notation $\mathscr{P}_{2,c,\tau_0}$ to denote $\mathscr{P}_{2,c}$ for this proof.  
If $\|f-f^*\|_{L_2}\leq \max(\|\xi\|_{L_2},2\omega_Q)$, choosing $\tau_0=c_{16} Q_\cH(2\tau) \rho(0,t_0) \tau ^2/4$, with probability at least $1-\mathscr{P}_{2,c,c_{16} Q_\cH(2\tau) \rho(0,t_0) \tau ^2/4}$, for the same $\lambda\geq 1$ as in Lemma~\ref{lm:5.6equiv}, we have,
\begin{align*}
    \abs{\frac{1}{N}\sum_{i=1}^Nl'(\xi_i)(\lambda(f-f^*))(X_i)-\expec{l'(\xi)(\lambda(f-f^*))}{}}
    \leq \frac{c_{16}\lambda Q_\cH(2\tau) \rho(0,t_0) \tau ^2}{4}\max(\|\xi\|_{L_2},2\omega_Q)^2.
\end{align*}
If $\|f-f^*\|_{L_2}= \max(\|\xi\|_{L_2},2\omega_Q)$, and $\lambda\geq 1$, we also have
\begin{align*}
    &\frac{1}{N}\sum_{i=1}^N \ell''(\widetilde{\xi}_i)(\lambda(f-f^*))^2(X_i)-\abs{\frac{1}{N}\sum_{i=1}^Nl'(\xi_i)(\lambda(f-f^*))(X_i)-\expec{l'(\xi)(\lambda(f-f^*))}{}}\\
    \geq & \floor{\lambda}c_{16} Q_\cH(2\tau) \rho(0,t_0) \tau ^2\max\left(\|\xi\|_{L_2}^2, 4\omega_Q^2\right)-\frac{c_{16}\lambda Q_\cH(2\tau) \rho(0,t_0) \tau ^2}{4}\max(\|\xi\|_{L_2},2\omega_Q)^2\\ >&0.
\end{align*}
So by \eqref{eq:lambdastar}, and Lemma~\ref{lm:5.6equiv}, the empirical minimizer $\hat{f}$ satisfies,
\begin{align*}
        \|\hat{f}-f^*\|_{L_2}\leq \max(\|\xi\|_{L_2},2\omega_Q).
    \end{align*}
\end{proof}
\begin{proof}[Proof of Theorem~\ref{th:mainthm2con}]\label{p:thm2con}
Combining Lemma~\ref{lm:fhatfstarclose} with the two parts of Proposition~\ref{pro:mainthm2appcon} gives us Theorem~\ref{th:mainthm2con}.
\end{proof}
\begin{corollary}\label{cor:mainthm2con}
 For the convex ERM procedure, under Assumptions~\ref{as:datagenprocess}, with condition (b) replaced by Assumption~\ref{as:normequiv} with $p=8$, for some $0<\iota<\frac{1}{2}$ and $r, \mu$ and $\tau_0$ same as in Theorem~\ref{th:mainthm2con}, for sufficiently large $N$, we have
		\begin{align*}
			\|\hat{f}-f^*\|_{L_2}\leq \max\left(N^{-\frac{1}{2}+\iota},2\omega_Q(\cF-\cF,N, Q_\cH(2\tau) ^\frac{3}{2}, Q_\cH(2\tau) )\right) \numberthis\label{eq:errorboundalphaxcor}
		\end{align*}
with probability at least (for some constants $c_9,c_{10},\tilde{C}_2>0$)
 		\begin{align*}
				1-c_9Q_{\cH}(2\tau)^{1-\frac{1}{\eta_1}} N^{\eta_1/(1+\eta_1)}e^{-c_{10}Q_{\cH}(2\tau)^{1+\frac{1}{\eta_1}}N^{\eta_1/(1+\eta_1)}}-\tilde{C}_2N\exp\left(-(N^{2\iota}\tau_0)^\eta/M_1\right).
		\end{align*}
\end{corollary}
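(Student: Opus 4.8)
The plan is to follow the proof of Corollary~\ref{cor:mainthm2} essentially line by line, but with the locally-strongly-convex apparatus of Proposition~\ref{pro:mainthm2appcon} in place of the squared-loss one, and to extract the improved $N^{-1/2+\iota}$ rate entirely from a sharper control of the variance proxy $V$ in \eqref{eq:parameterv}.

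First I would note that Assumption~\ref{as:normequiv} with $p=8$ implies the small-ball condition (b) of Assumption~\ref{as:datagenprocess} (Lemma 4.1 of \cite{pmlr-v35-mendelson14}). Hence the lower bound on the quadratic term of the decomposition \eqref{eq:decompwithexpec}, i.e.\ Proposition~\ref{prop:theorem4.7equiv}, together with the localization Lemma~\ref{lm:fhatfstarclose} that shrinks the window on which $\ell$ must be strongly convex to $[-t_2,t_2]$ with $t_2=\mathcal{O}((\kappa_0+1/\sqrt{Q_\cH(2\tau)})\|\xi\|_{L_2})$, carry over unchanged. This is exactly the source of the first probability term $c_9 Q_\cH(2\tau)^{1-1/\eta_1}N^{\eta_1/(1+\eta_1)}e^{-c_{10}Q_\cH(2\tau)^{1+1/\eta_1}N^{\eta_1/(1+\eta_1)}}$, so no additional work is needed on the lower bound of the first term $T_1$.

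The improvement is confined to the multiplier term $T_2=N^{-1}\sum_i(\ell'(\xi_i)(f-f^*)(X_i)-\expec{\ell'(\xi_i)(f-f^*)(X_i)}{})$, controlled under condition (c)-(i) by Lemma~\ref{lm:rio}. Here I would sharpen the bound on $V$ exactly as in the proof of Corollary~\ref{cor:mainthm2}: Cauchy--Schwarz gives $V\le\sqrt{\expec{\ell'(\xi_1)^4}{}}\,\|f-f^*\|_{L_4}^2+4C\,\expec{\ell'(\xi_1)^8}{}^{1/4}\,\|f-f^*\|_{L_8}^2$, and the $L_8$--$L_2$ norm-equivalence (which also dominates $\|f-f^*\|_{L_4}$) collapses this to $V\le M_2^2\|f-f^*\|_{L_2}^2$ for a constant $M_2$, provided $\expec{\ell'(\xi)^8}{}<\infty$ (automatic for Lipschitz losses such as Huber, and the analog of the $\eta_2>8$ requirement in the squared case). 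Substituting this into \eqref{eq:noiseconcalphacon}, on the range $N\|f-f^*\|_{L_2}^2\ge N\mathscr{A}(N)^2\ge\max(1/M_2^2,1/\tau_0)$ the Bernstein-type (second) term becomes $\exp(-N\tau_0^2\|f-f^*\|_{L_2}^2/(2C_2M_2^2))$ — now linear, not quartic, in $\|f-f^*\|_{L_2}$ — and dominates the Gaussian (third) term, while the sub-Weibull (first) term remains $N\exp(-(N\tau_0\|f-f^*\|_{L_2}^2)^\eta/C_1)$. Choosing $\mathscr{A}(N)=N^{-1/2+\iota}$ with $0<\iota<1/2$ makes both survivors vanish and yields the noise-term probability $\tilde C_2 N\exp(-(N^{2\iota}\tau_0)^\eta/M_1)$. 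Taking $\tau_0<c_{16}Q_\cH(2\tau)\rho(0,t_0)\tau^2/4$ as in Proposition~\ref{pro:mainthm2appcon} and combining the two bounds gives $P_NL_f>0$ whenever $\|f-f^*\|_{L_2}\ge\max(2\omega_Q,N^{-1/2+\iota})$; since $P_NL_{\hat f}\le 0$, the stated estimate on $\|\hat f-f^*\|_{L_2}$ follows.

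The hard part will be the interplay with localization: I must check that the sharpened threshold $N^{-1/2+\iota}$ stays inside the region where Lemma~\ref{lm:fhatfstarclose} has already placed $\hat f$ close enough to $f^*$ that $t_0$ — and hence the strong-convexity level $\rho(0,t_0)$ entering the admissible range of $\tau_0$ — is governed by $\|\xi\|_{L_2}$ rather than by a growing $\|f-f^*\|_{L_2}$; otherwise the constant defining $\tau_0$ would deteriorate with $N$ and the argument would not close. A secondary, purely constant-level point is to verify, with the refined $V$, that the Bernstein term genuinely dominates the Gaussian term on the stated range, and that $\expec{\ell'(\xi)^8}{}$ is finite uniformly over $f\in\cF$.
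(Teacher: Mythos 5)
Your proposal is correct and takes essentially the same route as the paper: the paper's own proof of this corollary is a one-line reference back to Corollary~\ref{cor:mainthm2}, i.e., precisely what you execute — rerun Proposition~\ref{pro:mainthm2appcon} together with the localization Lemma~\ref{lm:fhatfstarclose}, sharpen the variance proxy to $V\leq M_2^2\|f-f^*\|_{L_2}^2$ via Cauchy--Schwarz and the $L_8$--$L_2$ equivalence, observe the Bernstein-type term then dominates, and take $\mathscr{A}(N)=N^{-\frac{1}{2}+\iota}$. The two checks you flag are exactly the points the paper handles implicitly: the localization issue is resolved because Lemma~\ref{lm:fhatfstarclose} places $\hat f$ in a ball of radius $\max(\|\xi\|_{L_2},2\omega_Q)$ so that $t_0$ reduces to $t_2=\mathcal{O}((\kappa_0+1/\sqrt{Q_{\cH}(2\tau)})\|\xi\|_{L_2})$, and the finiteness of $\expec{\ell'(\xi)^8}{}$ is the convex-loss analogue of the moment requirement the paper notes after Corollary~\ref{cor:mainthm2}.
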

\begin{proof}[Proof of Corollary~\ref{cor:mainthm2con}]
The proof is same as Corollary~\ref{cor:mainthm2} and hence we omit it here. 
\end{proof}

\section{Proofs of Section~\ref{sec:ex1}}
\begin{lemma}[Lemma 6.4 of \cite{pmlr-v35-mendelson14}]\label{lm:mend6pt4}
	If $W=(w_i)_{i=1}^d$ is a random vector on $\mathbb{R}^d$, then for every integer $1\leq k\leq d$, 
	\begin{align*}
		\expec{\sup_{t\in\sqrt{k}B_1^d\cap B_2^d}\left\langle W,t\right\rangle}{}\leq 2\expec{\left(\sum_{i=1}^k{w_i^*}^2\right)^\frac{1}{2}}{},
	\end{align*}
	where $(w_i^*)_{i=1}^d$ is a monotone non-increasing reaarangement of $(|w_i|)_{i=1}^d$.
\end{lemma}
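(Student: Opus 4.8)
The plan is to reduce the statement to a deterministic, realization-wise inequality and then integrate, so it suffices to show that for every fixed $w\in\mathbb{R}^d$,
\[
\sup_{t\in\sqrt{k}B_1^d\cap B_2^d}\inner{w,t}\leq 2\left(\sum_{i=1}^k {w_i^*}^2\right)^{\frac{1}{2}},
\]
after which the conclusion follows by monotonicity of expectation. The heart of the argument is a geometric covering statement: writing $U_k=\{u\in\mathbb{R}^d:\norm{u}_0\leq k,\ \norm{u}_2\leq 1\}$ for the set of at-most-$k$-sparse vectors in the Euclidean unit ball, I will show that
\[
\sqrt{k}B_1^d\cap B_2^d\subseteq 2\,\mathrm{conv}(U_k).
\]

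This containment is the main obstacle, and I would establish it by a block-decomposition argument. Fix $t$ with $\norm{t}_1\leq\sqrt{k}$ and $\norm{t}_2\leq 1$, reorder the coordinates of $t$ in nonincreasing order of absolute value, and partition the index set into consecutive blocks $B_1,B_2,\dots$ of size $k$, so that $B_1$ carries the $k$ largest magnitudes, $B_2$ the next $k$, and so on. Let $t_{B_j}$ denote the restriction of $t$ to $B_j$; each $t_{B_j}$ is $k$-sparse. The key estimate is that for $j\geq 2$ every coordinate of $B_j$ is dominated by the average magnitude on $B_{j-1}$, which gives $\norm{t_{B_j}}_2\leq \sqrt{k}\,\max_{i\in B_j}\abs{t_i}\leq \norm{t_{B_{j-1}}}_1/\sqrt{k}$. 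Summing over $j\geq 2$ yields $\sum_{j\geq 2}\norm{t_{B_j}}_2\leq \norm{t}_1/\sqrt{k}\leq 1$, while $\norm{t_{B_1}}_2\leq\norm{t}_2\leq 1$; hence $\sum_j\norm{t_{B_j}}_2\leq 2$. Writing each nonzero block as $t_{B_j}=\norm{t_{B_j}}_2\,u_j$ with $u_j\in U_k$, and noting that $0\in U_k$ so that the weights may be padded to sum to exactly $2$, exhibits $t$ as $2$ times a convex combination of points of $U_k$, which proves the containment.

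With the containment in hand the remaining steps are routine. Since a linear functional attains its supremum over a convex hull at a generating point, I obtain $\sup_{t\in\sqrt{k}B_1^d\cap B_2^d}\inner{w,t}\leq 2\sup_{u\in U_k}\inner{w,u}$. For $u\in U_k$ supported on a set $S$ of size at most $k$, Cauchy--Schwarz gives $\inner{w,u}\leq \norm{w|_S}_2\,\norm{u}_2\leq \norm{w|_S}_2$, and maximizing over supports of cardinality $k$ selects the $k$ largest magnitudes, so that $\sup_{u\in U_k}\inner{w,u}=(\sum_{i=1}^k {w_i^*}^2)^{1/2}$. This establishes the pointwise bound displayed above, and taking expectations over $W$ completes the proof.
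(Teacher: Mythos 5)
Your proof is correct. Note that the paper under review does not actually prove this statement---it is imported verbatim as Lemma 6.4 of \cite{pmlr-v35-mendelson14} and used as a black box---so there is no internal proof to compare against; your argument, which reduces to the deterministic inequality and establishes it via the containment $\sqrt{k}B_1^d\cap B_2^d\subseteq 2\,\mathrm{conv}(U_k)$ (block decomposition by decreasing magnitude, $\ell_2$--$\ell_1$ comparison across consecutive blocks) followed by Cauchy--Schwarz on each $k$-sparse piece, is exactly the standard proof of the cited result, and all steps (including the padding by $0\in U_k$ and the identification $\sup_{u\in U_k}\langle w,u\rangle=(\sum_{i\leq k}(w_i^*)^2)^{1/2}$) check out.
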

\begin{lemma}\label{lm:expecwstarsum}
	Let $w_1,w_2,\cdots,w_d$ are independent copies of a mean-zero, variance 1 random variable $w\sim \textsc{sw}(\eta)$. Then for all $p\geq 1\wedge \eta$, $\|w\|_{L_p}\leq K_1p^\frac{1}{\eta}$ for some constant $K_1>0$. Then for every $1\leq k\leq d$,
	\begin{align*}
		\expec{\left(\sum_{i=1}^k{w_i^*}^2\right)^\frac{1}{2}}{}\leq  \sqrt{2k}K_1\left(\log (ed)\right)^{1/\eta}.
	\end{align*}
\end{lemma}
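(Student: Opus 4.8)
The plan is to first linearize the square root by Jensen's inequality and then control the second moments of the individual order statistics. Since $t\mapsto\sqrt{t}$ is concave,
\begin{align*}
\expec{\left(\sum_{i=1}^k {w_i^*}^2\right)^{1/2}}{}\leq\left(\expec{\sum_{i=1}^k {w_i^*}^2}{}\right)^{1/2}=\left(\sum_{i=1}^k \expec{{w_i^*}^2}{}\right)^{1/2},
\end{align*}
so it suffices to show $\expec{{w_i^*}^2}{}\leq 2K_1^2(\log(ed))^{2/\eta}$ for every $1\leq i\leq k$; summing this over $i\leq k$ and taking the square root then produces the claimed bound $\sqrt{2k}\,K_1(\log(ed))^{1/\eta}$.

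For the per-coordinate estimate I would use the moment method. Because $w_i^*\leq w_1^*=\max_{1\leq j\leq d}|w_j|$ for every $i$, it is enough to bound $\expec{M^2}{}$ with $M=\max_{j}|w_j|$. For any exponent $p\geq 2$, convexity of $x\mapsto x^{p/2}$ (the power-mean inequality) gives $\expec{M^2}{}\leq(\expec{M^p}{})^{2/p}$, and
\begin{align*}
\expec{M^p}{}=\expec{\max_{1\leq j\leq d}|w_j|^p}{}\leq\sum_{j=1}^d\expec{|w_j|^p}{}=d\,\|w\|_{L_p}^p\leq d\,(K_1 p^{1/\eta})^p.
\end{align*}
Hence $\expec{M^2}{}\leq d^{2/p}K_1^2 p^{2/\eta}$, and choosing $p\asymp\log(ed)$ makes $d^{2/p}=\mathcal{O}(1)$, yielding $\expec{M^2}{}\lesssim K_1^2(\log(ed))^{2/\eta}$, which is exactly the required form.

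The main obstacle is not the rate but the exact constant $\sqrt{2}$: the crude bound $w_i^*\leq w_1^*$ is wasteful (the optimization over $p$ produces a leading constant of order $e^2$ rather than $2$, and for small $d$ one must also ensure $p\geq 2$). To recover the sharp constant I would instead control each order statistic directly through its tail, using independence of the $w_j$ and the sub-Weibull tail $\pro(|w|>t)\leq 2\exp(-(t/K_1)^\eta)$ (the equivalent form of the moment hypothesis). A binomial union bound gives
\begin{align*}
\pro(w_i^*>t)\leq\binom{d}{i}\pro(|w|>t)^i\leq\left(\frac{2ed}{i}\right)^i\exp\!\left(-i\,(t/K_1)^\eta\right),
\end{align*}
and integrating $\expec{{w_i^*}^2}{}=\int_0^\infty 2t\,\pro(w_i^*>t)\,dt$ yields $\expec{{w_i^*}^2}{}\lesssim K_1^2(\log(ed/i))^{2/\eta}$. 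Since $\log(ed/i)\leq\log(ed)$ for every $i\leq k$, summing gives $\sum_{i=1}^k\expec{{w_i^*}^2}{}\leq 2kK_1^2(\log(ed))^{2/\eta}$, and the Jensen step above then closes the argument. The only delicate point is tracking the absolute constants through the tail integral so that the final prefactor is precisely $\sqrt{2}$.
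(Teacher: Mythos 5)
Your refined argument is essentially the paper's own proof: the paper likewise bounds $\pro(w_j^*\geq t)\leq\binom{d}{j}\pro(|w|>t)^j$, integrates this tail to obtain $\expec{{w_j^*}^2}{}\leq 2K_1^2\left(\log (ed/j)\right)^{2/\eta}$, and closes with exactly your Jensen step, the only difference being that it expresses $\pro(|w|>t)$ via Markov's inequality with the $L_p$ moment at the optimized level $p=\log(ed/j)$ rather than via the equivalent exponential tail form of the sub-Weibull condition. Your concern about recovering the exact prefactor $\sqrt{2}$ is fair, but it applies equally to the paper, whose displayed tail bound $\left(1/u\right)^{j\log(ed/j)}$ silently drops the binomial factor $\binom{d}{j}\leq e^{j\log(ed/j)}$.
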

\begin{proof}[Proof of Lemma~\ref{lm:expecwstarsum}]
	For $1\leq j\leq d$, and $p\geq 2$,
	\begin{align*}
		\pro(w_j^*\geq t)\leq \binom{d}{j}\pro^j(|w|>t)\leq  \binom{d}{j}\left(\frac{\|z\|_{L_p}}{t}\right)^{jp}.
	\end{align*}
Setting $t=uK_1 \left(\log (ed/j)\right)^{1/\eta}$ and $p=\log(ed/j)$, we get
\begin{align*}
	\pro\left(w_j^*\geq uK_3\right)\leq \left(\frac{1}{u} \right)^{j\log(ed/j)},      \numberthis\label{eq:tailboundsubwei}
\end{align*}
where $K_3=K_1 \left(\log (ed/j)\right)^{1/\eta}$.
Using \eqref{eq:tailboundsubwei} we will bound $\expec{{w_j^*}^2}{}$. For some $v$,
\begin{align*}
	\expec{{w_j^*}^2}{}=&\int_{0}^{\infty}\pro({w_j^*}^2>u)du\\
	=&\int_{0}^{v}\pro({w_j^*}^2>u)du+\int_{v}^{\infty}\pro({w_j^*}^2>u)du\\
	\leq & v+\int_{0}^{\infty}\pro({w_j^*}^2>u+v)du\\
	\leq &v+\int_{0}^{\infty}\left(\frac{K_3}{\sqrt{u+v}} \right)^{j\log(ed/j)}du\\
	=&v-K_5\left[\frac{(u+v)^{1-j\log(ed/j)/2}}{j\log(ed/j)/2-1}\right]_0^\infty \quad \quad [\text{where } K_5=K_3^{j\log(ed/j)}]\\
	=&v+K_5\left[\frac{v^{1-j\log(ed/j)/2}}{j\log(ed/j)/2-1}\right].
\end{align*}
To minimize the upper bound on $\expec{{w_j^*}^2}{}$ we choose $$v=K_5^\frac{2}{j\log(ed/j)}=K_3^2=K_1^2 \left(\log (ed/j)\right)^{2/\eta}.$$ and get 
\begin{align*}
	\expec{{w_j^*}^2}{}\leq2K_1^2\left(\log (ed/j)\right)^{2/\eta}.
\end{align*}
For any $1\leq k\leq d$, using Jensen's inequality,
\begin{align*}
	\expec{\left(\sum_{i=1}^k{w_i^*}^2\right)^\frac{1}{2}}{}\leq \left(\sum_{i=1}^k\expec{{w_i^*}^2}{}\right)^\frac{1}{2}\leq \left(\sum_{i=1}^k2K_1^2\left(\log (ed/i)\right)^{2/\eta}\right)^\frac{1}{2}\leq \sqrt{2k}K_1\left(\log (ed)\right)^{1/\eta}.
\end{align*}
\end{proof}
\begin{proof}[Proof of Proposition~\ref{prop:subweirate}]\label{p:4.1}
In order to provide a bound on $\|\hat f -f^* \|_{L_2}$, we need to compute the order of $\omega_\mu(\cF_R-\cF_R,\tau Q_{\cH_R}(2\tau)/16)$. Based on Lemma~\ref{lm:mend6pt4} and~\ref{lm:expecwstarsum} it is easy to see that, in a similar way to \cite{pmlr-v35-mendelson14},
\[
\expec{\sup_{f\in \cF_R\cap s\cD_{f^*}}\abs{\frac{1}{\sqrt{N}}\sum_{i=1}^N\epsilon_i(f-f^*)(X_i)}}{}\leq 
\begin{cases}
	c_1K_1R\left(\log ed\right)^\frac{1}{\eta} &\quad (R/s)^2> d/4,\\
	c_2K_1s\sqrt{d}&\quad u\leq (R/s)^2\leq d/4,\\
\end{cases}
\]
where $c_1,c_2$ are constants. Hence, following similar steps as in the proof of \cite[Lemma 4.6]{pmlr-v35-mendelson14}, we have
\[
\omega_\mu(\cF_R-\cF_R,\tau Q_{\cH_R}(2\tau)/16)\leq 
\begin{cases}
	\frac{c_3R}{\sqrt{\mu}}\log\left(ed\right)^\frac{1}{\eta} & \text{if $\mu\leq c_1d,$} \\
	0 & \text{if $\mu> c_1d$}. 
\end{cases} \numberthis\label{eq:omegaboundwei}
\]
From \eqref{eq:omegaboundwei}, choosing $r=1-2\iota$ by Theorem~\ref{th:mainthm2}, for sufficiently large $N$, we have with probability at least 
\begin{align*}
	1-\tilde{C}_1\frac{N^{1-2\iota}Q_\cH(2\tau)c^\frac{1}{\eta_1}}{4}\exp(-N^{2\iota\eta_1})-\tilde{C}_2N\exp\left(-\frac{(N^{2\iota}\tau_0)^\eta}{M_1}\right),
\end{align*}
we have
\begin{align*}
	\|\hat{f}-f^*\|_{L_2}\leq \max\left(\frac{2c_3R\log (ed)^\frac{1}{\eta}}{\sqrt{Q_\cH(2\tau)}c^\frac{1}{2\eta_1}}N^{-\frac{1}{2}+\iota},N^{-\frac{1}{2}+\iota}\right).
\end{align*}
\end{proof}
\section{Proofs of Section~\ref{sec:examplepoly}}
\begin{lemma}\label{lm:w1starconc}
	Let $\{X'_i\}_{i=1}^\mu$ be an $\iid$ sample with independent coordinates $X'_{i,j}\sim L(\eta_3,d_j)$ and let $w$ be a random vector with coordinates
	\begin{align*}
		w_j=\frac{1}{\sqrt{\mu}}\sum_{i=1}^\mu X'_{i,j} \qquad j=1,2,\cdots,d. \numberthis\label{eq:wjdefpoly}
	\end{align*}
Then we have
	\begin{align*}
		\mathbb{P}\left(\abs{w_j}\geq t\right)\leq C_3\left(d_j^{\eta_3-2p-1}\mu^{1-\frac{\eta_3}{2}}t^{\eta_3-2p}+d_j^{-2}t^{-p}\right), \numberthis\label{eq:w1starconc}
	\end{align*}
	for some constant $C_3>0$.
\end{lemma}
\begin{proof}[Proof of Lemma~\ref{lm:w1starconc}]
Using the symmetry of the distribution of $w_j$ we can write,
\begin{align}
	\mathbb{P}\left(|w_j|\geq t\right)\leq 2\mathbb{P}\left(w_j\geq t\right).
\end{align}
Setting $p=\eta_3-0.5\iota$, using Theorem 2.1 of \cite{chesneau2007tail} we get for any $t>0$
\begin{align*}
	\mathbb{P}\left(w_j\geq t\right)\leq C_pt^{-p}\max\left(r_{\mu,p}(t),\left(r_{\mu,2}(t)\right)^\frac{p}{2}\right)+\exp\left(-\frac{d_j^2t^2}{16\sigma_{X,2}^2}\right), \numberthis\label{eq:pwjconcintermed}
\end{align*}
where $$C_{1,p}=2^{2p+1}\max\left(p^p,p^{p/2+1}e^p\int_{0}^\infty x^{p/2-1}(1-x)^{-p}dx\right),$$ and for any $k\in\{p,2\}$, $$r_{\mu,k}(t)=\sum_{i=1}^\mu\expec{\abs{\frac{X'_{i,j}}{\sqrt{\mu}}}^k\mathbbm{1}\left(\abs{\frac{X'_{i,j}}{\sqrt{\mu}}}\geq \frac{3\sigma_{X,2}^2}{td_j^2}\right)}{}.$$ 

Now, 
\begin{align*}
	\expec{\abs{\frac{X'_{i,j}}{\sqrt{\mu}}}^p\mathbbm{1}\left(\abs{\frac{X'_{i,j}}{\sqrt{\mu}}}\geq \frac{3\sigma_{X,2}^2}{td_j^2}\right)}{}
	=&\int_{-\infty}^{\infty}\abs{\frac{x}{\sqrt{\mu}}}^p\mathbbm{1}\left(\abs{\frac{x}{\sqrt{\mu}}}\geq \frac{3\sigma_{X,2}^2}{td_j^2}\right)\frac{\eta_3(|x|d_j)^{\eta_3-1}}{2\left(1+(|x|d_j)^{\eta_3}\right)^2}dx\\
	=&\int_{\frac{3\sigma_{X,2}^2\sqrt{\mu}}{td_j^2}}^{\infty}\left(\frac{x}{\sqrt{\mu}}\right)^p\frac{\eta_3(xd_j)^{\eta_3-1}}{\left(1+(xd_j)^{\eta_3}\right)^2}dx\\
	\leq &\frac{\eta_3}{d_j^{2p-\eta_3+1}\mu^\frac{\eta_3}{2}(\eta_3-p)}\left(\frac{3\sigma_{X,2}^2}{t}\right)^{p-\eta_3}\\
	\leq & C_2d_j^{\eta_3-2p-1}\mu^{-\frac{\eta_3}{2}}t^{\eta_3-p}, \numberthis\label{eq:rmupbound}
\end{align*}
where $C_2$ is a constant which depends on $\eta_3$ and $p$. Then 
\begin{align*}
	r_{\mu,p}(t)\leq C_2d_j^{\eta_3-2p-1}\mu^{1-\frac{\eta_3}{2}}t^{\eta_3-p}.
\end{align*}
The term $r_{\mu,2}(t)$ can similarly be bounded as follows:
\begin{align*}
	r_{\mu,2}(t)=\sum_{i=1}^\mu\expec{\abs{\frac{X'_{i,j}}{\sqrt{\mu}}}^2\mathbbm{1}\left(\abs{\frac{X'_{i,j}}{\sqrt{\mu}}}\geq \frac{3\sigma_{X,2}}{td_j^2}\right)}{}\leq \frac{\sigma_{X,2}^2}{d_j^2}. \numberthis\label{eq:rmu2bound}
\end{align*}
Using \eqref{eq:rmupbound}, and \eqref{eq:rmu2bound}, from \eqref{eq:pwjconcintermed} we get 
\begin{align*}
	\mathbb{P}\left(\abs{w_j}\geq t\right)\leq C_3\left(d_j^{\eta_3-2p-1}\mu^{1-\frac{\eta_3}{2}}t^{\eta_3-2p}+d_j^{-2}t^{-p}\right),
\end{align*}
for some constant $C_3>0$.
\end{proof}
 
\begin{lemma}\label{lm:expecwstarsumpoly}
 Let $w_1,w_2,\cdots,w_d$ are independent copies of a random variable such that \eqref{eq:w1starconc} is true for all $j=1,2,\cdots,d$, i.e., 
	\begin{align*}
		\mathbb{P}\left(\abs{w_j}\geq t\right)\leq C_3\left(d^{\eta_3-2p-1}\mu^{1-\frac{\eta_3}{2}}t^{\eta_3-2p}+d^{-2}t^{-p}\right) \quad j=1,2,\cdots,d,
	\end{align*}
	for $\eta_3>2+2\iota$, and $p=\eta_3-0.5\iota$. 	Let $\{w_j^*\}_{j=1}^d$ be the non-increasing arrangement of $\{|w_j|\}_{j=1}^d$. Then for every $1\leq k\leq d$,
	\begin{align*}
		\expec{\left(\sum_{i=1}^k{w_i^*}^2\right)^\frac{1}{2}}{}
		\leq  C_6\sqrt{k}\left(d^{\eta_3/(2p)-1/2+1/p}d_1^{\eta_3/2-p-\eta_3/p+3/2-2/p}\mu^{1/2-\eta_3/4}+d^{1/p}d_1^{-2/p}\right) ,
	\end{align*}
	for some constant $C_6>0$ which depends on $\eta_3$ and $p$.
\end{lemma}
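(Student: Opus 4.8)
The plan is to mirror the proof of Lemma~\ref{lm:expecwstarsum}, replacing the sub-Weibull tail estimate by the two-term polynomial tail supplied by Lemma~\ref{lm:w1starconc}. Since the order statistics satisfy ${w_j^*}^2 \leq {w_1^*}^2$ for every $j$, Jensen's inequality gives $\mathbb{E}[(\sum_{i=1}^k {w_i^*}^2)^{1/2}] \leq (\sum_{i=1}^k \mathbb{E}[{w_i^*}^2])^{1/2} \leq \sqrt{k}\,(\mathbb{E}[{w_1^*}^2])^{1/2}$, so the $\sqrt{k}$ factor in the target appears automatically and it suffices to control the second moment of the maximum $w_1^* = \max_{1\le j\le d}|w_j|$. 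Alternatively one may keep the sharper order-statistic tail bound $\mathbb{P}(w_j^* \geq t) \leq \binom{d}{j}\mathbb{P}(|w|\geq t)^j$ exactly as in Lemma~\ref{lm:expecwstarsum} and bound each $\mathbb{E}[{w_j^*}^2]$ by the $j=1$ estimate; the outcome is the same.

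First I would record the tail of the maximum. Because the coefficients $d_j^{\eta_3-2p-1}$ and $d_j^{-2}$ appearing in Lemma~\ref{lm:w1starconc} are decreasing in $d_j$ (both exponents are negative, since $2p-\eta_3+1>0$), the heaviest per-coordinate tail is the one governed by the smallest scale $d_1$; a union bound over the $d$ coordinates then yields, with $a := 2p-\eta_3 = \eta_3-\iota$,
\begin{align*}
\mathbb{P}(w_1^*\geq t) \leq d\,C_3\Big(d_1^{\eta_3-2p-1}\mu^{1-\frac{\eta_3}{2}}t^{-a} + d_1^{-2}t^{-p}\Big).
\end{align*}
The standing assumption $\eta_3>2+2\iota$ guarantees $a=\eta_3-\iota>2$ and $p=\eta_3-\iota/2>2$, so that both tail contributions are integrable near infinity; this is exactly what the existence of a second moment requires.

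Next I would evaluate $\mathbb{E}[{w_1^*}^2]=\int_0^\infty \mathbb{P}(w_1^*>\sqrt{u})\,du$ by splitting the integral at a single threshold $v$, bounding $\int_0^v \leq v$ and integrating the two tail terms $u^{-a/2}$ and $u^{-p/2}$ over $[v,\infty)$, precisely as in the sub-Weibull argument. The key choice is to take $v$ at the scale dictated by the \emph{second} tail term, namely $v \sim (C_3\,d\,d_1^{-2})^{2/p}$. With this choice the $v$-part and the integral of the $t^{-p}$ tail both contribute at order $v$, producing the second target term $d^{1/p}d_1^{-2/p}$ after a square root; and the integral of the first tail term, $\int_v^\infty d\,C_3 d_1^{\eta_3-2p-1}\mu^{1-\eta_3/2}u^{-a/2}\,du \sim d\,C_3 d_1^{\eta_3-2p-1}\mu^{1-\eta_3/2}\,v^{1-a/2}$, collapses after substituting $v$ into the first target term $d^{\eta_3/(2p)-1/2+1/p}d_1^{\eta_3/2-p-\eta_3/p+3/2-2/p}\mu^{1/2-\eta_3/4}$. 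Assembling these, $(\mathbb{E}[{w_1^*}^2])^{1/2}$ is at most a constant times the sum of the two displayed terms, and multiplying by $\sqrt{k}$ from the Jensen step yields the claim with $C_6$ depending only on $\eta_3$ and $p$.

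I expect the main obstacle to be the exponent bookkeeping in the last step: the threshold $v$ is fixed by the $t^{-p}$ term but must be substituted into the $t^{-a}$ term, so the powers of $d$, $d_1$ and $\mu$ in the first target term are not read off directly but emerge only after simplifying $v^{1-a/2}$ with $a=2p-\eta_3$ and $v\sim (d\,d_1^{-2})^{2/p}$. Verifying that this single, non-optimized threshold simultaneously reproduces both prescribed exponents—rather than optimizing $v$ separately for each tail—is the delicate point, and is what ties the final bound to the exact form stated.
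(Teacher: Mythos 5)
Your proposal matches the paper's own proof essentially step for step: a union bound over the $d$ coordinates (with the heaviest tail governed by $d_1$) to control $\mathbb{P}(w_1^*\geq t)$, a split of $\E[{w_1^*}^2]=\int_0^\infty \mathbb{P}({w_1^*}^2\geq u)\,du$ at the single threshold $v\sim (d\,d_1^{-2})^{2/p}$ dictated by the $t^{-p}$ term, and the Jensen step $\E[(\sum_{i\le k}{w_i^*}^2)^{1/2}]\leq \sqrt{k}\,(\E[{w_1^*}^2])^{1/2}$; your exponent bookkeeping (including $a=2p-\eta_3=\eta_3-\iota>2$ for integrability) reproduces exactly the paper's choice $v=d^{2/p}d_1^{-4/p}$ and the two displayed terms. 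The argument is correct and no gap remains.
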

\begin{proof}[Proof of Lemma~\ref{lm:expecwstarsumpoly}] First note that we have
	\begin{align*}
		\mathbb{P}\left({w_1^*}^2\geq t\right)
		=\mathbb{P}\left({w_1^*}\geq \sqrt{t}\right)
		\leq  \sum_{j=1}^d\mathbb{P}\left(\abs{w_j}\geq \sqrt{t}\right)
		\leq  C_3\left(dd_1^{\eta_3-2p-1}\mu^{1-\eta_3/2}t^{\eta_3/2-p}+dd_1^{-2}t^{-p/2}\right).
	\end{align*}
Now, using \eqref{eq:w1starconc}, for any $v>0$ (to be chosen later), we have
\begin{align*}
	\expec{{w_1^*}^2}{}=&\int_{0}^\infty\mathbb{P}({w_1^*}^2\geq t) dt\\
	\leq &v+ \int_{0}^\infty\mathbb{P}({w_1^*}^2\geq t+v) dt\\
	\leq & v+\int_{0}^\infty C_3\left(dd_1^{\eta_3-2p-1}\mu^{1-\eta_3/2}(t+v)^{\eta_3/2-p}+dd_1^{-2}(t+v)^{-p/2}\right) dt\\
	\leq & v+C_4\left(dd_1^{\eta_3-2p-1}\mu^{1-\eta_3/2}v^{\eta_3/2-p+1}+dd_1^{-2}v^{1-p/2}\right), 
\end{align*}
where $C_4=C_3\max(1/(p-1-\eta_3/2),1/(p/2-1))$. Choosing $v=d^{2/p}d_1^{-4/p}$, we get
\begin{align*}
	\expec{{w_1^*}^2}{}\leq C_5\left(d^{\eta_3/p-1+2/p}d_1^{\eta_3-2p-2\eta_3/p+3-4/p}\mu^{1-\eta_3/2}+d^{2/p}d_1^{-4/p}\right) ,
\end{align*}
where $C_5=C_4+1$.
Using Jensen's inequality, we have
\begin{align*}
	\expec{\left(\sum_{j=1}^k {w_j^*}^2\right)^\frac{1}{2}}{}&\leq \left(\sum_{j=1}^k \expec{{w_j^*}^2}{}\right)^\frac{1}{2}\leq \left(k \expec{{w_1^*}^2}{}\right)^\frac{1}{2}\\
	&\leq  C_6\sqrt{k}\left(d^{\eta_3/(2p)-1/2+1/p}d_1^{\eta_3/2-p-\eta_3/p+3/2-2/p}\mu^{1/2-\eta_3/4}+d^{1/p}d_1^{-2/p}\right) ,
\end{align*}
where $C_6=\sqrt{C_5}$, thereby completing the proof.
\end{proof}
\vspace{0.21in}

\begin{proof}[Proof of Proposition~\ref{prop:paretorate}]\label{p:prop4.2}
We start by obtaining a bound on the term  $\omega_\mu(\cF_R-\cF_R,\tau Q_{\cH_R}(2\tau)/16)$. Let $w$ be a random vector with coordinates
	\begin{align*}
		w_j=\frac{1}{\sqrt{\mu}}\sum_{i=1}^\mu X'_{i,j} \qquad j=1,2,\cdots,d. \numberthis\label{eq:wjdefpoly}
	\end{align*}
Let $\{w_j^*\}_{j=1}^d$ be the non-increasing arrangement of $\{|w_j|\}_{j=1}^d$. Then, we have
	\begin{align*}
		&\expec{\sup_{f\in\cF_R\cap s\mathcal{D}_{f^*}}\abs{\frac{1}{\sqrt{\mu}}\sum_{i=1}^\mu\epsilon_i(f-f^*)(X'_i)}}{}\leq \expec{\sup_{t\in B_1^d(2R)\cap B_2^d(s)}\left\langle\frac{1}{\sqrt{\mu}}\sum_{i=1}^\mu X'_i,t\right\rangle}{}\\
		=&\expec{\sup_{t\in B_1^d(2R)\cap B_2^d(s)}\left\langle w,t\right\rangle}{}
		=s\expec{\sup_{t\in B_1^d(2R/s)\cap B_2^d(1)}w^\top t}{}
		\leq 2s\expec{\left(\sum_{j=1}^{(2R/s)^2} {w_j^*}^2\right)^\frac{1}{2}}{}.
	\end{align*}

If $(2R/s)^2<d$, using Lemma~\ref{lm:expecwstarsumpoly} we get
\begin{align*}
	\expec{\sup_{t\in B_1^d(2R)\cap B_2^d(s)}w^\top t}{}\leq& 4C_6R\left(d^{\eta_3/(2p)-1/2+1/p}d_1^{\eta_3/2-p-\eta_3/p+3/2-2/p}\mu^{1/2-\eta_3/4}+d^{1/p}d_1^{-2/p}\right) \\ \leq& C_7Rd^{1/p+\iota/8},
\end{align*}
when $d_1\geq C_6'$ for some constants $C_6',C_7>0$.\\

If $(2R/s)^2\geq d$,
\begin{align*}
	\expec{\sup_{t\in B_1^d(2R)\cap B_2^d(s)}w^\top t}{}\leq 2s\sigma_{X,2}\sqrt{d}. 
\end{align*}
So when $(2R/s)^2\geq d$, 
\begin{align*}
	\expec{\sup_{f\in\cF_R\cap s\mathcal{D}_{f^*}}\abs{\frac{1}{\mu}\sum_{i=1}^\mu\epsilon_i(f-f^*)(X'_i)}}{}\leq \gamma s,
\end{align*} for all $s>0$. When $\mu\leq C_8d^{1+2/p+\iota/4}$, we have $(2R/s)\leq \sqrt{d}$ for
\begin{align*}
	s\geq \frac{C_7Rd^{1/p+\iota/8}}{\gamma\sqrt{\mu}}.
\end{align*}
When $\mu> C_8d^{1+2/p+\iota/4}$, we have $(2R/s)\geq \sqrt{d}$ for
\begin{align*}
	s\leq \frac{C_7Rd^{1/p+\iota/8}}{\gamma\sqrt{\mu}}.
\end{align*}
Combining the above facts, and choosing $\mu=\frac{N^rQ_\cH(2\tau)c^\frac{1}{\eta_1}}{4}$, and $r=1-2\iota$ we get
\begin{align*}
\omega_\mu(\cF_R-\cF_R,\tau Q_{\cH_R}(2\tau)/16)\leq 
\begin{cases}
	\frac{C_9R}{\tau Q_{\cH_R}(2\tau)^{3/2}}d^{1/p+\iota/8}N^{-1/2+\iota} & \text{if $\mu\leq C_8d^{1+2/p}$}, \\
	0 & \text{if $\mu> C_8d^{1+2/p}$},
\end{cases} 
\end{align*}
where $C_9=32C_7/c^{1/(2\eta_1)}$. Then using part 2 of Theorem~\ref{th:mainthm2}, we get, 
\begin{align*}
	\|\hat{f}-f^*\|_{L_2} \leq \max\left\{N^{-\frac{1}{4}\left(1-\frac{1}{\eta_2}\right)+\iota},\frac{C_9R}{\tau Q_{\cH_R}(2\tau)^{3/2}}d^{1/p+\iota/8}N^{-1/2+\iota}\right\} ,
\end{align*}
with probability given by at least \eqref{eq:thm10probpoly}.
\end{proof}
\section{Proofs of Section~\ref{sec:exhuber}}
\begin{proof}[Proof of Proposition~\ref{prop:huberrate}]\label{p:huber}
Since we assumed $X$ to be Gaussian, $\cF_R$ is a $L_g$-subGaussian function class for some constant $L_g>0$. So as shown in Section 6.5.2, we have,
\begin{align*}
    \omega_Q(\cF-\cF,N,\zeta_1,\zeta_2)\leq 
    \begin{cases}
        \frac{c_3(L_g)R}{\sqrt{N}}\sqrt{\log(ed/N)} & \text{if} N\leq c_1(L_g)d,\\
        \frac{c_4(L_g)R}{\sqrt{d}} & \text{if} c_1(L_g)d< N\leq c_2(L_g)d,\\
        0 & \text{if} N> c_2(L_g)d,\\
    \end{cases}
\end{align*}
where $c_i(L_g), i=1,2,3,4$ are constants dependent on only $L_g$. Then using Corollary~\ref{cor:mainthm2con}, we have
	\begin{align*}
			\|\hat{f}-f^*\|_{L_2}\leq \max\left(N^{-\frac{1}{2}+\iota},2\frac{c_3(L_g)R}{\sqrt{N}}\sqrt{\log(ed/N)}\right),
		\end{align*}
with probability at least (for some constants $c_9,c_{10},\tilde{C}_2>0$)
 		\begin{align*}
				1-c_9\epsilon^{1-\frac{1}{\eta_1}} N^{\eta_1/(1+\eta_1)}e^{-c_{10}\epsilon^{1+\frac{1}{\eta_1}}N^{\eta_1/(1+\eta_1)}}-\tilde{C}_2N\exp\left(-(N^{2\iota}\tau_0)^\eta/M_1\right).
		\end{align*}
\end{proof}
\section{A Note on Condition (c) in Assumption~\ref{as:datagenprocess} and $\alpha_N^*(\gamma,\delta)$ in~\cite{pmlr-v35-mendelson14}}\label{sec:reltoempiricalprocess}
In this section, we discuss the relationship between Condition (c)-(i) of our Assumption~\ref{as:datagenprocess} and the multiplier process based assumption in~\cite[Equation 2.2 and $\alpha_N^*(\gamma,\delta)$]{pmlr-v35-mendelson14}. For simplicity, we consider the following simple model. Let $\{X_i\}_{i\in\mathbb{Z^+}}$ is an $\iid$ sequence of symmetric, zero-mean, random vectors. Let $\{Y_i\}_{i\in\mathbb{Z^+}}$, $Y_i\in\mathbb{R}$ denote the sequence given by $Y_i={\theta^*}^\top X_i+\xi_i$, where $\theta^*\in B_1^1(R)$ and $\{\xi_i\}_{i=1}^N$ is an $\iid$ sequence and independent of $X_i$, $\forall i$, and $\xi_i\sim N(0,\sigma_1^2)$. The function class $\mathcal{F}$ we consider is $\mathcal{F}\coloneqq \cF_R=\left\lbrace\langle\theta,\cdot\rangle:\theta\in B_1^d(R)\right\rbrace$. Now let us assume $\frac{1}{N}\sum_{i=1}^NX_i\xi_i$ is heavy-tailed random vector, with the tail lower bounded by $N\exp(-M(Nt))$, for some positive increasing function of $t$, $M(t)$, i.e., $\mathbb{P}\left(\abs{N^{-1}\sum_{i=1}^NX_i\xi_i}>t\right)\geq M_3N\exp(-M(Nt))$
for some $M_3>0$. Specifically setting $M(t)=t^\eta,\eta>0$, and $M(t)=\eta_2\log t, \eta_2>2$ one recovers \eqref{eq:rio} and \eqref{eq:heavytailconc}. Now, recall from~\cite{pmlr-v35-mendelson14} that,
\begin{align}
	\alpha_N^*(\gamma,\delta):=\inf\left\lbrace s>0:\pro\left(	\sup_{\theta\in B_1^1(2R)\cap B_2^1(s)}\abs{\frac{1}{N}\sum_{i=1}^N\xi_iX_i\theta}\leq\gamma s^2\right)\geq 1-\delta\right\rbrace. \numberthis\label{eq:alphadef}
\end{align}
Note that, for $s>0$, $$\sup_{\theta\in B_1^1 (2R)\cap B_2^1(s)}\abs{N^{-1}\sum_{i=1}^N\xi_iX_i\theta}= \abs{N^{-1}\sum_{i=1}^N\xi_iX_i}\min(2R,s).$$
We also have,
$$\mathbb{P}\left(\abs{\frac{1}{N}\sum_{i=1}^N\xi_iX_i}\leq\gamma s^2/\min(2R,s)\right)\leq 1-M_3N\exp\left(-M\left(\gamma N s^2/\min(2R,s)\right)\right).$$
Then, from \eqref{eq:alphadef}, when $s=\alpha_N^*(\gamma,\delta)$, 
\begin{align*}
    \delta\geq NM_3\exp\left(-M\left(\gamma N {\alpha_N^*(\gamma,\delta)}^2/\min(2R,\alpha_N^*(\gamma,\delta))\right)\right).
\end{align*}
Hence, if we want a non-trivial bound on the generalization error, we need ${\alpha_N^*(\gamma,\delta)}^2\leq N^{-m_0}$ for some $m_0>0$. Set $2R>N^{-m_0/2}$. When $M(t)\sim t^{\gamma_2}, \gamma_2>0$, $\frac{1}{N}\sum_{i=1}^N\xi_iX_i$ has a sub-weibull tail. If it has a polynomially decaying tail, i.e., $M(t)=M_4\log t$ for some constant $M_4>0$, then
$$\delta\geq NM_3\exp\left(-M_4\log \left(\gamma N^{1-m_0/2}\right)\right) =M_3\gamma^{-M_4}N^{1-(1-\frac{m_0}{2})M_4}.$$
This implies that if $\frac{1}{N}\sum_{i=1}^N\xi_iX_i$ has a polynomially decaying tail, one gets a polynomial probability statement on the rate using complexity measure $\alpha_N^*(\gamma,\delta)$. Note that, since we are considering $\iid$ setting, choosing $m_0<1$ would allow $\alpha_N^*(\gamma,\delta)$ to be of the order of $N^{-1/2+\iota}$ where $\iota>0$ is a small number. Recall that the rates we obtain in Theorem~\ref{th:mainthm2}, and \ref{th:mainthm2con} are for $\beta$-mixing case. Indeed the worse rates are due to the presence of the third terms on the RHS of \eqref{eq:rio}, and \eqref{eq:heavytailconc} -- one needs to choose $\mathcal{A}(N)$ (used in the proofs of Theorem~\ref{th:mainthm2},and \ref{th:mainthm2con}) suitably so that the third terms on the RHS of \eqref{eq:rio}, and \eqref{eq:heavytailconc} decay to $0$ as $N\to\infty$.

\end{document}